\documentclass{amsart}
\usepackage[hmargin=3.5cm,vmargin=3.5cm]{geometry}
\usepackage[utf8]{inputenc}

\usepackage{amsmath, 
	,amssymb
	,amsthm
	,stmaryrd
	, mathtools, mathrsfs
}
\usepackage[linktocpage=true, backref = page, pdfusetitle]{hyperref}

\usepackage{graphicx,svg,xcolor}
\usepackage{enumitem,blindtext}
\usepackage{todonotes, lipsum}
\usepackage[all,cmtip]{xy}

\makeatletter
\g@addto@macro\bfseries{\boldmath}
\makeatother

\newtheorem{theorem}{Theorem}[section]
\newtheorem{lemma}[theorem]{Lemma}
\newtheorem{korollar}[theorem]{Corollary}

\newtheorem{proposition}[theorem]{Proposition}
\theoremstyle{definition}
\newtheorem{example}[theorem]{Example}
\newtheorem{remark}[theorem]{Remark}
\newtheorem{definition}[theorem]{Definition}

\newtheorem*{theorem*}{Theorem}
\newtheorem*{corollary*}{Corollary}
\newtheorem*{fact}{Fact}

\newcommand{\G}{\ensuremath{\Gamma}}
\newcommand{\g}{\ensuremath{\gamma}}

\newcommand{\C}{\ensuremath{\mathbb{C}}}

\newcommand{\eps}{\ensuremath{\varepsilon}}

\newcommand{\N}{\ensuremath{\mathbb{N}}}

\newcommand{\cf}{\ensuremath{\mathbf{c}}}
\newcommand{\ov}{\ensuremath{\overline}}
\newcommand{\mf}{\ensuremath{\mathfrak}}
\newcommand{\mc}{\ensuremath{\mathcal}}

\newcommand{\R}{\ensuremath{\mathbb{R}}}
\newcommand{\inv}{\ensuremath{^{-1}}}

	\renewcommand{\Re}{\ensuremath{\operatorname{Re}}}

	\newcommand{\on}{\ensuremath{\operatorname}}
	
	\DeclareMathOperator{\ad}{ad}

	\DeclareMathOperator{\HC}{HC}

	\DeclareMathOperator{\dom}{dom}

	\makeatletter
	\newcommand*\bigcdot{\mathpalette\bigcdot@{.5}}
	\newcommand*\bigcdot@[2]{\mathbin{\vcenter{\hbox{\scalebox{#2}{$\m@th#1\bullet$}}}}}
	\makeatother

	\setlength{\parindent}{0em}
	\setlength{\parskip}{\medskipamount}

	\author{Tobias Weich and Lasse L. Wolf}
	\title[Temperedness of locally symmetric spaces: The product case]{Temperedness of locally symmetric spaces:\\ The product case}
	\email{weich@math.upb.de, llwolf@math.upb.de}
	\begin{document}
	\begin{abstract}
		Let $X=X_1\times X_2$ be a product of two rank one symmetric spaces of non-compact type and $\G$ a torsion-free discrete subgroup in $G_1\times G_2$.
		We show that the spectrum of $\G\backslash X$ is related to the asymptotic growth of $\G$ in the two direction defined by the two factors.
		We obtain that $L^2(\G\backslash G)$ is tempered for large class of $\G$.
	\end{abstract}

	\maketitle
	\section{Introduction}
	If one considers a geometrically finite hyperbolic surface $M = \Gamma \backslash \mathbb H$ it is a very classical theorem that the smallest eigenvalue of the Laplace-Beltrami operator $\Delta$ is related to the growth rate of $\Gamma$. 
	More precisely,
	\[ 
		\inf \sigma(\Delta) =\begin{cases}
			1/4&\colon\delta_\G<1/2\\
			1/4-(\delta_\G-1/2)^ 2&\colon \delta_\G\geq1/2,
		\end{cases} 
	\]
where $\delta_\Gamma$ is the critical exponent of the discrete subgroup $\Gamma\subseteq SL_2(\R)$
\[
	\delta_\G \coloneqq \inf\left\{s\in \R \colon \sum_{\g\in\G} e^{-s d(\gamma x_0, x_0)}<\infty \right\}, \quad x_0\in \mathbb{H}.
\]
This theorem is due to Elstrodt \cite{MR360472,MR360473,MR360474} and Patterson \cite{pattersonlimitset}.

A decade later it has been extended to real hyperbolic manifolds of arbitrary dimension by Sullivan \cite{Sul87} and then to general locally symmetric spaces of rank one by Corlette \cite{Cor90}.

We are interested in analog statements for higher rank locally symmetric spaces.
To state the theorems let us shortly introduce the setting (see Section~\ref{sec:setting}).
Let $X$ be a symmetric space of non-compact type, 
i.e. $X=G/K$ where $G$ is a real connected semisimple non-compact Lie group with finite center and $K$ is a maximal compact subgroup.
$G$ admits a Cartan decomposition $G=K \exp(\overline{\mf a_+}) K$.
Hence for every $g\in G$ there is $\mu_+(g)\in \overline{\mf a_+}$ such that $g\in K\exp(\mu_+(g))K$.
$\mu(g)$ can be thought of a higher dimensional distance $d(gK,eK)$.

In this setting the bottom of the spectrum of the Laplace-Beltrami operator $\Delta$ can be estimated using $\delta_\Gamma$ as well \cite{Web08, Leu04}.
Note that in the definition of $\delta_\Gamma$ the term $d(\gamma K, eK)$ is $\|\mu_+(\gamma)\|$.
Hence, one only considers the norm of $\mu_+(\gamma)$ but there are different ways to measure the growth rate of $\gamma$ or $\mu_+(\gamma)$.
This is exploited by Anker and Zhang \cite{ankerzhang} to determine $\inf\sigma(\Delta)$ to an exact value.

However, the spectral theory of $\Gamma\backslash G/K$ is more involved than in the rank one case and is not completely determined by $\Delta$:
There is a whole algebra of natural differential operators on $\Gamma\backslash G/K$ that come from the algebra of $G$-invariant differential operators $\mathbb{D}(G/K)$ on $G/K$.
In the easiest higher rank example $G/K=(G_1\times G_2)/(K_1\times K_2)=(G_1/K_1)\times (G_2/K_2)$ of two rank one symmetric spaces this algebra is generated by the two Laplacians acting on the respective factors.
In this case we could just consider the Laplace operators on the two factors $G_1/K_1$ and $G_2/K_2$ which generate $\mathbb{D}((G_1\times G_2)/(K_1\times K_2))$.
However, in general there are no canonical generators for $\mathbb{D}(G/K)$.
This is the reason why in the higher rank setting it is more natural to work with the whole algebra instead of a generating set.

The importance of this algebra can be seen by considering the representation $L^2(\Gamma\backslash G)$ where $G$ acts by right translation.
In the rank one case (where $\mathbb{D}(G/K) =\C[\Delta]$) $L^2(\Gamma\backslash G)$ is tempered (see Definition~\ref{def:tempered}) if $\sigma(\Delta) \subseteq [\|\rho\|^2,\infty[$.
In the higher rank case this is not true anymore but an analogous statement can be formulated in terms of $\mathbb{D}(G/K)$ (see Proposition~\ref{prop:temperednessfromspectrum}).
This requires to define a joint spectrum $\widetilde \sigma(\Gamma\backslash G/K)$ for $\mathbb{D}(G/K)$ on $L^2(\Gamma\backslash G/K)$.
There are different ways to define this spectrum:
On the one hand we can use the representation theoretical decomposition of $L^2(\Gamma\backslash G)$ and consider the support of the corresponding measure (see Section~\ref{sec:planchrelspectrum}).
On the other hand we can define a joint spectrum for a finite generating set of $\mathbb{D}(G/K)$ using approximate eigenvectors (see Section~\ref{ssub:Schmüdgen's joint spectrum}). 
This definition is more in the spirit of usual spectral theory.
In fact both definitions coincide and it holds:
\begin{equation}
	\label{eq:defspectrum}
	\widetilde \sigma(\Gamma\backslash G/K)=\{\lambda\in \mf a_\C^\ast\mid \chi_\lambda(D)\in \sigma(_\Gamma D) \quad \forall D\in \mathbb{D}(G/K)\}
\end{equation}
where $\chi_\lambda$ are the characters of $\mathbb{D}(G/K)$ parametrized by $\lambda\in \mf a_\C^\ast$ (see Proposition~\ref{prop:equivalencespectra}).

As a first result we prove that
\begin{equation}
	\label{eq:spectrumcontainsunitary}
	i\mf a^\ast\subseteq \widetilde \sigma(\Gamma\backslash G/K)
\end{equation}
if $\Gamma\backslash G/K$ has infinite injectivity radius (see Proposition~\ref{prop:spectrumcontainsimaginary}).

The above mentioned connection between this spectrum and temperedness of $L^2(\Gamma\backslash G)$ is given by the following fact.
\begin{fact}[Proposition~\ref{prop:temperednessfromspectrum}]
	If $\widetilde \sigma(\Gamma\backslash G/K)\subseteq i\mf a^\ast$ then $L^2(\Gamma \backslash G)$ is tempered.
\end{fact}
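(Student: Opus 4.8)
The plan is to verify temperedness through the Cowling--Haagerup--Howe criterion: a unitary representation of $G$ is tempered as soon as some dense subspace of vectors $f$ has matrix coefficients $g\mapsto \langle R(g)f,f\rangle$ in $L^{2+\eps}(G)$ for every $\eps>0$, where $R$ denotes the right regular representation on $L^2(\G\backslash G)$; in the semisimple setting this is implied by the pointwise bound $|\langle R(g)f,f\rangle|\le C_f\,(1+\|\mu_+(g)\|)^{N}\,\Xi(g)$ with $\Xi$ Harish-Chandra's spherical function, since $\Xi$ together with any polynomial factor lies in $L^{2+\eps}(G)$. One may take the dense subspace to be $K$-finite, so I would fix a $K$-type $\tau$ and a unit vector $f$ in the $\tau$-isotypic part of $L^2(\G\backslash G)$ and aim to prove exactly this $\Xi$-domination for $c_f(g):=\langle R(g)f,f\rangle$.

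The device for feeding in $\widetilde\sigma$ --- which only sees $K$-fixed data --- is to build from $f$ a $K$-bi-invariant object. Set $\varphi(g):=\int_{K\times K}|c_f(k_1gk_2)|^2\,dk_1\,dk_2$. Writing $W:=\int_K R(k)f\otimes\overline{R(k)f}\,dk$, which is a $\Delta K$-invariant vector in $L^2(\G\backslash G)\,\hat\otimes\,\overline{L^2(\G\backslash G)}=L^2((\G\times\G)\backslash(G\times G))$, one checks that $\varphi(g)=\langle (R\otimes\overline R)(g,g)W,W\rangle$; thus $\varphi$ is a $K$-bi-invariant positive-definite function and a diagonal matrix coefficient of the restriction of $R\otimes\overline R$ to the diagonal $G\hookrightarrow G\times G$. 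Unfolding this diagonal restriction along $(G\times G)/\Delta G\cong G$ exhibits it, as a $G$-representation, as a direct integral over $\G\backslash G/\G$ of the quasi-regular representations $L^2(\G_g\backslash G)$ with $\G_g=\G\cap g\G g\inv$. The step carrying the real weight is now to show that the $K$-spherical spectrum of this unfolded representation is still contained in $\widetilde\sigma(\G\backslash X)$, hence in $i\mf a^\ast$; granting it, $\varphi$ is, uniformly on compacta, a limit of convex combinations of spherical functions $\varphi_\lambda$ with $\lambda\in i\mf a^\ast$, which satisfy $|\varphi_\lambda|\le\Xi$ uniformly, so that $\varphi(g)\le C(1+\|\mu_+(g)\|)^{N}\Xi(g)^2$. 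In the product situation $X=X_1\times X_2$ this weight-bearing step is organised factor by factor: the diagonal restriction, the convolution algebra, and $\mathbb D(X)=\mathbb D(X_1)\otimes\mathbb D(X_2)$ all split over the two rank-one factors, and on each factor one can use the rank-one picture --- where the non-tempered spherical parameters are precisely those forbidden by $\widetilde\sigma\subseteq i\mf a^\ast$ --- iterating the passage to $R\otimes\overline R$ finitely often to reach spherical parameters near the tempered boundary.

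To finish, note that since $c_f$ has fixed $K$-type its absolute value on a double coset $K\exp(\mu_+(g))K$ is comparable, up to a constant depending only on $\tau$, to $\big(\int_{K\times K}|c_f(k_1gk_2)|^2\,dk_1\,dk_2\big)^{1/2}=\varphi(g)^{1/2}$; combined with the bound on $\varphi$ this gives $|c_f(g)|\le C'(1+\|\mu_+(g)\|)^{N'}\Xi(g)$, whence $c_f\in L^{2+\eps}(G)$ for every $\eps>0$. As $\tau$ and $f$ were arbitrary and such vectors are dense, Cowling--Haagerup--Howe yields that $L^2(\G\backslash G)$ is tempered. The main obstacle is, as flagged, the middle step: identifying the $K$-spherical spectrum of the unfolded diagonal restriction of $R\otimes\overline R$ (and of its higher iterates) with a subset of $\widetilde\sigma(\G\backslash X)$ --- equivalently, showing that a non-tempered representation occurring in $L^2(\G\backslash G)$ necessarily forces a non-tempered point into the spherical spectrum $\widetilde\sigma$. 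Everything else is bookkeeping with Harish-Chandra's estimates for $K$-finite matrix coefficients.
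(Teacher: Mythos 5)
There is a genuine gap, and it sits exactly where you flag it: the passage from $K$-fixed data to general vectors via $R\otimes\overline R$. Two problems arise. First, the Mackey-type ``unfolding'' of the diagonal restriction of $L^2((\G\times\G)\backslash(G\times G))$ into a direct integral over $\G\backslash G/\G$ of the representations $L^2(\G_g\backslash G)$, $\G_g=\G\cap g\G g\inv$, is not available in this generality: for the discrete, typically Zariski-dense, infinite-covolume $\G$ considered here the $\G$-action on $G/\G$ usually has dense orbits, the double coset space is not standard, and the restriction--induction formula in this pointwise form fails. Second, and more fundamentally, even granting some such decomposition, the hypothesis $\widetilde\sigma(\G\backslash G/K)\subseteq i\mf a^\ast$ controls only the spherical spectrum of $L^2(\G\backslash G)$ itself; it says nothing a priori about the spherical parameters occurring in $L^2(\G_g\backslash G)$ for the conjugate-intersection subgroups, nor about those of the ``higher iterates'' you invoke. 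That containment is precisely the step carrying all the weight, you leave it unproved, and proving it would be at least as hard as the statement itself. The surrounding material (CHH-type criterion, $|\phi_\lambda|\le\Xi$ for $\lambda\in i\mf a^\ast$, comparability of sup and $L^2(K\times K)$ norms on a fixed $K$-type) is fine but does not close this hole.

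The reduction you are trying to engineer can in fact be done in one line, which is how the paper proceeds: for non-negative $f_1,f_2\in C_c(\G\backslash G)$ one has $|\langle R(g)f_1,f_2\rangle|\le\langle R(g)F_1,F_2\rangle$ with $F_i(\G h)=\max_{k\in K}|f_i(\G hk)|$, and the $F_i$ are right-$K$-invariant. So it suffices to treat $K$-invariant vectors, for which the direct integral $L^2(\G\backslash G/K)=\int_{Z_{\on{sph}}}^\oplus\mc H_z^K\,d\mu(z)$ gives $\langle\pi_z(g)f_{1,z},f_{2,z}\rangle=\phi_{\lambda_z}(g)\langle f_{1,z},f_{2,z}\rangle$ with $\lambda_z\in\widetilde\sigma(\G\backslash G/K)$ for a.e.\ $z$; the hypothesis then yields a bound $\int_G|\phi_{\lambda_z}|^{2+\eps}\,dg\le C_\eps$ uniform in $z$, and H\"older's inequality gives $\int_G|\langle R(g)f_1,f_2\rangle|^{2+\eps}\,dg<\infty$, i.e.\ strong $L^{2+\eps}$ on a dense subspace and hence temperedness (the same argument gives strong $L^{p+\eps}$ under the weaker hypothesis $\widetilde\sigma\subseteq\frac{p-2}{p}\on{conv}(W\rho)$). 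No tensor-square, no unfolding, and no estimate beyond integrability of the spherical functions is needed. If you want to salvage your route, the missing lemma you would have to prove is that a non-tempered constituent of $L^2(\G\backslash G)$ forces a non-tempered point into the $K$-spherical spectrum; the sup-over-$K$ domination above is exactly the elementary substitute for that claim.
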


Until recently, it was completely unknown which conditions on $\Gamma$ (similar to $\delta_\Gamma\leq \|\rho\|$) imply temperedness of $L^2(\Gamma\backslash G)$ even for the example of $G=G_1\times G_2$ with $G_i$ of rank one.
Then Edwards and Oh \cite{EdwardsOh22} showed temperedness for Anosov subgroups if the growth indicator function $\psi_\Gamma$ is bounded by $\rho$ (see Section~\ref{sec:growthindicator} for the definition). 
This statement is in the same spirit as the original theorems by Patterson, Sullivan, and Corlette,
but it only holds for Anosov subgroups for minimal parabolics which are a higher rank analog of convex cocompact subgroups and its proof uses rather different methods including estimates on mixing rates from \cite{EdwOhMixing}.
The main example where they verify the condition $\psi_\Gamma\leq \rho$ is precisely the product situation $G=G_1\times G_2$ with $G_i$ of rank one and $\Gamma$ is an Anosov subgroup.

In this work we present a different proof for the temperedness of $L^2(\Gamma \backslash (G_1\times G_2))$ that is closer to the original proofs in the rank one case and does not use any mixing results.
Moreover, we need not to assume that $\Gamma$ is Anosov.
\begin{theorem*}[Theorem~\ref{prop:spectrumlaplaceonefactor}]
	Let $G_1$ and $G_2$ be of rank one and $\Gamma\leq G_1\times G_2$ discrete and torsion-free.
	Let 
	\[
		\delta_1=\sup_{R>0} \inf\left\{s\in\R \colon \sum_{\g\in\G, \|\mu_+(\g_2)\|\leq R} e^{-s \|\mu_+(\g_1)\|} <\infty\right\}
	\]
	and define $\delta_2$ in the same way.
Then
\[
		\widetilde \sigma(\G \backslash (G_1\times G_2)/(K_1\times K_2))\subseteq\{\lambda\in\mf a_\C^\ast\mid  \|\Re(\lambda_i)\|\leq \max (0,\delta_i-\|\rho_i\|)\}.
	\]
\end{theorem*}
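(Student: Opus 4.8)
\emph{Overview.} The plan is to first reduce the statement about the joint spectrum $\widetilde\sigma$ to a lower bound for the bottom of the $L^2$-spectrum of each factor Laplacian separately, and then to prove that lower bound by a Poincaré-series construction combined with a ground-state transform, in the spirit of the rank one proofs of Sullivan and Corlette.

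\emph{Reduction.} Since $X=X_1\times X_2$ we have $\mathbb D(X)=\mathbb D(X_1)\otimes\mathbb D(X_2)=\C[\Delta_1,\Delta_2]$, where $\Delta_i$ is the Laplacian of the $i$-th factor, and $\chi_\lambda(\Delta_i)=\chi_{\lambda_i}(\Delta_{X_i})=\|\rho_i\|^2-\langle\lambda_i,\lambda_i\rangle$ in the paper's normalisation (for which tempered parameters are $i\mf a^\ast$). By \eqref{eq:defspectrum}, if $\lambda\in\widetilde\sigma(\G\backslash X/K)$ then for each $i$ the number $\|\rho_i\|^2-\langle\lambda_i,\lambda_i\rangle$ lies in $\sigma(_\G\Delta_i)\subseteq[0,\infty[$. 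As $\mf a_i$ is one-dimensional, reality of $\langle\lambda_i,\lambda_i\rangle$ forces $\Re(\lambda_i)=0$ or $\Im(\lambda_i)=0$; in the first case $\|\Re(\lambda_i)\|=0$, and in the second $\langle\lambda_i,\lambda_i\rangle=\|\Re(\lambda_i)\|^2$, so $\|\rho_i\|^2-\|\Re(\lambda_i)\|^2\in\sigma(_\G\Delta_i)$. Hence the whole theorem follows (it suffices to use the two operators $\Delta_1,\Delta_2$ among all of $\mathbb D(X)$) once we establish
\[
  \inf\sigma(_\G\Delta_i)\ \ge\ \|\rho_i\|^2-\max(0,\delta_i-\|\rho_i\|)^2,\qquad i=1,2 .
\]

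\emph{Ground-state transform and the Poincaré series.} Fix $i=1$ and basepoints $o_j\in X_j$. Suppose $\phi\colon X\to\,]0,\infty[$ is $\G$-invariant, locally Lipschitz, and on each slice $X_1\times\{x_2\}$ a distributional supersolution $\Delta_1\phi\ge E\phi$. For $g\in C_c^\infty(\G\backslash X)$ supported where $\phi>0$, writing $g=\phi\,(g/\phi)$ and integrating by parts in the $X_1$-directions gives
\[
  \int_{\G\backslash X}|\nabla_1 g|^2=\int_{\G\backslash X}(g/\phi)^2\,\phi\,\Delta_1\phi+\int_{\G\backslash X}\phi^2|\nabla_1(g/\phi)|^2\ \ge\ E\int_{\G\backslash X}|g|^2 .
\]
To produce such $\phi$ I would take, for $s>0$ and a smooth cutoff $\eta_R\ge 0$ supported in $[0,R]$ and positive on $[0,R[$,
\[
  \phi_{s,R}(x_1,x_2)\ =\ \sum_{\g\in\G}e^{-s\,d_1(x_1,\g_1 o_1)}\,\eta_R\!\big(d_2(x_2,\g_2 o_2)\big).
\]
Because geodesic spheres in a rank one symmetric space have mean curvature bounded below by the volume entropy $2\|\rho_1\|$, one checks $\Delta_1\,e^{-s\,d_1(\,\cdot\,,y_1)}\ge\big(\|\rho_1\|^2-(s-\|\rho_1\|)^2\big)\,e^{-s\,d_1(\,\cdot\,,y_1)}$ distributionally on $X_1$ for every $s>0$; hence $\phi_{s,R}$ is a supersolution with $E_s:=\|\rho_1\|^2-(s-\|\rho_1\|)^2$, it is $\G$-invariant by the usual reindexing of the $\G$-sum, and it is positive on $X_1\times U_R$, where $U_R\subseteq X_2$ is the $R$-neighbourhood of the orbit of $o_2$ under the projection of $\G$ to $G_2$. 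The series converges locally uniformly whenever $s>\delta_1$: near a compact set the contributing $\g$ have $\|\mu_+(\g_2)\|$ bounded, say $\le R'$, and $\sum_{\|\mu_+(\g_2)\|\le R'}e^{-s\|\mu_+(\g_1)\|}<\infty$ because $s>\delta_1=\sup_{R}\delta_1(R)\ge\delta_1(R')$. Finally $\bigcup_R U_R=X_2$, so any given $g\in C_c^\infty(\G\backslash X)$ is supported in $\G\backslash(X_1\times U_R)$ for $R$ large, where $\phi_{s,R}>0$; applying the ground-state estimate with $\phi=\phi_{s,R}$ shows $\int|\nabla_1 g|^2\ge E_s\int|g|^2$ for all such $g$, hence $\inf\sigma(_\G\Delta_1)\ge E_s$ for every $s>\max(0,\delta_1)$. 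The supremum of $E_s$ over this range is $\|\rho_1\|^2-\max(0,\delta_1-\|\rho_1\|)^2$, giving the claim, and the argument for $i=2$ is identical.

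\emph{Main obstacle.} The delicate point is the analytic bookkeeping around the degenerate-elliptic operator $_\G\Delta_1$, which is elliptic only in the $X_1$-directions: one must verify that the operator entering \eqref{eq:defspectrum} coincides with the self-adjoint operator of the Dirichlet form $g\mapsto\int|\nabla_1 g|^2$ on $\overline{C_c^\infty(\G\backslash X)}$ (so that a bound on this form controls $\inf\sigma$), and that the Poincaré series $\phi_{s,R}$ — which is only locally Lipschitz (conical singularities along the $\G$-orbit) and positive only on $X_1\times U_R$ — is legitimately usable as a ground state, the integration by parts above being justified by $\Delta_1\phi_{s,R}\in L^1_{\mathrm{loc}}$ together with $\nabla_1\phi_{s,R}\in L^\infty_{\mathrm{loc}}$. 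The geometric input, the sharp lower bound $2\|\rho_1\|$ for the mean curvature of geodesic spheres, is an elementary computation in each of the four families of rank one spaces.
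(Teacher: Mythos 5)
Your route is genuinely different from the paper's and its core analytic content is sound. The paper bounds $\sigma({}_\Gamma\Delta_1)$ by writing the Schwartz kernel of $({}_\Gamma\Delta_1-z)^{-1}$ as a $\Gamma$-average of $K_{(\Delta-z)^{-1}}(x_1,y_1)\,\delta_{x_2=y_2}$, applying Stone's formula, and controlling the averaged sum with the global Green-function estimates of Anker--Ji; the key mechanism is that the $\delta_{x_2=y_2}$ factor truncates the sum to $\Gamma_R=\{\gamma:\|\mu_+(\gamma_2)\|\le R\}$, which is exactly where your cutoff $\eta_R(d_2(x_2,\gamma_2 o_2))$ enters. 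Your Sullivan/Corlette-style substitute -- the positive supersolution $\phi_{s,R}$, the elementary mean-curvature bound $m(r)\ge 2\|\rho_1\|$ giving $\Delta_1 e^{-sd_1}\ge(\|\rho_1\|^2-(s-\|\rho_1\|)^2)e^{-sd_1}$, convergence for $s>\delta_1$, and the ground-state inequality -- is correct as far as it goes, and it avoids both Stone's formula and the Anker--Ji estimates, which is a real simplification of the analytic input. The final reduction to $\widetilde\sigma$ via \eqref{eq:defspectrum} and reality of $\langle\lambda_i,\lambda_i\rangle$ is the same as in the paper.

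However, the step you defer as ``analytic bookkeeping'' is a genuine gap, and it is the one place where your method has to pay for bypassing the resolvent. Your argument yields
\[
\int_{\Gamma\backslash X}|\nabla_1 g|^2\;\ge\;E\int_{\Gamma\backslash X}|g|^2\qquad\text{for all } g\in C_c^\infty(\Gamma\backslash X),
\]
but the operator ${}_\Gamma\Delta_1$ entering \eqref{eq:defspectrum} is the self-adjoint operator furnished by the direct-integral decomposition (maximal domain), and a lower bound of a quadratic form on a dense subspace of the domain does \emph{not} in general bound the spectrum of a given self-adjoint realization: for the Neumann Laplacian on $(0,1)$ the form is $\ge\pi^2$ on the dense subspace $C_c^\infty(0,1)$ of its domain, yet $0$ is an eigenvalue. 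Since $\Delta_1$ is only degenerate elliptic on the (non-product) quotient, essential self-adjointness on $C_c^\infty(\Gamma\backslash X)$ is not covered by the standard completeness theorems for the full Laplacian, so you must actually prove that $C_c^\infty(\Gamma\backslash X)$ is a core (or form core) for this realization, or argue around it. One workable fix: in the direct integral, ${}_\Gamma\Delta_1$ and ${}_\Gamma\Delta$ are multiplication by $m_1(z)=\chi_{\lambda_{z,1}}(\Delta_1)$ and $m(z)=m_1(z)+m_2(z)$ with $m_i\ge 0$ (positive-definiteness of the factor spherical functions), so $0\le m_1\le m$; since $C_c^\infty$ is a form core for the genuinely elliptic ${}_\Gamma\Delta$ by completeness, your bound extends to the form domain of ${}_\Gamma\Delta$, and testing on ranges of the joint spectral projections $\mathbf 1_{[0,N]}({}_\Gamma\Delta)\,\mathbf 1_{(-\infty,E-\varepsilon)}({}_\Gamma\Delta_1)$ then forces $m_1\ge E$ a.e. Without such an argument the proof is incomplete; with it (and the routine justification of the ground-state identity for the locally Lipschitz $\phi_{s,R}$, which is fine since $\Delta_1\phi_{s,R}\in L^1_{\mathrm{loc}}$), your proposal gives a valid alternative proof.
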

\begin{figure}[ht]
	\centering
	\includegraphics[width=0.8\textwidth,trim=2cm 4cm 8cm 2cm, clip]{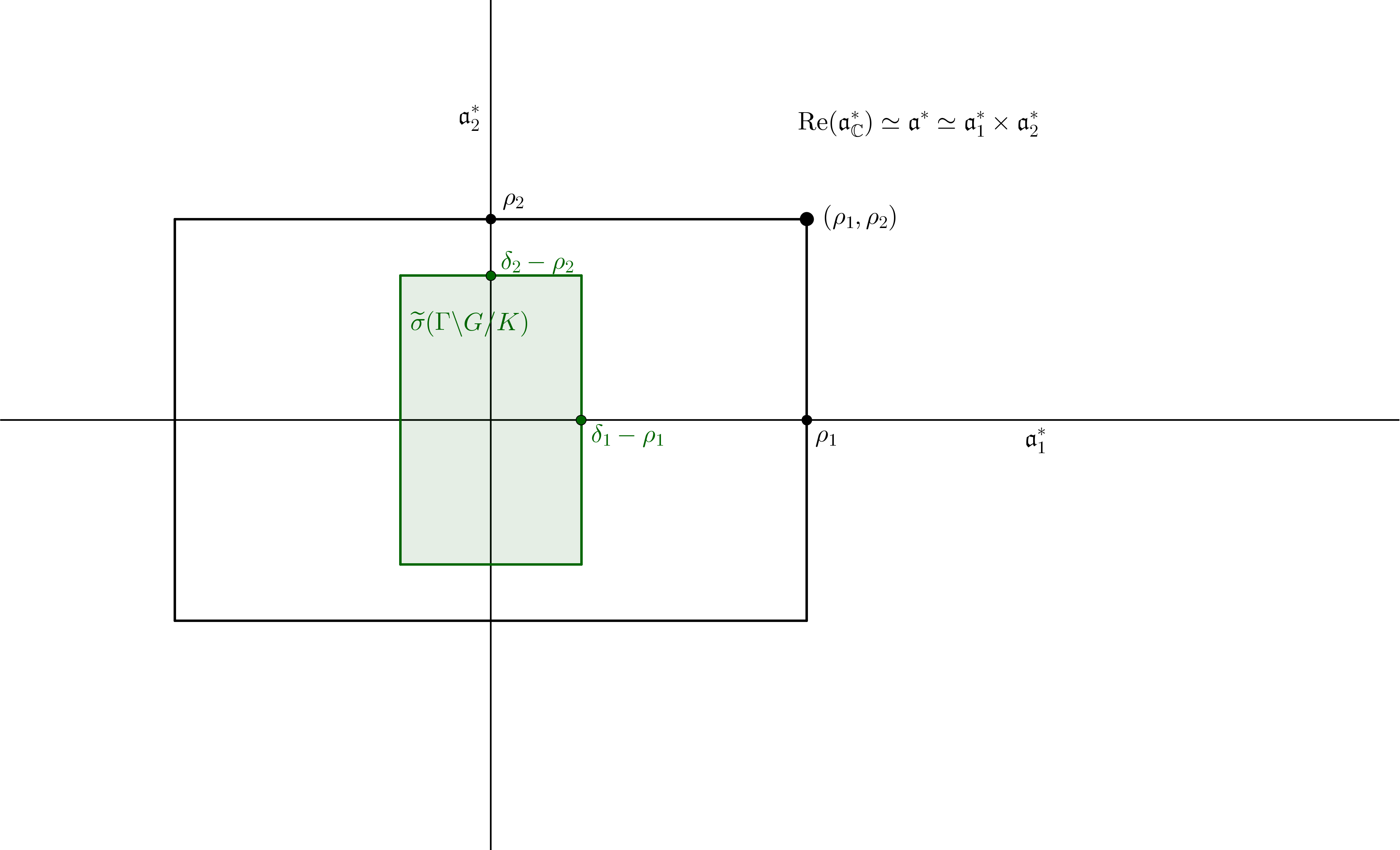}
	\caption{$\widetilde \sigma(\Gamma\backslash (G_1\times G_2)/(K_1\times K_2))$ for two rank one groups $G_i$}
	\label{fig:}
\end{figure}
For the proof we consider the Laplace operators on the two factors and use \eqref{eq:defspectrum} to bound $\widetilde \sigma$.
For these operators the proof is similar to the proofs of Patterson and Corlette, 
i.e. we obtain information about the spectrum by considering the resolvent kernel on the globally symmetric space $G/K$ and get the local version by averaging over $\Gamma$.
Analyzing the region of convergence of this averaging process leads to the theorem.

We obtain the following corollary.
\begin{corollary*}[Corollary~\ref{cor:temperednessProduct}]
	If $\delta_1\leq \|\rho_1\|$ and $\delta_2\leq \|\rho_2\|$ then $L^2(\Gamma\backslash (G_1\times G_2))$ is tempered. 
\end{corollary*}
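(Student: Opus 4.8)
The plan is to reduce the assertion, via \eqref{eq:defspectrum}, to a spectral lower bound for each of the two factor Laplacians, and to obtain that bound by the positive--supersolution (``ground state'') method underlying the rank-one arguments of Patterson and Corlette. Write $X_i=G_i/K_i$ and let $\Delta_i$ be the Laplacian of the $i$-th factor acting on $X_1\times X_2$, so that $\Delta_1,\Delta_2$ generate $\mathbb{D}((G_1\times G_2)/(K_1\times K_2))$; on the spherical function $\varphi_\lambda$ the operator $\Delta_i$ acts by the scalar $\chi_\lambda(\Delta_i)=\langle\rho_i,\rho_i\rangle-\langle\lambda_i,\lambda_i\rangle$. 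By \eqref{eq:defspectrum} it is enough to prove, for $i=1,2$,
\[
\sigma({}_\G\Delta_i)\ \subseteq\ \bigl[\,\|\rho_i\|^2-\max(0,\delta_i-\|\rho_i\|)^2\,,\ \infty\bigr[\,.
\]
Granting this: if $\lambda\in\widetilde\sigma$ then $\chi_\lambda(\Delta_i)\in\sigma({}_\G\Delta_i)$, hence (self-adjointness of ${}_\G\Delta_i$) it is real and $\ge\|\rho_i\|^2-\max(0,\delta_i-\|\rho_i\|)^2$; thus $\langle\lambda_i,\lambda_i\rangle\in\R$ and $\langle\lambda_i,\lambda_i\rangle\le\max(0,\delta_i-\|\rho_i\|)^2$, and since $\mf a_{i,\C}^\ast$ is one-dimensional, $\langle\lambda_i,\lambda_i\rangle\in\R$ forces $\Re\lambda_i=0$ or $\Im\lambda_i=0$; in either case $\|\Re\lambda_i\|\le\max(0,\delta_i-\|\rho_i\|)$, which is the claim. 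By symmetry I treat only $i=1$; set $M:=\G\backslash(X_1\times X_2)$, a complete Riemannian manifold because $\G$ is discrete and torsion-free.

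I will \emph{not} try to invert ${}_\G\Delta_1$ by averaging a resolvent kernel on $X_1\times X_2$: $\Delta_1$ is not elliptic there and its resolvent is a distribution along the $X_2$-factor, so an averaged-kernel/Schur argument is unavailable. Instead, fix a real $\lambda_1\in\mf a_1^\ast$ with $\|\lambda_1\|>\max(0,\delta_1-\|\rho_1\|)$, put $\kappa:=\|\rho_1\|+\|\lambda_1\|$ and $c:=\|\rho_1\|^2-\|\lambda_1\|^2$, so $c<\|\rho_1\|^2$ and $\kappa>\delta_1$. A direct computation with the radial part of $\Delta_{X_1}$ — using that the mean curvature $A'/A$ of geodesic spheres satisfies $A'/A>2\|\rho_1\|$ everywhere and $A'/A\to2\|\rho_1\|$ at infinity — shows that $e^{-\kappa\, d_1(\cdot,o)}$ is a positive distributional supersolution of $\Delta_{X_1}-c$; convolving with a fixed nonnegative bump function yields a \emph{smooth} positive $\psi$ on $X_1$ with $\Delta_{X_1}\psi\ge c\,\psi$ and $\psi(x_1)\asymp e^{-\kappa\,d_1(x_1,o)}$. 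Choosing a positive $\theta\in C^\infty(X_2)$, decreasing in $d_2(\cdot,o)$ and decaying fast enough, I set
\[
\Phi(x_1,x_2)\ :=\ \sum_{\g=(\g_1,\g_2)\in\G}\psi(\g_1^{-1}x_1)\,\theta(\g_2^{-1}x_2).
\]
Left-invariance of $\Delta_{X_1}$ and of $d_2$ makes $\Phi$ $\G$-invariant; it is positive (already the $\g=e$ summand is); and, differentiating the series termwise, $\Delta_1\Phi=\sum_\g(\Delta_{X_1}\psi)(\g_1^{-1}x_1)\,\theta(\g_2^{-1}x_2)\ge c\,\Phi$.

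The role of the exponent $\delta_1$ is to make this series converge. Since $\psi(\g_1^{-1}x_1)\lesssim_{x_1}e^{-\kappa\|\mu_+(\g_1)\|}$ and $\theta(\g_2^{-1}x_2)\lesssim_{x_2}\theta_0(\|\mu_+(\g_2)\|)$ for the radial profile $\theta_0$ of $\theta$, one gets $\Phi(x_1,x_2)\lesssim_{x_1,x_2}\sum_{n\ge0}\theta_0(n-1)\,S_n$ with $S_n:=\sum_{\g:\ \|\mu_+(\g_2)\|\le n}e^{-\kappa\|\mu_+(\g_1)\|}$; each $S_n$ is finite because $\kappa>\delta_1$ exceeds the $n$-slab critical exponent, and then a sufficiently fast $\theta_0$ (whose existence is elementary once the $S_n$ are known finite) makes $\Phi$ — and, with the same exponential bounds, all its derivatives — converge locally uniformly. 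Hence $\Phi$ descends to a smooth positive function on $M$, and for $f\in C_c^\infty(M)$ the ground-state substitution $f=\Phi g$ gives $\langle{}_\G\Delta_1 f,f\rangle_{L^2(M)}=\int_M(\Delta_1\Phi/\Phi)\,|f|^2+\int_M\Phi^2\,|\nabla_1 g|^2\ge c\int_M|f|^2$, where $\nabla_1$ is the gradient in the $X_1$-directions; so ${}_\G\Delta_1\ge c$ on the core $C_c^\infty(M)$, i.e.\ $\sigma({}_\G\Delta_1)\subseteq[c,\infty[$. Letting $\|\lambda_1\|\downarrow\max(0,\delta_1-\|\rho_1\|)$ (legitimate since then $\kappa>\delta_1$ throughout) yields $\sigma({}_\G\Delta_1)\subseteq[\,\|\rho_1\|^2-\max(0,\delta_1-\|\rho_1\|)^2,\infty[$, and with the first paragraph this proves the theorem.

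I expect the main difficulty to be the convergence analysis of the third paragraph: arranging the $\G$-average so that the second factor genuinely restricts the summation to bounded slabs $\{\|\mu_+(\g_2)\|\le R\}$ in $X_2$, and recognizing that inside each slab convergence is governed exactly by the restricted critical exponent whose supremum over $R$ is $\delta_1$, with no loss in the constants. The other conceptual point is the passage to the factor Laplacians and to supersolutions rather than attempting to build an $L^2$-bounded inverse directly. The remaining ingredients — the radial supersolution computation on $X_1$, termwise differentiation justified by the local-uniform exponential bounds, the validity of the ground-state identity for the smooth positive weight $\Phi$, and the identification of ${}_\G\Delta_i$ with the self-adjoint operator of Section~\ref{sec:setting} to which the form inequality applies — are routine.
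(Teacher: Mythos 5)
Your proposal reaches the corollary by a genuinely different route from the paper, and in outline it works. The paper makes the same reduction to a spectral lower bound for each factor Laplacian, but note that \eqref{eq:defspectrum} alone does not yield temperedness: the passage from $\widetilde\sigma(\Gamma\backslash G/K)\subseteq i\mf a^\ast$ to temperedness is Proposition~\ref{prop:temperednessfromspectrum}, which you are using implicitly and should cite. For the spectral bound itself the paper does exactly what you declare unavailable: it works with the distributional Schwartz kernel of $(\Delta_1-z)^{-1}$, namely the rank-one Green function tensored with $\delta_{x_2=y_2}$, averages it over $\Gamma$ (Lemma~\ref{la:resolventkernellocally}), and uses Stone's formula (Proposition~\ref{prop:StoneFormula}) precisely so that one never needs the averaged kernel to define a bounded operator; the delta factor confines the sum to the slabs $\Gamma_R$, and the Anker--Ji bounds (Theorem~\ref{thm:globalboundsGreen}) give convergence for exponents exceeding $\delta_1$. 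Your ground-state/supersolution argument replaces the Anker--Ji input by the elementary radial computation $\Delta e^{-\kappa r}=\kappa(A'/A-\kappa)e^{-\kappa r}\geq(\|\rho_1\|^2-\|\lambda_1\|^2)e^{-\kappa r}$ and replaces the delta factor by the auxiliary weight $\theta$ in the second variable, with convergence of the Poincar\'e series governed, exactly as in the paper, by the finiteness of the slab sums for every $R$ (this is where $\kappa>\delta_1$ enters). This is closer to Sullivan and Corlette in spirit and avoids Green-function asymptotics, Stone's formula and distributional kernels altogether.

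The one place where "routine" is doing real work is the final functional-analytic step: your form inequality $\langle{}_\Gamma\Delta_1 f,f\rangle\geq c\|f\|^2$ is established on $C_c^\infty(\Gamma\backslash X)$, whereas the operator ${}_\Gamma\Delta_1$ entering Proposition~\ref{prop:equivalencespectra} is the self-adjoint operator with maximal (direct-integral) domain. To conclude $\sigma({}_\Gamma\Delta_1)\subseteq[c,\infty[$ you need $C_c^\infty$ to be a core, i.e.\ essential self-adjointness of the non-elliptic partial Laplacian on the quotient; this is not covered by the standard Gaffney-type theorem for the full Laplacian, and without it your bound only controls the Friedrichs extension of the minimal operator. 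The gap is fillable — for instance via Nelson's commutator theorem applied with $N={}_\Gamma\Delta+1$, using that ${}_\Gamma\Delta$ is essentially self-adjoint on $C_c^\infty$ by completeness, that $\Delta_1$ commutes with $\Delta$ as differential operators, and that $\|\Delta_1 f\|\leq\|\Delta f\|$ on $C_c^\infty$ because $\langle\Delta_1 f,\Delta_2 f\rangle\geq0$ — but it should be argued, not asserted. Similarly, the choice of $\theta$ must also control the derivatives appearing in the termwise $C^2$-convergence (easily arranged, e.g.\ by building $\theta$ as a series of bump functions with coefficients chosen after the slab sums $S_n$ are known to be finite). With these two points made explicit, your proof is complete and yields the corollary via Proposition~\ref{prop:temperednessfromspectrum}.
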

An important example is a selfjoining: 
Let $\pi_i\colon G_1\times G_2 \to G_i$ be the projection on one factor.
Suppose that $\pi_i|_\Gamma$, $i=1,2$, both have finite kernel and discrete image. 
Then $\delta_1=\delta_2=-\infty$ and 
hence $L^2(\Gamma\backslash (G_1\times G_2))$ is tempered.
Any Anosov subgroup with respect to the minimal parabolic subgroup in $G_1\times G_2$ satisfies this assumption, 
but also satisfies additional assumptions, 
e.g. $\Gamma$ is word hyperbolic and $\|\mu_+(\pi_i(\gamma))\|$ is comparable to the word length of $\gamma\in \Gamma$ \cite{Lab06, GW12}.
Therefore we generalize this part of \cite{EdwardsOh22}.
In contrast, \cite{EdwardsOh22} also provide statements on the connection between temperedness and growth behavior of the Anosov subgroup $\Gamma$ for more general (globally) symmetric spaces $G/K$ which are not products of rank one symmetric spaces.
To extend our work to this more general setting one needs growth estimates for the kernel of the resolvent 
for suitable generators of the algebra $\mathbb{D}(G/K)$ which so far only seem to be known for the Laplace operator (see \cite{ankerji}).

\subsection*{Outline of the article}
In Section~\ref{sec:prelim} we recall some preliminaries about the symmetric space,
spherical functions, the spherical dual, and the Fourier-Helgason transform.
After that we define the Plancherel spectrum (see Section~\ref{sec:planchrelspectrum}) and the joint spectrum (see Section~\ref{ssub:Schmüdgen's joint spectrum}) and show that they coincide (see Proposition~\ref{prop:equivalencespectra}).
We also prove \eqref{eq:spectrumcontainsunitary} in Proposition~\ref{prop:spectrumcontainsimaginary}.
In Section~\ref{sec:temperedness} we show the connection between $\widetilde\sigma(\Gamma\backslash G/K)$ and the temperedness of $L^2(\Gamma\backslash G)$.
We suppose that the statements might be considered as folklore among experts in spectral theory of higher rank symmetric spaces, but as the literature on spectral theory of locally symmetric spaces of higher rank and infinite volume is very sparse we provide precise statements with complete proofs in this section.
In Section~\ref{sec:spectrumonproduct} we prove Corollary~\ref{cor:temperednessProduct}.
To do so we first recall the averaging procedure (see Lemma~\ref{la:resolventkernellocally}) and 
reprove the rank one result by \cite{Cor90} in a form that we need later (see Lemma~\ref{la:nongroup}).
We conclude this article by comparing the quantities $\delta_i$ with the growth indicator function $\psi_\Gamma$ (see Section~\ref{sec:growthindicator}). 
	\section{Preliminaries}
	\label{sec:prelim}
	\subsection{Setting}
	\label{sec:setting}
	In this section we introduce the notation in the general higher rank setting and only restrict to product spaces once it becomes necessary in order to emphasize clearly what the missing knowledge for the general higher rank setting is.
	Let $G$ be a real connected semisimple non-compact Lie group with finite center and  with Iwasawa decomposition $G=KAN$. 
	We denote by $\mf g, \mf a, \mf n, %
	\mf k%
	$ the corresponding Lie algebras. 
	For $g\in G$ let $H(g)$ be the logarithm of the $A$-component in the Iwasawa decomposition $KAN$.   
	We have a $K$-invariant inner product on $\mf g$ that is induced by the Killing form and the Cartan involution. 
	We further have the orthogonal Bruhat decomposition $\mf g =\mf a \oplus \mf m \oplus \bigoplus _{\alpha\in\Sigma} \mf g_\alpha$ into root spaces $\mf g_\alpha$ with respect to the $\mf a$-action via the adjoint action $\ad$. 
	Here $\Sigma\subseteq \mf a^\ast$ is the set of restricted roots. 
	Denote by $W$ the Weyl group of the root system of restricted roots. 
	Let $n$ be the real rank of $G$ and $\Pi$ (resp. $\Sigma^+$) the simple (resp. positive) system in $\Sigma$ determined by the choice of the Iwasawa decomposition.  
	Let $m_\alpha \coloneqq \dim_\R \mf g_\alpha$ and $\rho \coloneqq \frac 12 \Sigma_{\alpha\in \Sigma^+} m_\alpha \alpha$. 
	Let $\mf a_+ \coloneqq \{H\in \mf a\mid \alpha(H)>0 \,\forall \alpha\in\Pi\}$ denote the positive Weyl chamber  and $\mf a^\ast_+$ the corresponding cone in $\mf a^\ast$ via the identification $\mf a \leftrightarrow \mf a^\ast$ through the Killing form $\langle\cdot,\cdot\rangle$ restricted to $\mf a$. 
	If $\ov {A^+} \coloneqq \exp (\ov {\mf a_+})$, then we have the Cartan decomposition $G=K\ov {A ^+}K$. 
	For $g\in G$ we define $\mu_+(g)\in \ov{\mf a_+}$ by $g\in K\exp (\mu_+(g))K$. 
	The main object of our study is the symmetric space $X=G/K$ of non-compact type.

	Let $\mathbb D(G/K)$ be the algebra of \emph{$G$-invariant differential
	operators} on $G/K$, i.e. differential operators commuting with the
	left translation by elements  $g\in G$. Then we have an algebra
	isomorphism $\HC\colon \mathbb D(G/K)\to \text{Poly}(\mf a^\ast)^W$
	from $\mathbb D(G/K)$ to the $W$-invariant complex polynomials on $\mf
	a^\ast$ which is called the \emph{Harish-Chandra homomorphism} (see
	\cite[Ch.~II~Thm. 5.18]{gaga}). For $\lambda\in \mf a^\ast_\C$ let
	$\chi_\lambda$ be the character of $\mathbb D(G/K)$ defined by
	$\chi_\lambda(D)\coloneqq \HC(D)(\lambda)$. Obviously, $\chi_\lambda=
	\chi_{w\lambda}$ for $w\in W$. Furthermore, the $\chi_\lambda$ exhaust
	all characters of $\mathbb D(G/K)$ (see \cite[Ch.~III
	Lemma~3.11]{gaga}). 
	We define the space of joint eigenfunctions
	$$E_\lambda \coloneqq\{f\in C^\infty(G/K)\mid Df = \chi_\lambda (D) f
	\quad \forall D\in  \mathbb D(G/K)\}.$$ Note that $E_\lambda$ is
	$G$-invariant. 

	For example the (positive) Laplace operator $\Delta$ is contained in $\mathbb D(G/K)$ and $\chi_\lambda(\Delta)=-\langle\lambda,\lambda\rangle +\langle \rho,\rho\rangle$.

	\subsection{Spherical functions}
	One can show that in each joint eigenspace $E_\lambda$ there is a unique left $K$-invariant function which has the value $1$ at the identity (see \cite[Ch.~IV~Corollary 2.3]{gaga}). We denote the corresponding bi-$K$-invariant function on $G$ by $\phi_\lambda$ and call it \emph{elementary spherical function}. Therefore, $\phi_\lambda = \phi_\mu$ iff $\lambda =w\mu$ for some $w\in W$. It is given by  $\phi_\lambda (g) = \int _K e^{-(\lambda+\rho)H(g\inv k)} dk$. 
	Note that we differ from the notation in \cite{gaga} by a factor of $i$: $\phi^{\text{Hel}}_\lambda = \phi_{i\lambda}$. 

	\subsection{Functions of positive type and unitary representations}\label{sec:sphericalpos}
	In this section we recall the correspondence between elementary spherical functions of positive type and irreducible unitary spherical representations.
	Recall first that a continuous function $f\colon G\to \C$ is called \emph{of positive type} if the matrix $(f(x_i\inv x_j))_{i,j}$ for all $x_1,\ldots, x_k\in G$ is positive semidefinite. If $f$ is of positive type, then one has  $f(x\inv)=\ov{ f(x)}$ and $|f(g)|\leq f(1)$. Moreover, we can define a unitary representation $\pi_f$ associated to $f$ in the following way: If $R$ denotes the right regular representation of $G$, then $\pi_f$ is the completion of the space spanned by $R(x)f$ with respect to the inner product defined by $\langle R(x)f, R(y)f\rangle\coloneqq f(y\inv x)$ which is positive definite. $G$ acts unitarily on this space by the right regular representation. If $f(g)=\langle\pi(g) v,v\rangle$ is a matrix coefficient of a unitary representation $\pi$, then $f$ is of positive type and $\pi_f$ is contained in $\pi$.

	Secondly, recall that a unitary representation is called \emph{spherical} if it contains a non-zero $K$-invariant vector. Denote by $\widehat G_{\text{sph}}$ the subset of the unitary dual consisting of spherical representations.   We then have a 1:1-correspondence  between elementary spherical functions of positive type and  $\widehat G_{\text{sph}}$ given by $\phi_\lambda\mapsto \pi_{\phi_\lambda}$  (see \cite[Ch.~IV~Thm.~3.7]{gaga}). The preimage of an irreducible unitary spherical representation $\pi$ with normalized $K$-invariant vector $v_K$ is given by $g\mapsto \langle \pi(g)v_K,v_K\rangle$.

	\subsection{Harish-Chandra's c-Function}
	\begin{definition}
		We define the Harish-Chandra $\mathbf c$-function for $\lambda\in \mf a_\C^\ast$ with $\Re \lambda\in \mf a_+^\ast$ as the absolutely convergent integral 
		$$\mathbf c(\lambda) = \int_{\ov N} e^{-(\lambda+\rho)H(\ov n)} d\ov n,$$
		where $d\ov n$ is normalized such that $\cf (\rho)=1$.
		It is given by the product formula 
		\begin{align*}
			\cf(\lambda)=c_0 \prod_{\alpha\in\Sigma_0^+} \frac{2^{-\langle\lambda,\alpha_0\rangle}\G(\langle \lambda, \alpha_0\rangle)}{\G(\frac 14m_\alpha + \frac 12 +\frac 12 \langle \lambda,\alpha_0\rangle)\G(\frac 14m_\alpha + \frac 12m_{2\alpha} +\frac 12 \langle \lambda,\alpha_0\rangle)}
		\end{align*}
		where $\Sigma_0^+ = \Sigma^+\setminus \frac 12 \Sigma^+$, $\alpha_0=\alpha/\langle \alpha,\alpha\rangle$, and the constant $c_0$ is determined by $\cf(\rho)=1$. 
	\end{definition}

	\subsection{The Fourier-Helgason transform}
	For a sufficiently nice function $f\colon G/K\to \C$ we define the Fourier-Helgason transform of $f$ by 
	\begin{align*}
		\mc Ff(\lambda,kM)= \int_{G/K} f(gK)e^{(\lambda-\rho)H(g\inv k)}d(gK).
	\end{align*}

	Let $e_{\lambda,kM}(gK)=e^{-(\lambda+\rho)H(g\inv k)}$. Then we have $D e_{\lambda,kM}=\chi_\lambda(D) e_{\lambda,kM}$ by \cite[Ch.~II Lemma~5.15]{gaga} for every $D\in\mathbb D(G/K)$, $\lambda\in \mf a_\C^\ast$, $k\in K$, and $g\in G$.
	Therefore, \begin{align*}\mc F(Df)(\lambda,kM)&=\int_{G/K}
		Df(gK)\ov{e_{-\ov\lambda,kM}(gK)}d(gK)=\int_{G/K} f(gK)\ov{D^\ast
		e_{-\ov\lambda,kM}(gK)}d(gK)\\
&=\int_{G/K}^{} {f(gK) \ov{\chi_{-\ov \lambda}(D^\ast)e_{-\ov\lambda,kM}(gK)}} \: d(gK)=
\ov{\chi_{-\ov\lambda}(D^\ast)} \mc Ff(\lambda,kM).
	\end{align*} 
	By
	\cite[Lemma~5.21 and Cor.~5.3]{gaga} $\chi_{\lambda}(D^\ast)=
	\ov{\chi_{-\ov\lambda}(D)}$ so that we have the following lemma.
	\begin{lemma}\label{la:FourierIntertwiner}
		The Fourier-Helgason transform satisfies $\mc F(Df)(\lambda,kM)=\chi_{\lambda}(D) \mc Ff(\lambda,kM)$ for every $D\in \mathbb D(G/K)$.
	\end{lemma}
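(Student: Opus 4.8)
The statement is a purely formal consequence of two facts recorded just above it: the exponential functions $e_{\lambda,kM}$ are joint eigenfunctions for all of $\mathbb D(G/K)$, and the characters $\chi_\lambda$ transform in a controlled way under passage to the formal adjoint. My plan is to realize $\mc F$ as a pairing against these eigenfunctions and then move $D$ onto the eigenfunction side. Since $H(g\inv k)\in\mf a$ and $\rho$ is real on $\mf a$, one has $\ov{e_{-\ov\lambda,kM}(gK)}=e^{(\lambda-\rho)H(g\inv k)}$, which is exactly the kernel of the Fourier--Helgason transform, so $\mc Ff(\lambda,kM)=\int_{G/K}f(gK)\,\ov{e_{-\ov\lambda,kM}(gK)}\,d(gK)$ and likewise $\mc F(Df)(\lambda,kM)=\int_{G/K}Df(gK)\,\ov{e_{-\ov\lambda,kM}(gK)}\,d(gK)$.

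Next I would transfer $D$ to its formal adjoint $D^\ast\in\mathbb D(G/K)$ (with respect to the $G$-invariant measure on $G/K$): for sufficiently nice $f$, integration by parts gives $\mc F(Df)(\lambda,kM)=\int_{G/K}f(gK)\,\ov{D^\ast e_{-\ov\lambda,kM}(gK)}\,d(gK)$. By \cite[Ch.~II Lemma~5.15]{gaga} the function $e_{-\ov\lambda,kM}$ is a joint eigenfunction, so $D^\ast e_{-\ov\lambda,kM}=\chi_{-\ov\lambda}(D^\ast)\,e_{-\ov\lambda,kM}$, and pulling the conjugated scalar out of the integral yields $\mc F(Df)(\lambda,kM)=\ov{\chi_{-\ov\lambda}(D^\ast)}\,\mc Ff(\lambda,kM)$. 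Finally, applying $\chi_\mu(D^\ast)=\ov{\chi_{-\ov\mu}(D)}$ from \cite[Lemma~5.21 and Cor.~5.3]{gaga} with $\mu=-\ov\lambda$ gives $\chi_{-\ov\lambda}(D^\ast)=\ov{\chi_\lambda(D)}$, hence $\ov{\chi_{-\ov\lambda}(D^\ast)}=\chi_\lambda(D)$, which is the claimed intertwining relation.

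The only point that is analytic rather than bookkeeping is the integration-by-parts step: one must know that the boundary terms vanish so that $D$ may be moved onto $e_{-\ov\lambda,kM}$, which is where the precise meaning of ``sufficiently nice $f$'' enters. For compactly supported smooth $f$, or for the Schwartz-type class on which the Fourier--Helgason transform is usually set up, this is standard; everything else is the algebra of the Harish-Chandra homomorphism together with careful tracking of complex conjugates.
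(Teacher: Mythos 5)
Your argument is correct and is essentially identical to the paper's own proof: the paper likewise rewrites the Fourier--Helgason kernel as $\ov{e_{-\ov\lambda,kM}}$, moves $D$ to its formal adjoint, applies the eigenfunction property of $e_{-\ov\lambda,kM}$ from \cite[Ch.~II Lemma~5.15]{gaga}, and concludes with the character identity $\chi_{\lambda}(D^\ast)=\ov{\chi_{-\ov\lambda}(D)}$. Your remark about justifying the integration by parts for sufficiently nice $f$ is a reasonable addition that the paper leaves implicit.
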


	\begin{theorem}[{\cite[Ch. III Thm.~1.5]{HelGeomAna}}]\label{thm:FourierL2}
		The Fourier-Helgason transform is an isometry between $L^2(G/K)$ and $L^2 (i\mf a_+^\ast \times K/M, |\cf(\lambda)|^{-2} d\lambda d(kM))$. Moreover,
		$$\langle f,g\rangle_{L^2(G/K)} = |W|\inv \int_{i\mf a^\ast\times K/M} \mc Ff(\lambda,kM) \ov {\mc Fg(\lambda,kM)} |\cf(\lambda)|^{-2} d\lambda d(kM)$$
	\end{theorem}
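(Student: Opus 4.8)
The target is Theorem~\ref{thm:FourierL2}, the Plancherel theorem for the Fourier--Helgason transform on $L^2(G/K)$. Since the statement is quoted from \cite[Ch.~III~Thm.~1.5]{HelGeomAna}, the cleanest route is to cite it directly; but if one wants a self-contained argument in the present notational conventions (recall our $\lambda$ differs from Helgason's by a factor of $i$, i.e. $\phi^{\mathrm{Hel}}_\lambda=\phi_{i\lambda}$), the plan is to deduce it from the spherical (bi-$K$-invariant) Plancherel theorem together with the $K$-Fourier expansion that reduces the general case to the spherical one. First I would record the spherical inversion/Plancherel formula: for bi-$K$-invariant $f$ its spherical transform $\widetilde f(\lambda)=\int_{G/K} f(gK)\phi_{-\lambda}(gK)\,d(gK)$ satisfies $\|f\|_{L^2(G/K)}^2 = |W|^{-1}\int_{i\mf a^\ast} |\widetilde f(\lambda)|^2 |\cf(\lambda)|^{-2}\,d\lambda$, which is Harish-Chandra's theorem and for which the $\cf$-function product formula already quoted above controls the Plancherel density. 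Then I would observe $\mc Ff(\lambda,kM)=\widetilde f(\lambda)$ when $f$ is bi-$K$-invariant, so the theorem holds on the bi-$K$-invariant subspace.

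The next step is to pass from bi-$K$-invariant functions to all of $L^2(G/K)$ by exploiting the $K$-action on $K/M$. Decompose under the right $K$-action: $L^2(G/K)=\bigoplus_{\tau\in\widehat K} L^2(G/K)_\tau$, and correspondingly expand both $f$ and $\mc Ff(\lambda,\cdot)$ into $K$-isotypic pieces. Concretely, for fixed $g$, the map $kM\mapsto e^{(\lambda-\rho)H(g^{-1}k)}$ is a fixed vector in $L^2(K/M)$, and convolving $f$ on the right by a $K$-finite approximate identity reduces the problem to a dense subspace of $K$-finite vectors. On each $K$-type one uses the Eisenstein-integral (generalized spherical function) refinement of the spherical transform; the key algebraic input is that the Plancherel density is still $|\cf(\lambda)|^{-2}$ independent of the $K$-type, which follows from Harish-Chandra's work (equivalently, from the fact that the principal series $\pi_{i\lambda}$ used here are irreducible for generic $\lambda$ and the $\cf$-function governs the intertwining operators). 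Assembling the isotypic pieces via Parseval on $L^2(K/M)$ gives the stated identity with the factor $|W|^{-1}$ coming, as in the spherical case, from the $W$-fold redundancy $\phi_\lambda=\phi_{w\lambda}$ (equivalently the $W$-orbit identification $i\mf a^\ast/W\leftrightarrow i\mf a_+^\ast$).

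Finally I would upgrade the Parseval identity on a dense subspace (say $C_c^\infty(G/K)$, or $K$-finite such functions) to a genuine isometry $L^2(G/K)\to L^2(i\mf a_+^\ast\times K/M,\ |\cf(\lambda)|^{-2}\,d\lambda\,d(kM))$ by a standard density-plus-continuity argument, and check surjectivity onto the Weyl-invariant part via the Fourier inversion formula (the range being exactly the functions satisfying the symmetry $\mc Ff(w\lambda, kM)$-relations, which accounts for why one may integrate over $i\mf a_+^\ast$ with full multiplicity, or over $i\mf a^\ast$ with the compensating $|W|^{-1}$). The main obstacle is the uniformity of the Plancherel density across $K$-types: making precise that the Eisenstein integrals for nontrivial $\tau$ contribute with the same density $|\cf(\lambda)|^{-2}$ requires either invoking Harish-Chandra's Plancherel theorem for the full principal series or a direct asymptotic analysis of the generalized spherical functions. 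Everything else --- the spherical case, the $K$-type bookkeeping, and the density argument --- is routine, so in practice I would simply cite \cite{HelGeomAna} and \cite{gaga} for this theorem and move on.
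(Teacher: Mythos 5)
The paper states this result purely as a citation to \cite[Ch.~III~Thm.~1.5]{HelGeomAna} and gives no proof of its own, and your proposal ultimately does the same, with the intermediate sketch (spherical Plancherel plus reduction of the general case to the bi-$K$-invariant one, keeping track of the factor-of-$i$ normalization and the $|W|^{-1}$ redundancy) being a reasonable outline of the standard argument found in Helgason. So this is essentially the same approach as the paper, and citing the reference is exactly what is intended here.
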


	In particular, Lemma~\ref{la:spectrumdirectintegral} implies that
	$\sigma(D)=\on{essran}[i\mf a^\ast_+\to \C,\lambda\mapsto \chi_\lambda(D)]$ with respect to the measure $|\cf(\lambda)|^{-2}d\lambda d(kM)$.
	Since $\chi_\lambda(D)$ is polynomial and $|\cf(\lambda)|^{-2}>0$ for $\lambda\in i\mf a_+^\ast$ we find that the spectrum of $D$ is the closure of $\{\chi_\lambda(D)\mid \lambda\in i\mf a_+^\ast\}$.
	As $\chi_\lambda(D)$ is $W$-invariant this coincides with the closure of $\{\chi_\lambda(D)\mid \lambda\in i\mf a^\ast\}$.

	\section{Spectra for locally symmetric spaces}
	\label{sec:spectrumlocally}
	In this section we recall different types of spectra for the algebra $\mathbb D (G/K)$ on a locally symmetric space.

	Let $\G\leq G$ be a torsion-free discrete subgroup.
	\subsection{Plancherel spectrum}
	\label{sec:planchrelspectrum}
	We want establish a spectrum for the algebra $\mathbb{D}(G/K)$ of $G$-invariant differential operators.
	Let us start with the spectrum that is obtained from decomposing the representation $L^2(\G\backslash G)$.

\begin{theorem}[{see e.g. \cite[Thm. F.5.3]{propT}}]
	Let $\pi$ be a unitary representation of $G$. Then there exists a standard Borel space $Z$, a propability measure $\mu$ on $Z$, and a measurable field of irreducible unitary representations $(\pi_z,\mc H_z)$ such that $\pi$ is unitarily equivalent to the direct integral $\int^ \oplus_Z \pi_zd\mu(z)$.
\end{theorem}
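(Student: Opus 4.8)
The plan is to obtain the decomposition from von Neumann algebra reduction theory applied to the commutant of $\pi(G)$; throughout we assume that the representation space $\mc H$ is separable, as is the case in all our applications (e.g.\ $\mc H=L^2(\G\backslash G)$ is separable since $\G\backslash G$ is second countable). First I would set $\mc M\coloneqq\pi(G)'\subseteq B(\mc H)$ and fix a maximal abelian von Neumann subalgebra $\mc A\subseteq\mc M$. Because $\mc H$ is separable, the structure theory of abelian von Neumann algebras yields a standard Borel space $Z$, a probability measure $\mu$ on $Z$, and a $\ast$-isomorphism $\mc A\cong L^\infty(Z,\mu)$ under which $\mc A$ is realized, on a suitable direct integral model, as the diagonal algebra; this produces the pair $(Z,\mu)$ of the statement.

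Concretely, reduction theory provides a measurable field of Hilbert spaces $(\mc H_z)_{z\in Z}$ and a unitary $\mc H\cong\int_Z^\oplus\mc H_z\,d\mu(z)$ identifying $\mc A$ with multiplication by $L^\infty(Z,\mu)$, so that the algebra of \emph{all} decomposable operators is exactly $\mc A'$. Since $\mc A\subseteq\pi(G)'$ we have $\pi(G)\subseteq\mc A'$, hence every $\pi(g)$ is decomposable: $\pi(g)=\int_Z^\oplus\pi_z(g)\,d\mu(z)$. Running the defining relations of $G$ over a countable dense subgroup and using strong continuity, one checks that after removing a $\mu$-null set the fibres assemble into a measurable field of unitary representations $(\pi_z,\mc H_z)$ with $\pi\cong\int_Z^\oplus\pi_z\,d\mu(z)$.

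It remains to see that $\pi_z$ is irreducible for $\mu$-almost every $z$, and this is where maximality of $\mc A$ enters. The decomposable operators commuting with $\pi(G)$ are $\mc M\cap\mc A'$, and they disintegrate as $\int_Z^\oplus\pi_z(G)'\,d\mu(z)$. On the other hand $\mc A$ is its own relative commutant in $\mc M$, i.e.\ $\mc M\cap\mc A'=\mc A\cong\int_Z^\oplus\C\,\mathrm{id}_{\mc H_z}\,d\mu(z)$. Comparing the two disintegrations forces $\pi_z(G)'=\C\,\mathrm{id}_{\mc H_z}$ for $\mu$-a.e.\ $z$, i.e.\ $\pi_z$ is irreducible; redefining $\pi_z$ arbitrarily on the exceptional null set concludes the argument.

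The real obstacle is the reduction-theory input itself: constructing $(\mc H_z)$ and the disintegrating unitary from $\mc A$, and checking (via measurable selection, essential uniqueness of the disintegration, and a.e.\ strong continuity of the fibres) that the pieces fit together into honest measurable fields of representations and of operators. These are standard but technical facts of von Neumann algebra theory that one would cite rather than reprove; only the last step --- maximality of the masa forcing irreducibility of the fibres --- is genuinely short. An alternative route is to pass to the separable $C^\ast$-algebra $C^\ast(G)$, identify unitary representations of $G$ with nondegenerate $\ast$-representations of $C^\ast(G)$, and quote the analogous disintegration theorem for separable $C^\ast$-algebras on separable Hilbert spaces.
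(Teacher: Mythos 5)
Your sketch is correct, but note that the paper does not prove this statement at all --- it is quoted directly from the cited reference (Bekka--de la Harpe--Valette, Thm.~F.5.3), and your argument (disintegrate over a maximal abelian von Neumann subalgebra of the commutant $\pi(G)'$, use second countability of $G$ to get a measurable field of representations, and maximality of the masa to force irreducibility of $\mu$-a.e.\ fibre) is essentially the standard proof given there. Your remark that separability of $\mc H_\pi$ is needed is apt --- it is implicit in the paper's statement and satisfied in its application to $L^2(\G\backslash G)$.
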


According to the previous theorem let $L^2(\G\backslash G)$ be the direct integral $\int^\oplus_Z \pi_z d\mu(z)$.
We denote by $Z_{\on{sph}}$ the subset $\{z\in Z\mid \pi_z\text{ is spherical}\}$ of $Z$ where \emph{spherical} means that the representation has a non-zero $K$-invariant vector.
We note that projection $P\colon L^2 (\G\backslash G)\to L^2(\G\backslash G)^K$ onto the $K$-invariant vectors is given by $\int_K R(k)dk$ where $R$ is the representation of $G$ on $L^2(\G\backslash G)$.
Hence, there is a measurable vector field $z\mapsto v_z^K$ such that $v_z^K\in \mc H_z^K$ is of norm $1$ if $\mc H_z^K\neq 0$. 
In particular, $Z_{\on{sph}}$ is measurable.
For $z\in Z_{\on{sph}}$ the representation $\pi_z$ is unitary, irreducible, and spherical. 
	By Section~\ref{sec:sphericalpos} $\pi_z\simeq \pi_{\phi_{\lambda_z }}$ 
	for some $\lambda_z\in \mf a_\C^\ast$ 
	such that $\phi_{\lambda_z}$
	is of positive type.

Recall the definition of the essential range for a measurable function $f\colon (Z,\mu)\to Y$ from a probability space  into a second countable topological space $Y$:
\[
	\on{essran} f\coloneqq \{y\in Y\mid \,\forall \,U\subseteq Y \text{ open}, y\in U \colon \mu(f\inv (U))>0\}.
\]
By definition $\on{essran} f$ equals the support of the pushforward measure $f_\ast\mu$
and for $A\subseteq Y$ closed $\on{essran} f \subseteq A$ if and only if $f(z)\in A$ for $\mu$-a.e. $z\in Z$ which we can see as follows:
Clearly, if $\mu(\{f(z)\notin A\}) = 0$, then 
$\on{essran} f\cap Y\setminus A=\emptyset$.
Hence, $\on{essran} f\subseteq A$.
Conversely, if $\on{essran} f \cap Y\setminus A=\emptyset$ then for every $a\in Y\setminus A$ we find an open neighborhood $N_a$ of $a$ with $\mu(f\inv(N_a))=0$.
Since $Y$ is second countable $Y\setminus A$ can be covered by countably many $N_a$.
Thus $\mu(f\inv(Y\setminus A)) \leq \sum \mu(f\inv (N_a))=0$.
Therefore, $f(z)\in A$ for $\mu$-a.e. $z\in Z$.

The following lemma motivates the definition of the Plancherel spectrum.
\begin{lemma}\label{la:spectrumdirectintegral}
	Let $\mc H  =\int^\oplus_Z \mc H_zd\mu(z)$ be the direct integral of
the field $(\mc H_z)_{z\in Z}$ of Hilbert spaces over the  $\sigma$-finite
measure space $(Z,\mu)$. Let $T=\int^\oplus_Z T_z d\mu(z)$ be the direct
integral of the field of operators $(T_z)_{z\in Z}$ such that $T(z)=f(z)id_{\mc
H_z}$ for a measurable function $f$ where the domain of $T$ is $\{\int_{Z}^{\oplus} {y_z} \: d\mu(z)\in \mc H\mid \int_{Z}^{\oplus} {|f(z)|^2 \|y_z\|^2} \: d\mu(z)<\infty\}$.
Then $$\sigma(T)=\on{ess ran} f = \{y\in
\C\mid \,\forall\, \eps>0\colon \mu (f\inv(B_\varepsilon(y)))>0\}.$$
\end{lemma}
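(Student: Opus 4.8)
The plan is to prove the two set inclusions $\on{essran} f \subseteq \sigma(T)$ and $\sigma(T)\subseteq \on{essran} f$ separately, after first disposing of the trivial equality $\on{essran} f = \{y\in\C\mid \forall\eps>0\colon \mu(f\inv(B_\eps(y)))>0\}$, which is just the definition of essential range specialized to the second-countable space $\C$ together with the observation that the balls $B_\eps(y)$ form a neighborhood basis. Throughout I will use that $T-y\,\id = \int^\oplus_Z (f(z)-y)\id_{\mc H_z}\,d\mu(z)$ is again a diagonalizable (decomposable) operator of the same type, so it suffices to characterize when $0$ lies in the spectrum of such an operator, i.e.\ to show that $T$ is boundedly invertible if and only if $0\notin\on{essran} f$.

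For the inclusion $\C\setminus\on{essran} f\subseteq \C\setminus\sigma(T)$ (i.e.\ $\on{essran} f$ contains the spectrum): suppose $y\notin\on{essran} f$, so there is $\eps>0$ with $\mu(f\inv(B_\eps(y)))=0$. Then $|f(z)-y|\geq\eps$ for $\mu$-a.e.\ $z$, so $g(z)\coloneqq (f(z)-y)\inv$ is a bounded measurable function with $\|g\|_\infty\leq\eps\inv$, and $S\coloneqq\int^\oplus_Z g(z)\id_{\mc H_z}\,d\mu(z)$ defines a bounded operator on all of $\mc H$ (boundedness of $g$ guarantees the domain is everything). A direct check on decomposable vectors shows $S(T-y\,\id)\subseteq\id$ and $(T-y\,\id)S=\id$, with $S$ mapping $\mc H$ into $\dom(T-y\id)=\dom T$; hence $T-y\,\id$ is bijective with bounded inverse, so $y\notin\sigma(T)$.

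Conversely, to show $\on{essran} f\subseteq\sigma(T)$, take $y\in\on{essran} f$, so $\mu(f\inv(B_{1/n}(y)))>0$ for every $n\in\N$. Here I would use $\sigma$-finiteness to replace each $E_n\coloneqq f\inv(B_{1/n}(y))$ by a measurable subset $E_n'\subseteq E_n$ of finite positive measure, and then use that the field of Hilbert spaces is nontrivial on a positive-measure subset of $E_n'$ to pick a measurable unit vector field $(u_z)$ supported on such a subset $A_n$ with $0<\mu(A_n)<\infty$; setting $y_n\coloneqq \mu(A_n)^{-1/2}\int^\oplus_{A_n} u_z\,d\mu(z)$ gives a unit vector in $\mc H$ with $\|(T-y\,\id)y_n\|^2 = \mu(A_n)\inv\int_{A_n}|f(z)-y|^2\,d\mu(z)\leq 1/n^2$. (One must check each $y_n$ lies in $\dom T$, which holds because $f$ is bounded by $|y|+1$ on $A_n\subseteq E_1$.) Thus $T-y\,\id$ is not bounded below, so it has no bounded inverse and $y\in\sigma(T)$.

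The main obstacle is the measurable-selection bookkeeping in the second inclusion: one needs that over a positive-measure set on which $\mc H_z\neq 0$ there genuinely exists a measurable unit vector field, and that the resulting vectors can be normalized on finite-measure pieces — this is where $\sigma$-finiteness is essential and where a direct-integral statement like the decomposition theorem (or the standard construction of direct integrals) must be invoked. The analytic content of the rest is routine Hilbert-space manipulation on decomposable vectors, and the first inclusion is essentially a one-line observation once the bounded inverse is written down explicitly.
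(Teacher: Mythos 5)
Your proposal is correct and follows essentially the same route as the paper: a $y\notin\on{essran}f$ is excluded from $\sigma(T)$ by exhibiting the bounded diagonal inverse $\int^\oplus_Z (f(z)-y)\inv\id_{\mc H_z}\,d\mu(z)$, and a $y\in\on{essran}f$ is shown to lie in $\sigma(T)$ via unit vectors supported on $f\inv(B_\eps(y))$ giving $\|(T-y)y_\eps\|\leq\eps$. The extra bookkeeping you supply (finite-measure subsets via $\sigma$-finiteness, measurable unit vector fields, the domain check) only makes explicit what the paper's proof takes for granted when it asserts the existence of the unit vector $y_\eps$ supported on $A_\eps$.
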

\begin{proof}
	If $\lambda\notin \on{essran} f$ then there is $\varepsilon >0$ such that $|f(z)-\lambda|\geq \eps$ for a.e. $z\in Z$.
	Hence, $\int_Z \frac 1{f(z)-\lambda} id_{\mc H_z}d\mu(z)$ is bounded operator with operator norm $\leq 1/\eps$ inverting $T-\lambda$. 
	Therefore, $\lambda\notin \sigma(T)$.

	Conversely, let $\lambda\in \on{essran}f$ and $\eps>0$.
	Then $A_\eps\coloneqq \{z\in Z\mid|f(z)-\lambda|<\eps\}$ has positive measure and there is a unit vector $y_\eps=\int^\oplus_Z y_{\eps,z} d\mu(z)\in\mc H$ such that $ y_{\eps,z} =  0 $ for $z\notin A_\eps$.
	It follows that $$\|(T-\lambda)y_\eps\|^2=\left\|\int^ \oplus_Z (f(z)-\lambda)y_{\eps,z}d\mu(z)\right\|^2 = \int_{A_\eps} |f(z)-\lambda|^2 \|y_{\eps,z}\|^2d\mu(z)\leq \eps^2.$$
	Consequently, $T-\lambda$ cannot be invertible.
\end{proof}

For a locally symmetric space $\G\backslash G/K$ we define 
\[
	\widetilde \sigma (\G\backslash G/K) \coloneqq \on{essran} [z\mapsto \lambda_z] \subseteq \mf a_\C^\ast/W.
\]
Note that $\widetilde \sigma(\Gamma\backslash G/K)\subseteq \{\lambda\in \mf a_\C^\ast/W\mid \phi_\lambda \text{ is of positive type}\}$.
In particular, since functions of positive type are bounded $\widetilde \sigma(\Gamma\backslash G/K)\subseteq \on{conv}(W\rho)$ (see \cite[Ch.~IV~Thm.~8.1]{gaga}).
Furthermore, $\phi_\lambda=\phi_{-\ov \lambda}$ so that $\widetilde \sigma(\Gamma\backslash G/K)\subseteq \{\lambda\in \mf a_\C^\ast/W\mid -\ov \lambda\in W\lambda\}$ (see e.g.~\cite[Sec.~4]{qcchigherrank}).

\subsection{The joint spectrum}%
\label{ssub:Schmüdgen's joint spectrum}
In this section we describe a different kind of spectrum for $\mathbb{D}(G/K)$ that takes the action of the operators into account instead of the representation theoretical decomposition (see \cite[Ch.~5.2.2]{Schmudgen}).
\begin{definition}[{see \cite[Prop. 5.27]{Schmudgen}}]
	Let $T_1$ and $T_2$ be (not necessarily bounded) normal operators on a Hilbert space $\mc H$.
	We say that $T_1$ and $T_2$ strongly commute if their spectral measures $E_{T_1}$ and $E_{T_2}$ commute.
\end{definition}
For strongly commuting normal operators we can define the following joint spectrum.
\begin{definition}[{see \cite[Prop. 5.24]{Schmudgen}}]
	\label{def:jointspectrum}
	Let $T=\{T_1,\ldots,T_n\}$ be a family of pairwise strongly commuting operators on a Hilbert space $\mc H$.
	We define 
	$	\sigma_{j}(T) $
	to be the set of all $s\in \C^n$ such that there is a sequence
	$(x_k)_{k\in \N}$ of unit vectors in $\bigcap_{i=1}^n \dom(T_i)\subseteq \mc H$
	satisfying 
	\[
		\lim_{k\to \infty} (T_i-s_i)x_k=0
	\]
	for all $i=1,\ldots,n$.
	We call the sequence $(x_k)$ joint approximate eigenvector.	
\end{definition}
Clearly, every joint approximate eigenvector is an approximate eigenvector for $T_i$.
Hence, $s_i\in \sigma(T_i)$ for $s\in \sigma_j(T_i)$ and
({see \cite[Prop. 5.24(ii)]{Schmudgen}}):
	\[
		\sigma_{j}(T)\subseteq \sigma(T_1)\times \cdots \times \sigma(T_n).
	\]

Let us come back to the invariant differential operators on a locally symmetric space.
By definition $D\in\mathbb{D}(G/K)$ is $G$-invariant and 
therefore it maps $\G$-invariant elements in $C^\infty(G/K)$ into itself.
Since ${}^\G C^\infty(G/K) \simeq C^\infty(\Gamma\backslash G/K)$ we obtain a differential operator ${}_\Gamma D$ on $\Gamma \backslash G/K$.
Using the direct integral decomposition it is easy to see that ${}_\G D$ is a normal operator on $L^2(\G\backslash G/K)$ for $D\in \mathbb D(G/K)$ (with domain $\{f\in L^2(\G\backslash G/K)\mid {}_\G D f\in L^2(\G\backslash G/K)\}$).
Furthermore, the spectral measure is given by
\[
	E_{{}_\G D}(M) \int_{Z_{\on{sph}}}^{\oplus} {f_z} \: d\mu(z) = \int_{\{z\mid \chi_{\lambda_z}(D)\in M\}}^{\oplus} {f_z} \: d\mu(z).
\]
We obtain that ${}_\G D_1$ and ${}_\G D_2$ strongly commute for
$D_1,D_2\in \mathbb D(G/K)$ and hence we can define the joint spectrum
for any finite family $\{{}_\G D_1,\ldots,{}_\G D_n\}$.

\subsection{Comparison of spectra}
In this section we want to see that the Plancherel spectrum and the joint spectrum coincide.
In order to achieve this we need the following lemma.

\begin{lemma}\label{la:polyhomeo}
	Let $p_1,\ldots,p_n\in Poly(\mf a_\C^\ast)^W$ be non-constant complex Weyl group invariant homogeneous  
	polynomials of degree $d_i$ on $\mf a_\C^\ast$ that separate the points on $\mf a_\C^\ast/W$.
	Then $\mf a_\C^\ast/W\to \C^n,\, \lambda \mod W\mapsto
(p_1(\lambda),\ldots,p_n(\lambda))$ is a topological embedding.  \end{lemma}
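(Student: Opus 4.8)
The map in question is $P\colon\mf a_\C^\ast/W\to\C^n$, $\lambda\bmod W\mapsto(p_1(\lambda),\dots,p_n(\lambda))$; it is well defined because the $p_i$ are $W$-invariant, continuous because they are polynomials, and injective precisely by the separation hypothesis. The plan is to show that $P$ is a \emph{closed} map (equivalently, that $P^{-1}$ of a compact set is compact), since a continuous injective closed map is a homeomorphism onto its image, which is exactly what "topological embedding" means. Both $\mf a_\C^\ast/W$ and $\C^n$ are metrizable — the former is the quotient of the finite-dimensional space $\mf a_\C^\ast$ by the finite linear group $W$, which acts by isometries once we replace the Killing norm by its $W$-average, so the quotient metric $d([\lambda],[\mu])=\min_{w\in W}\|\lambda-w\mu\|$ works, and in particular $d([\lambda],[0])=\|\lambda\|$ descends — so I will argue throughout with sequences. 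The whole difficulty is the non-compactness of $\mf a_\C^\ast/W$: if it were compact, "continuous injective from a compact space to a Hausdorff space is a homeomorphism onto its image" would finish immediately, so the only real work is a properness estimate at infinity, and this is where homogeneity enters.

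\textbf{Key point: the common zero locus is $\{0\}$.} Each $p_i$ is non-constant and homogeneous, hence of degree $d_i\geq 1$, so $p_i(0)=0$ and therefore $P(q(0))=0$, where $q\colon\mf a_\C^\ast\to\mf a_\C^\ast/W$ is the quotient map. Conversely, if $\lambda\in\mf a_\C^\ast$ satisfies $p_i(\lambda)=0$ for all $i$, then $P(q(\lambda))=0=P(q(0))$, so injectivity of $P$ forces $q(\lambda)=q(0)$, i.e.\ $\lambda\in W\cdot 0=\{0\}$. Thus $\{\lambda\in\mf a_\C^\ast\mid p_1(\lambda)=\dots=p_n(\lambda)=0\}=\{0\}$.

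\textbf{The properness estimate and conclusion.} On the unit sphere $S=\{\lambda\in\mf a_\C^\ast\mid\|\lambda\|=1\}$, the function $\omega\mapsto\max_i|p_i(\omega)|$ is continuous and, by the previous step, strictly positive; since $S$ is compact it attains a minimum $c>0$. For any $\lambda\neq 0$ write $\lambda=r\omega$ with $r=\|\lambda\|$ and $\omega\in S$, and pick $i$ with $|p_i(\omega)|\geq c$; homogeneity gives $\max_j|p_j(\lambda)|\geq|p_i(\lambda)|=r^{d_i}|p_i(\omega)|\geq c\,r^{d_i}\geq c\,r$ whenever $r\geq 1$ (using $d_i\geq 1$). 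Hence $\max_j|p_j(\lambda)|\geq c\|\lambda\|$ for $\|\lambda\|\geq 1$. Now let $q(\lambda_k)$ be a sequence with $P(q(\lambda_k))\to P(q(\lambda))$; choosing representatives $\lambda_k\in\mf a_\C^\ast$, the sequence $P(q(\lambda_k))$ is bounded in $\C^n$, so by the estimate the $\|\lambda_k\|$ are bounded, and a subsequence $\lambda_{k_j}$ converges to some $\mu$. By continuity $P(q(\mu))=P(q(\lambda))$, and injectivity gives $q(\mu)=q(\lambda)$, so $q(\lambda_{k_j})\to q(\lambda)$. Since every subsequence of $(q(\lambda_k))$ has a further subsequence converging to $q(\lambda)$, the whole sequence converges to $q(\lambda)$; thus the inverse of $P$ (defined on its image) is sequentially continuous, hence continuous by metrizability, and $P$ is the desired embedding. (Equivalently phrased: $C\subseteq\C^n$ compact implies $q^{-1}(P^{-1}(C))$ is closed and, by the estimate, bounded, hence compact, so $P^{-1}(C)=q(q^{-1}(P^{-1}(C)))$ is compact; $P$ is proper, hence closed into the locally compact Hausdorff space $\C^n$.) The nontriviality of $W$ causes no difficulty: the $W$-invariant norm, the finiteness of $q$, and the Hausdorff metrizable structure of $\mf a_\C^\ast/W$ are all routine.
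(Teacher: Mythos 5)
Your proof is correct and follows essentially the same route as the paper: both use injectivity plus homogeneity to see the $p_i$ have no common zero on the unit sphere, extract a positive lower bound $c$ by compactness, deduce the growth estimate $\max_i|p_i(\lambda)|\geq c\|\lambda\|$ for $\|\lambda\|\geq 1$, and thereby bound any sequence of preimages before concluding continuity of the inverse by a compactness argument. The only (cosmetic) difference is that you finish with a subsequence extraction in $\mf a_\C^\ast$, while the paper restricts $\Phi$ to a compact ball modulo $W$ and invokes the compact-to-Hausdorff embedding fact.
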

\begin{proof}
	By definition the mapping $\Phi\colon \lambda \mod W\mapsto
	(p_1(\lambda),\ldots,p_n(\lambda))$ is injective and continuous.
	It remains to show that $\Phi\inv$ is continuous, 
	i.e. for $\lambda_n\in \mf a_\C^\ast$ with $\Phi(\lambda_n)\to \Phi(\lambda_0)$ we have 
	$\lambda_n \mod W \to \lambda_0 \mod W$.
	Since the polynomials $p_i$ are homogeneous it is clear that $\Phi(0)=0$ and 
	$0$ is not contained in $\Phi(\{\lambda\in\mf a_\C^\ast\mid \|\lambda\|=1\}/W)$.
	By compactness
	\[
		\|\Phi(\{\lambda\in\mf a_\C^\ast\mid \|\lambda\|=1\}/W)\|_\infty\geq c>0
	\]
	where we use the maximum norm on $\C^n$.
	Now for $\|\lambda\|\geq 1$:
	\[
		\|\Phi(\lambda \mod W)\|_\infty = \max |p_i(\lambda)| = \max \|\lambda\|^{d_i} |p_i(\lambda/\|\lambda\|)|\geq \|\lambda\|^{\min d_i} \max |p_i(\lambda/\|\lambda\|)|\geq c\|\lambda\|^2.
	\]

	For $\Phi(\lambda_n)\to \Phi(\lambda_0)$ it follows that $\|\lambda_n\|$ is bounded: 
	Indeed if $\limsup \|\lambda_n\|=\infty$ then $\infty = \limsup
	c\|\lambda_n\|^2 \leq \limsup \|\Phi(\lambda_n)\|_\infty \leq \|\Phi(\lambda_0)\|_\infty +1.$ 
	Therefore, $\lambda_n$ is contained in the bounded set $B=\{\lambda\in \mf a_\C^ \ast\mid \|\lambda\|\leq r\}$.
	But now $\Phi|_{B/W}\colon B/W\to \C^n$ is injective and continuous and since $B/W$ is compact it is a topological embedding.
	As $\lambda_n, \lambda_0\in B$ we infer $\lambda_n \mod W\to \lambda_0 \mod W$ and the lemma is proved.
\end{proof}

As before let $L^2(\G\backslash G)=\int_Z^\oplus \pi_z d\mu(z)$.
It is clear that $L^2(\G\backslash G/K)=L^2(\G\backslash G)^K =\int_{Z_{\on{sph}}}^\oplus \mc H_z^K d\mu(z)$.
For $z\in Z_{\on{sph}}$ the representation $\pi_z$ is unitary, irreducible, and spherical. 
By Section~\ref{sec:sphericalpos} $\pi_z\simeq \pi_{\phi_{\lambda_z }}$ 
for some $\lambda_z\in \mf a_\C^\ast/W$ 
such that $\phi_{\lambda_z}$
is of positive type.
This reflects that $\widetilde \sigma(\G\backslash G/K)$ is the set of spectral
parameters $\lambda$ occurring in $L^2(\G\backslash G/K)$.  
By definition of
$\pi_{\phi_{\lambda_z }}$ the differential operator $D\in \mathbb D(G/K)$ acts
by $\chi_{\lambda_z}(D)$ on $\mc H_z^K$.

We now aim to show the following proposition.
\begin{proposition}\label{prop:equivalencespectra}
	Let $D_1,\ldots,D_n$ be a generating set for $\mathbb D(G/K)$ consisting of symmetric operators such that their Harish-Chandra polynomials $\HC(D_i)$ are homogeneous. Then the following sets coincide:
	\begin{enumerate}
		\item $\widetilde \sigma(\G\backslash G/K)$ \label{itm:sigmaschlange}
		\item $\{\lambda\mid \,\forall\, D\in\mathbb D(G/K)\colon \chi_\lambda(D)\in \sigma(_\G D)\}$\label{itm:alloperators}
		\item $\{\lambda\mid \,\forall\, p\in Poly(\mf a_\C^\ast)^W \colon p(\lambda)\in \on{essran} [z\mapsto p(\lambda_z)]\}$\label{itm:allinvariantpolynomials}
		\item $\{\lambda\mid \,\forall\, p\in \C[x_1,\ldots, x_n]\colon p(\chi_\lambda(D_1), \ldots,\chi_\lambda(D_n))\in \sigma(_\G p(D_1,\ldots, D_n))\}$\label{itm:allpolynomials}
		\item $\{\lambda\mid (\chi_\lambda(D_1),\ldots, \chi_\lambda(D_n))\in \sigma_j(_\G D_1,\ldots,{}_\G D_n)\}$\label{itm:joint}
		\item $\{\lambda\mid {}_\G\sum_{i=1}^n (D_i-\chi_\lambda(D_i))^\ast(D_i-\chi_\lambda(D_i)) \text{ is not invertible}\}$\label{itm:box}
	\end{enumerate}
\end{proposition}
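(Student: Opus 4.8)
The plan is to prove the links \ref{itm:sigmaschlange} $\Rightarrow$ \ref{itm:alloperators}, then \ref{itm:alloperators} $\Leftrightarrow$ \ref{itm:allinvariantpolynomials} and \ref{itm:alloperators} $\Leftrightarrow$ \ref{itm:allpolynomials}, then \ref{itm:alloperators} $\Rightarrow$ \ref{itm:box}, and finally \ref{itm:box} $\Leftrightarrow$ \ref{itm:joint} $\Leftrightarrow$ \ref{itm:sigmaschlange}, which closes the cycle. Everything is read off the direct-integral decomposition $L^2(\G\backslash G/K)=\int^\oplus_{Z_{\on{sph}}}\mc H_z^K\,d\mu(z)$, in whose fibre over $z$ each $D\in\mathbb D(G/K)$ acts by the scalar $\chi_{\lambda_z}(D)=\HC(D)(\lambda_z)$; applying Lemma~\ref{la:spectrumdirectintegral} yields the identity $\sigma({}_\G D)=\on{essran}[z\mapsto\chi_{\lambda_z}(D)]$, and likewise $\sigma({}_\G p(D_1,\dots,D_n))=\on{essran}[z\mapsto p(\chi_{\lambda_z}(D_1),\dots,\chi_{\lambda_z}(D_n))]$ since $\chi_{\lambda_z}$ is an algebra homomorphism. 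This is the dictionary I would use throughout.

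The algebraic links I would dispatch first. \ref{itm:alloperators} $\Leftrightarrow$ \ref{itm:allinvariantpolynomials} is a mere translation via the Harish-Chandra isomorphism $D\leftrightarrow\HC(D)$ and the identity above, as $\HC(D)$ ranges over all of $Poly(\mf a_\C^\ast)^W$. \ref{itm:alloperators} $\Leftrightarrow$ \ref{itm:allpolynomials} holds because $\mathbb D(G/K)$ is commutative and generated by $D_1,\dots,D_n$, so $\mathbb D(G/K)=\{p(D_1,\dots,D_n)\mid p\in\C[x_1,\dots,x_n]\}$ and $\chi_\lambda(p(D_1,\dots,D_n))=p(\chi_\lambda(D_1),\dots,\chi_\lambda(D_n))$. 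For \ref{itm:sigmaschlange} $\Rightarrow$ \ref{itm:alloperators}: $\HC(D)$ descends to a continuous function on $\mf a_\C^\ast/W$, so for $\lambda\in\on{essran}[z\mapsto\lambda_z]$ and any $\eps>0$ the set $\{z:\chi_{\lambda_z}(D)\in B_\eps(\chi_\lambda(D))\}=\{z:\lambda_z\in\HC(D)^{-1}(B_\eps(\chi_\lambda(D)))\}$ has positive measure, whence $\chi_\lambda(D)\in\on{essran}[z\mapsto\chi_{\lambda_z}(D)]=\sigma({}_\G D)$. Finally, for \ref{itm:alloperators} $\Rightarrow$ \ref{itm:box}: the formal adjoint of a $G$-invariant operator is $G$-invariant, so $P_\lambda\coloneqq\sum_{i=1}^n(D_i-\chi_\lambda(D_i))^\ast(D_i-\chi_\lambda(D_i))\in\mathbb D(G/K)$, and since $\chi_\lambda$ is a unital homomorphism, $\chi_\lambda(D_i-\chi_\lambda(D_i))=0$ and hence $\chi_\lambda(P_\lambda)=0$; thus \ref{itm:alloperators} forces $0\in\sigma({}_\G P_\lambda)$, i.e. ${}_\G P_\lambda$ is not invertible.

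The substance is \ref{itm:box} $\Leftrightarrow$ \ref{itm:joint} $\Leftrightarrow$ \ref{itm:sigmaschlange}. First I would compute the fibre scalar of ${}_\G P_\lambda$: by the character relation $\chi_\lambda(D^\ast)=\ov{\chi_{-\ov\lambda}(D)}$ recalled above, in the fibre over $z$ it acts by $\sum_i\ov{\chi_{-\ov{\lambda_z}}(D_i-\chi_\lambda(D_i))}\,\chi_{\lambda_z}(D_i-\chi_\lambda(D_i))$; because only positive-type spherical functions occur, $-\ov{\lambda_z}\in W\lambda_z$ and therefore $\chi_{-\ov{\lambda_z}}=\chi_{\lambda_z}$, so this collapses to the genuine square modulus $f_\lambda(z)\coloneqq\sum_{i=1}^n|p_i(\lambda_z)-p_i(\lambda)|^2\ge 0$ with $p_i\coloneqq\HC(D_i)$. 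By Lemma~\ref{la:spectrumdirectintegral}, $\sigma({}_\G P_\lambda)=\on{essran} f_\lambda$, so \ref{itm:box} is equivalent to
\[
	(\star)\qquad \mu(\{z\in Z_{\on{sph}}:f_\lambda(z)<\eps\})>0\quad\text{for every }\eps>0.
\]
For $(\star)\Leftrightarrow$ \ref{itm:joint} I would use that $\sum_i\|({}_\G D_i-\chi_\lambda(D_i))x\|^2=\int f_\lambda(z)\|x_z\|^2\,d\mu(z)$ for $x=\int^\oplus x_z\,d\mu(z)$: under $(\star)$, normalized vectors supported on $\{f_\lambda<1/k\}$ lie in $\bigcap_i\dom({}_\G D_i)$ (each $|\chi_{\lambda_z}(D_i)|$ is bounded there) and form a joint approximate eigenvector for $(\chi_\lambda(D_1),\dots,\chi_\lambda(D_n))$; conversely any such sequence makes $\int f_\lambda\|x_{k,z}\|^2\to 0$, which forces $(\star)$. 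The equivalence $(\star)\Leftrightarrow$ \ref{itm:sigmaschlange} is where Lemma~\ref{la:polyhomeo} enters: after deleting any constant $D_i$ the $p_i$ are non-constant, homogeneous, $W$-invariant, and — since $D_1,\dots,D_n$ generate $\mathbb D(G/K)$, so the $p_i$ generate $Poly(\mf a_\C^\ast)^W$, which separates $W$-orbits in $\mf a_\C^\ast$ (interpolate a polynomial separating two orbits, then average over $W$) — they separate the points of $\mf a_\C^\ast/W$; hence $\Phi\coloneqq(p_1,\dots,p_n)$ embeds $\mf a_\C^\ast/W$ topologically into $\C^n$, so the open sets $U_\eps\coloneqq\{\nu\mod W:\sum_i|p_i(\nu)-p_i(\lambda)|^2<\eps\}$ form a neighbourhood basis of $\lambda\mod W$. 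Since $\{z:f_\lambda(z)<\eps\}=\{z:\lambda_z\in U_\eps\}$, condition $(\star)$ says precisely that $\mu(\{z:\lambda_z\in U\})>0$ for every open $U\ni\lambda\mod W$, i.e. $\lambda\in\on{essran}[z\mapsto\lambda_z]=\widetilde\sigma(\G\backslash G/K)$.

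I expect the crux to be the fibre computation behind $(\star)$, namely that ${}_\G P_\lambda$ acts in the fibre over $z$ by the \emph{honest} square modulus $\sum_i|p_i(\lambda_z)-p_i(\lambda)|^2$ rather than merely by $\sum_i\ov{\chi_{-\ov{\lambda_z}}(D_i-\chi_\lambda(D_i))}\,\chi_{\lambda_z}(D_i-\chi_\lambda(D_i))$: this uses essentially that $L^2(\G\backslash G)$ decomposes only into representations whose spherical parameter satisfies the positive-type constraint $-\ov{\lambda_z}\in W\lambda_z$, without which one loses the control of the geometry of $f_\lambda$ needed to feed Lemma~\ref{la:polyhomeo}. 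The other non-formal ingredient is Lemma~\ref{la:polyhomeo} itself, comparing the quotient topology on $\mf a_\C^\ast/W$ with the topology induced by the generating invariants; the remaining points — closure of $\mathbb D(G/K)$ under formal adjoints, that $Poly(\mf a_\C^\ast)^W$ separates $W$-orbits over $\C$, and that the approximate eigenvectors constructed above respect the operator domains — are routine.
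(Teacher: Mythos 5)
Your proposal is correct and follows essentially the same route as the paper: the same direct-integral dictionary from Lemma~\ref{la:spectrumdirectintegral} (each $D$ acting by $\chi_{\lambda_z}(D)$ on the fibre $\mc H_z^K$), the same cycle of inclusions closed through \ref{itm:box}, \ref{itm:joint} and \ref{itm:sigmaschlange}, and the same use of Lemma~\ref{la:polyhomeo} (with the separation of $W$-orbits by the generating invariants) to convert smallness of $\sum_i|\chi_{\lambda_z}(D_i)-\chi_\lambda(D_i)|^2$ into neighbourhoods of $\lambda \bmod W$. The only divergence is that the paper never computes the fibre scalar of ${}_\G\sum_i(D_i-\chi_\lambda(D_i))^\ast(D_i-\chi_\lambda(D_i))$: it passes from \ref{itm:box} to \ref{itm:joint} directly via the quadratic-form identity $\sum_i\|({}_\G D_i-\chi_\lambda(D_i))f\|^2=\langle {}_\G\sum_i(D_i-\chi_\lambda(D_i))^\ast(D_i-\chi_\lambda(D_i))f,f\rangle$, where the honest square modulus appears automatically from taking norms, so the positive-type constraint $-\ov{\lambda_z}\in W\lambda_z$ and the relation $\chi_\lambda(D^\ast)=\ov{\chi_{-\ov\lambda}(D)}$ that you single out as the crux are correct but dispensable.
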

\begin{proof}
	Clearly, \ref{itm:alloperators},\ref{itm:allinvariantpolynomials} and \ref{itm:allpolynomials} coincide by the Harish-Chandra isomorphism and Lemma~\ref{la:spectrumdirectintegral} and contain $\widetilde\sigma(\G\backslash G/K)$ by continuity of the polynomials $p\in Poly(\mf a_\C^\ast)^W$.
	Taking $p=\sum_{i=1}^n (x_i-\ov{\chi_\lambda(D_i)})(x_i-\chi_\lambda(D_i))$
	we see that \ref{itm:allpolynomials} is contained in \ref{itm:box}.
	To see that \ref{itm:box} is contained in \ref{itm:joint} we observe
	that an approximate eigenvector for the spectral value 0 for
	${}_\G\sum_{i=1}^n (D_i-\chi_\lambda(D_i))^\ast(D_i-\chi_\lambda(D_i))$ is an
	joint approximate eigenvector for all the ${}_\G D_i$ as 
	\[ 
		\sum_{i=1}^n \|(_\G D_i-\chi_\lambda(D_i))f\|^2= \langle{}_\G\sum_{i=1}^n (D_i-\chi_\lambda(D_i))^\ast(D_i-\chi_\lambda(D_i))f,f\rangle .
	\]
	It remains to show that \ref{itm:joint} is contained in $\widetilde \sigma(\G\backslash G/K)$.
	Let $f_n=\int_{Z_{\on{sph}}}^{\oplus} {f_{n,z}} \: d\mu(z)$ be a joint approximate eigenvector for ${}_\G D_1,\ldots,_\G D_n$ and $A_\eps \coloneqq \{z\mid \sum_{i=1}^n |\chi_{\lambda_z}(D_i)-\chi_{\lambda}(D_i)|^2<\eps\}$.
	Then
	\begin{align*}
		0\leftarrow &\sum \|(_\G D_i - \chi_\lambda(D_i))f_n\|^2 = \int_{Z_{\on{sph}}}^{} {\sum_{i=1}^{n} |\chi_{\lambda_z}(D_i)-\chi_\lambda(D_i)|^2 \|f_{n,z}\|^2} \: d\mu(z)\\
			    &\geq \int_{Z_{\on{sph}}\setminus A_\eps}^{} {\eps \|f_{n,z}\|^2} \: d\mu(z) 
	\end{align*}
	but the last expression equals $\eps$ if $\mu(A_\eps)=0$. 
	Hence, $A_\eps$ has positive measure for all $\eps>0$.
	By Lemma~\ref{la:polyhomeo} the preimage of a neighborhood in $\mf a_\C^\ast/W$ of $\lambda$ under $z\mapsto \lambda_z$ contains $A_\eps$ for some $\eps>0$ and therefore has positive measure as well.
	It follows $\lambda\in \widetilde\sigma(\G\backslash G/K)$.
	This completes the proof.
\end{proof}

We now prove that $\widetilde \sigma(\Gamma\backslash G/K)$ contains $i\mf a^\ast$ if the injectivity radius is infinite.
\begin{proposition}
	\label{prop:spectrumcontainsimaginary}
	Suppose that the injectivity radius of $\G\backslash G/K$ is infinite, 
	i.e. for every compact set $C\subseteq G/K$ there is $g\in G$ such that $G/K\to \G\backslash G/K$ restricted to $gC$ is injective.
	Then $i\mf a^\ast\subseteq \widetilde \sigma(\G\backslash G/K)$.
	In particular, $[\|\rho\|^2,\infty[\:\subseteq \sigma(_\G \Delta)$.
\end{proposition}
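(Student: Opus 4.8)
The plan is to reduce the claim $i\mf a^\ast \subseteq \widetilde\sigma(\G\backslash G/K)$ to the construction, for every fixed $\lambda \in i\mf a^\ast$ and every $\eps>0$, of a single unit vector $f \in L^2(\G\backslash G/K)$ with $\norm{{}_\G D f - \chi_\lambda(D)f} < \eps$ simultaneously for a finite generating set $D_1,\dots,D_n$ of $\mathbb D(G/K)$; by the equivalence of the joint spectrum with $\widetilde\sigma$ in Proposition~\ref{prop:equivalencespectra} (characterization \ref{itm:joint}), this places $\lambda$ in $\widetilde\sigma(\G\backslash G/K)$. The natural candidate for an approximate joint eigenfunction with parameter $\lambda$ on $G/K$ is a truncated version of the generalized eigenfunction $e_{\lambda, kM}(gK) = e^{-(\lambda+\rho)H(g\inv k)}$, or more symmetrically the elementary spherical function $\phi_\lambda$. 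Since $\lambda \in i\mf a^\ast$, $\phi_\lambda$ is bounded but not $L^2$ on $G/K$, so I would multiply it by a smooth cutoff $\chi_R$ supported in a ball $B_R$ of radius $R$, equal to $1$ on $B_{R/2}$, with derivatives bounded independently of $R$; rescaling by $\norm{\chi_R \phi_\lambda}_{L^2}^{-1}$ gives a unit vector $f_R$ on $G/K$.

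The estimate on $G/K$ is then the standard Rellich-type computation: $D(\chi_R\phi_\lambda) - \chi_\lambda(D)\chi_R\phi_\lambda$ is supported in the annulus $B_R\setminus B_{R/2}$ (where $\chi_R$ is non-constant) and is a sum of terms involving derivatives of $\chi_R$ times derivatives of $\phi_\lambda$, all controlled on that annulus. Meanwhile the normalizing factor $\norm{\chi_R\phi_\lambda}_{L^2}^2 \geq \int_{B_{R/2}} |\phi_\lambda|^2$ grows (it tends to $\infty$ as $R\to\infty$, since $\phi_\lambda$ for $\lambda \in i\mf a^\ast$ decays only polynomially and $B_R$ has exponential volume, so in fact the $L^2$-mass over the annulus is comparable to that over $B_{R/2}$ — one should be a little careful here and instead use a cutoff that grows slowly, e.g. $\chi_R$ supported on $[0,R]$ and linear on $[R-1,R]$ in the radial variable, so that the "bad" annulus has fixed width while the support grows). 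With such a choice the ratio $\norm{D(\chi_R\phi_\lambda)-\chi_\lambda(D)\chi_R\phi_\lambda}_{L^2}^2 / \norm{\chi_R\phi_\lambda}_{L^2}^2$ tends to $0$ as $R\to\infty$ for each of the finitely many generators $D_i$, giving approximate joint eigenfunctions on $G/K$ of parameter $\lambda$.

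To transfer this to $\G\backslash G/K$ I would use the infinite injectivity radius hypothesis: choose a compact set $C \supseteq \overline{B_R}$ and $g\in G$ so that the projection $\pi\colon G/K \to \G\backslash G/K$ is injective on $gC$; then $f_R$ translated to sit inside $gC$ pushes forward isometrically to a unit vector in $L^2(\G\backslash G/K)$, and since the $D_i$ are $G$-invariant the approximate-eigenfunction estimate is preserved verbatim. This exhibits $(\chi_\lambda(D_1),\dots,\chi_\lambda(D_n))$ as a joint approximate eigenvalue, hence $\lambda\in\widetilde\sigma(\G\backslash G/K)$; as $\lambda\in i\mf a^\ast$ was arbitrary we get $i\mf a^\ast\subseteq\widetilde\sigma(\G\backslash G/K)$. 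The final sentence follows by taking $D=\Delta$: by \eqref{eq:defspectrum}/characterization \ref{itm:alloperators} of Proposition~\ref{prop:equivalencespectra}, $\chi_\lambda(\Delta)=-\langle\lambda,\lambda\rangle+\langle\rho,\rho\rangle\in\sigma({}_\G\Delta)$ for all $\lambda\in i\mf a^\ast$, and as $\lambda$ ranges over $i\mf a^\ast$ the quantity $-\langle\lambda,\lambda\rangle$ ranges over $[0,\infty[$, so $[\norm{\rho}^2,\infty[\subseteq\sigma({}_\G\Delta)$. The main obstacle I anticipate is the bookkeeping in the Rellich estimate — specifically choosing the cutoff so that the $L^2$-mass lost to the transition region is negligible compared with the total mass, which requires knowing that $\phi_\lambda$ does not decay too fast (true for $\lambda\in i\mf a^\ast$) together with the volume growth of balls in $X$; everything else is routine.
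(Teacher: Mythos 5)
Your proposal is correct, but the way you construct the compactly supported approximate joint eigenfunctions on $G/K$ differs genuinely from the paper's. Both proofs share the same skeleton: produce, for each $\lambda\in i\mf a^\ast$, unit vectors $f_n\in C_c^\infty(G/K)$ with $\|(D_i-\chi_\lambda(D_i))f_n\|_{L^2}\to 0$ for a finite generating set of $\mathbb D(G/K)$, translate them by elements $g_n\in G$ chosen via the infinite-injectivity-radius hypothesis so that $g_n\operatorname{supp}f_n$ injects into $\Gamma\backslash G/K$, observe that the pushforward is isometric and commutes with the $G$-invariant $D_i$, and invoke Proposition~\ref{prop:equivalencespectra}. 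The paper obtains the $f_n$ abstractly: since $i\mf a^\ast=\widetilde\sigma(G/K)$ by the Plancherel theorem, Proposition~\ref{prop:equivalencespectra} applied to $G/K$ itself gives $L^2$ approximate eigenvectors; they then adjoin the auxiliary generator $D_{n+1}=(\Delta-\|\rho\|^2)^k$ with $k$ large so that the single operator $\sum_{i=1}^{n+1}(D_i-\chi_\lambda(D_i))^\ast(D_i-\chi_\lambda(D_i))$ is elliptic, hence (being positive) essentially self-adjoint on $C_c^\infty(G/K)$, which lets them replace the $L^2$ approximate eigenvectors by compactly supported ones with no analysis of spherical-function asymptotics. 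Your construction instead takes $\chi_R\phi_\lambda$ with a cutoff whose transition region has fixed width and runs a Rellich estimate; this works, but, as you anticipate, it requires the Harish-Chandra asymptotics of $\phi_\lambda$ and its derivatives together with the volume growth of $X$ to see that $\|\chi_R\phi_\lambda\|_{L^2}^2\asymp R^{\operatorname{rk}X}$ while the annulus contributes only $O(R^{\operatorname{rk}X-1})$, with a minor additional check near Weyl walls and for singular $\lambda$ where the asymptotics pick up polynomial or logarithmic corrections. In short, the paper trades your explicit asymptotic bookkeeping for the ellipticity/essential-self-adjointness trick (which is exactly why the extra generator $D_{n+1}$ appears in their proof); your route is more hands-on but needs more classical input on $\phi_\lambda$. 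The deduction of $[\|\rho\|^2,\infty[\subseteq\sigma({}_\G\Delta)$ at the end is the same in both.
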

\begin{proof}
	The proof follows the same idea as \cite[Prop.~8.4]{EdwardsOh22}.
	Let $\lambda\in i \mf a^\ast = \widetilde \sigma(G/K)$. 
	We choose a generating set $D_1,\ldots,D_n$ for $\mathbb{D}(G/K)$ consisting of symmetric operators such that $\HC(D_i)$ are homogeneous. 
	Let $D_{n+1}= (\Delta -\|\rho\|^2)^k$ for $k$ large such that the order of $D_{n+1}$ is bigger than all the orders of $D_1,\ldots,D_n$.
	Denote the elliptic operator $\sum_{i=1}^{n+1} (D_i-\chi_\lambda(D_i))^\ast(D_i-\chi_\lambda(D_i)) $ by $D$.
	By Proposition~\ref{prop:equivalencespectra} there exists $(f_n)_n\subset L^2(G/K)$ with $\|f_n\|_{L^2(G/K)}=1$ and $D f_n \to 0$.
	Since $D$ is elliptic and positive it is essentially self-adjoint on $C_c^\infty(G/K)$.
	In particular, we can assume that $f_n\in C_c^\infty(G/K)$.
	We can now find $g_n\in G$ such that $g_n \on{supp}f_n$ injects into $\Gamma\backslash G/K$.
	Define $\widetilde f_n(\G x)=f_n(g_n\inv x)$ for $x\in g_nK_n$ and $\widetilde f_n(\G x)=0$ else.
	By construction this is well-defined and $\|\widetilde f_n\|_{L^2(\G\backslash G/K)} =\|f_n\|_{L^2(G/K)}$.
	Moreover, $\|	_\G D\widetilde f_n\|_{L^2(\G\backslash G/K)} = \|Df_n\|_{L^2(G/K)}\to 0.$
	This shows $\lambda\in \widetilde \sigma(\Gamma\backslash G/K)$.
	The 'in particular' part follows from Proposition~\ref{prop:equivalencespectra} \ref{itm:alloperators} and $\chi_\lambda(\Delta)=-\langle \lambda,\lambda\rangle + \|\rho\|^2$.
\end{proof}

\begin{remark}
	The assumption in Proposition~\ref{prop:spectrumcontainsimaginary} is satisfied for the following examples:
	\begin{enumerate}
		\item If $G=SL_2(\R)$ and $\Gamma$ is geometrically finite, then infinite injectivity radius is equivalent to infinite volume which is again equivalent to saying that $\Gamma\backslash \mathbb{H}$ has at least one funnel.
		\item If $G$ is simple of real rank at least $2$, then a discrete subgroup $\Gamma\backslash G/K$ has infinite injectivity radius iff $\Gamma$ has infinite covolume by \cite{GelanderInjectivity}. 
		\item If $\Gamma\leq G$ is an Anosov subgroup, then $\Gamma\backslash G/K$ has infinite injectivity radius \cite[Proposition~8.3]{EdwardsOh22}. 
	\end{enumerate}
\end{remark}

\subsection{Temperedness of \texorpdfstring{$L^2(\G\backslash G)$}{L2(Gamma \textbackslash G)}}
\label{sec:temperedness}
We want to obtain a connection between the spectrum and temperedness of $L^2(\G\backslash G)$.
Let us recall the definition of a tempered representation.

\begin{definition}[{see e.g. \cite{Haagerup1988}}]
	\label{def:tempered}
	A unitary representation $(\pi,\mc H_\pi)$ is called \emph{tempered} if one of the following equivalent conditions is satisfied:
	\begin{enumerate}
		\item $\pi$ is weakly contained in $L^ 2(G)$, i.e. 
			any diagonal matrix coefficients of $\pi$ can be approximated, uniformly on compact sets, by convex combinations of diagonal matrix coefficients of $L^2(G)$. 
		\item for any $\eps>0$ the representation $\pi$ is strongly $L^{2+\varepsilon}$ where 
			$\pi$ is called strongly $L^{p}$ if there is a dense subspace $D$ of $\mc H_\pi$  so that for any vectors $v,w\in D$ the matrix coefficient $g\mapsto \langle\pi(g) v,w\rangle$ lies in $L^ {p}(G)$.
	\end{enumerate}
\end{definition}

To characterize temperedness of $L^2 (\G\backslash G)$ we will use the direct integral decomposition (see Section~\ref{sec:planchrelspectrum}).

We will prove the following statement.
\begin{proposition}
	\label{prop:temperednessfromspectrum}
	Suppose that $\widetilde \sigma(\Gamma\backslash G/K) \subseteq \frac{p-2}p \on{conv}(W\rho)$ for some $p\in [2,\infty[$.
	Then $L^ 2(\G\backslash G)$ is strongly $L^{p+\varepsilon}$.
	In particular, $L^2(\Gamma\backslash G)$ is tempered if $\widetilde\sigma(\Gamma\backslash G/K) \subseteq i\mf a^\ast/W$.
\end{proposition}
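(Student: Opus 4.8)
The plan is to exploit the direct integral decomposition $L^2(\G\backslash G) = \int_Z^\oplus \pi_z\, d\mu(z)$ together with the correspondence between spherical representations and elementary spherical functions of positive type from Section~\ref{sec:sphericalpos}. The key input is a quantitative estimate on spherical functions: Harish-Chandra's / Anker's estimates say that $\phi_\lambda$ lies in $L^{q}(G)$ for a range of $q$ determined by how far $\Re\lambda$ sits inside $\conv(W\rho)$. Concretely, if $\Re\lambda \in \tfrac{p-2}{p}\conv(W\rho)$, then the associated matrix coefficient $g\mapsto \langle\pi_{\phi_\lambda}(g)v_K, v_K\rangle = \phi_\lambda(g)$ (up to the notational factor of $i$) is in $L^{p+\eps}(G)$ for every $\eps>0$; this is the standard $L^p$-bound for spherical functions, strictest on the $K$-fixed vector. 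Since the hypothesis $\widetilde\sigma(\G\backslash G/K)\subseteq\tfrac{p-2}{p}\conv(W\rho)$ says exactly that $\Re\lambda_z$ lies in this set for $\mu$-a.e.\ $z\in Z_{\on{sph}}$ (using the essential-range characterization recalled before Lemma~\ref{la:spectrumdirectintegral}), each fiber $\pi_z$ with $\mathcal H_z^K\neq 0$ is strongly $L^{p+\eps}$.

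The second step is to promote this fiberwise statement to the whole representation. First one must handle the non-spherical fibers $z\notin Z_{\on{sph}}$: here the point is that any irreducible unitary representation of $G$ occurring in $L^2(\G\backslash G)$ is automatically tempered, or more to the point, one restricts attention to the subrepresentation generated under $G$ by $K$-finite vectors and uses that temperedness of a unitary representation can be tested on a dense subspace — and $L^2(\G\backslash G)$ is generated as a $G$-representation by its $K$-invariant-on-the-right part in the relevant sense via the $K$-types. More carefully: it suffices to produce, for each $K$-type $\tau$, vectors whose matrix coefficients are in $L^{p+\eps}$; by the theory of the spherical principal series and the fact that a representation with a vector of type $\tau$ whose "spherical part" has $L^p$-decay is itself strongly $L^p$ on the $\tau$-isotypic vectors (the $K$-fixed vector governs the worst-case decay among all $K$-types of a given spherical representation). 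Then I would assemble: take $D$ to be the span of $\int_Z^\oplus v_z\, d\mu(z)$ with each $v_z$ ranging over a suitable dense, uniformly $L^{p+\eps}$-controlled subspace of $\mathcal H_z$; for $v,w\in D$ the matrix coefficient $g\mapsto\int_Z\langle\pi_z(g)v_z,w_z\rangle\,d\mu(z)$ is dominated, via Cauchy–Schwarz and Minkowski's integral inequality, by an $L^{p+\eps}$ function, using the uniform bound $\|\phi_{\lambda_z}\|_{L^{p+\eps}(G)}\leq C_{p,\eps}$ valid for all $\lambda_z$ with $\Re\lambda_z\in\tfrac{p-2}{p}\conv(W\rho)$ (the constant is uniform because $\conv(W\rho)$ is compact and the $L^p$-bound is locally uniform in the parameter).

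For the "in particular" clause: if $\widetilde\sigma(\G\backslash G/K)\subseteq i\mathfrak a^\ast/W$, then $\Re\lambda_z = 0$ for a.e.\ $z$, which is the case $p=2$, i.e.\ the spherical fibers are already in $L^{2+\eps}$; the same assembly shows $L^2(\G\backslash G)$ is strongly $L^{2+\eps}$ for every $\eps>0$, which is precisely condition (ii) of Definition~\ref{def:tempered}.

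**Main obstacle.** The hard part is the second step — passing from the fiberwise $L^{p+\eps}$ estimate on the $K$-fixed matrix coefficient of each spherical fiber to a strong-$L^{p+\eps}$ statement for the full direct integral, including controlling \emph{all} $K$-types and the non-spherical fibers, and verifying that the constants in the spherical-function $L^p$-estimate are uniform over the compact parameter region $\tfrac{p-2}{p}\conv(W\rho)$. Making the "the $K$-fixed vector is the worst case" heuristic precise (so that a spherical representation whose spherical matrix coefficient is strongly $L^{p+\eps}$ is itself strongly $L^{p+\eps}$ on a dense subspace, with uniform control) is the technical heart; this is where one invokes, e.g., the structure of the spherical principal series and Cowling–Haagerup–Howe-type criteria, or a direct estimate on $K$-finite matrix coefficients in terms of $\phi_{\Re\lambda}$.
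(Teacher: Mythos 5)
Your first half matches the paper: the hypothesis, read through the essential-range characterization, gives $\lambda_z\in\frac{p-2}{p}\on{conv}(W\rho)$ for $\mu$-a.e.\ $z\in Z_{\on{sph}}$, the spherical fiber acts on its one-dimensional $K$-fixed line through $\phi_{\lambda_z}$, and the uniform bound $\int_G|\phi_{\lambda_z}|^{p+\eps}\,dg\leq C_{\eps,p}$ over the compact parameter set (the paper cites \cite[Prop.~2.4]{qcchigherrank}) is exactly the input the paper uses. But the step you yourself flag as the ``main obstacle'' --- passing from the spherical fibers to all $K$-types and to the non-spherical fibers --- is where your proposal has a genuine gap, and the route you sketch for it would not go through as stated. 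The assertion that irreducible representations occurring in $L^2(\G\backslash G)$ are ``automatically tempered'' is false in general (non-tempered non-spherical representations do occur, e.g.\ for lattices), and the hypothesis of the proposition constrains only the spherical part of the decomposition, so nothing controls the non-spherical fibers fiberwise; a Cowling--Haagerup--Howe or leading-exponent argument for $K$-finite matrix coefficients, besides being left unproved, would still not touch those fibers. That they cause no harm is a \emph{consequence} of the proposition, not an available input.

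The paper avoids all of this with one elementary domination trick that your proposal is missing: since strong $L^{p+\eps}$ may be tested on the dense subspace $C_c(\G\backslash G)$, take $f_1,f_2\in C_c(\G\backslash G)$ non-negative and set $F_i(\G h)=\max_{k\in K}|f_i(\G hk)|$. Then $F_i\in C_c(\G\backslash G)$ is right-$K$-invariant and $|\langle R(g)f_1,f_2\rangle|\leq\langle R(g)F_1,F_2\rangle$ pointwise in $g$. This reduces the whole problem to $K$-invariant vectors, for which the direct integral runs only over $Z_{\on{sph}}$ and the only matrix coefficients appearing are $\phi_{\lambda_z}(g)\langle F_{1,z},F_{2,z}\rangle$; a H\"older estimate combined with the uniform bound $C_{\eps,p}$ then gives
\[
\int_G|\langle R(g)f_1,f_2\rangle|^{p+\eps}\,dg\leq C_{\eps,p}\,\|F_1\|_{L^2}^{p+\eps}\|F_2\|_{L^2}^{p+\eps}<\infty ,
\]
with no representation-theoretic comparison of $K$-types and no discussion of non-spherical fibers needed. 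To repair your argument you should replace the entire second step by this reduction (or supply a complete substitute that genuinely handles the non-spherical part, which your current sketch does not).
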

\begin{proof}
	Let $\eps>0$ and $f_1,f_2\in C_c(\G\backslash G)$ non-negative.
	We have to show that $\int_G |\langle R(g) f_1,f_2\rangle|^{p+\eps}dg$ is finite.
	Obviously, $\langle R(g)f_1,f_2\rangle = \int_{\G\backslash G} f_1(\G hg)f_2(\G h)d \G h$ is bounded by
	$\langle R(g) F_1,F_2\rangle$ where $F_i(\G h) = \max_{k\in K} |f_i(\G hk)|$.
	Hence, it is sufficent to show $\int_G |\langle R(g) f_1,f_2\rangle|^{2+\eps}dg < \infty$ for $K$-invariant $f_1,f_2$.
	We decompose $f_i$ in the direct integral decomposition as $f_i = \int_Z^\oplus f_{i,z} d\mu(z)$. 
	Since we assumed $f_i$ to be $K$-invariant
	we know that $f_{i,z} \in \mc H_z^K$ for $\mu$-a.e. $z\in Z$.
	It follows that we have to integrate only over $Z_{\on{sph}}$.

	For $z\in Z_{\on{sph}}$ the representation $\pi_z$ is unitary, irreducible, and spherical. 
	By Section~\ref{sec:sphericalpos} $\pi_z\simeq \pi_{\phi_{\lambda_z }}$ 
	for some $\lambda_z\in \mf a_\C^\ast$ 
	such that $\phi_{\lambda_z}$
	is of positive type.
	We also have $\langle \pi_z(g)f_{1,z},f_{2,z}\rangle = \phi_{\lambda_z}(g) \cdot \langle f_{1,z},f_{2,z}\rangle $.
	By assumption, $\lambda_z\in \frac{p-2}{p}\on{conv}(W\rho)$ for a.e. $z\in Z_{\on{sph}}$.
	This implies that $\phi_{\lambda_z} \in L^{p+\varepsilon}(G)$ by
	\cite[Prop.~2.4]{qcchigherrank} and even $\int_{G}^{}
	{|\phi_{\lambda_z}|^{p+\varepsilon}} \: dg\leq C_{\varepsilon,p}$ for
	$\mu$-a.e. $z\in Z_{\on{sph}}$ with $C_{\varepsilon,p}$ independent of $z$.

	Now we estimate
	\begin{align*}
		\int_G |\langle R(g) f_1,f_2\rangle|^{p+\eps}dg&\leq 
		\int_G \left(\int_{Z_{\on{sph}}} |\langle \pi_z(g) f_{1,z},f_{2,z}\rangle|d\mu(z)\right)^{p+\eps}dg \\
		&= \int_G \left(\int_{Z_{\on{sph}}} | \phi_{\lambda_z}(g) \langle f_{1,z},f_{2,z}\rangle|d\mu(z)\right)^{p+\eps}dg.  
	\end{align*}
	Using Hölder's inequality we find that 
	\begin{align*}
		\int_{Z_{\on{sph}}} |& \phi_{\lambda_z}(g) \langle f_{1,z},f_{2,z}\rangle|d\mu(z) 
		= \int_{Z_{\on{sph}}} | \phi_{\lambda_z}(g)| |\langle f_{1,z},f_{2,z}\rangle|^{\frac{1}{p+\eps}} |\langle f_{1,z},f_{2,z}\rangle|^{1/q} d\mu(z) \\
				     &\leq \left( \int_{Z_{\on{sph}}} |\phi_{\lambda_z}(g)|^{p+\varepsilon}|\langle f_{1,z},f_{2,z}\rangle|d\mu(z) \right)^{\frac{1}{p+\varepsilon}} \cdot \left(\int_{Z_{\on{sph}}} | \langle f_{1,z},f_{2,z}\rangle| d\mu(z)\right)^{1/q}.
	\end{align*}
where  $\frac{1}{p+\varepsilon}+\frac{1}{q}=1$.

	Therefore,
	\begin{align*}
		\int_G |\langle R(g) f_1,f_2\rangle|^{p+\eps}dg\leq \int_G \int_{Z_{\on{sph}}} |\phi_{\lambda_z}(g)|^{p+\eps} |\langle f_{1,z},f_{2,z}\rangle|d\mu(z) \cdot \left ( \int_{Z_{\on{sph}}}  |\langle f_{1,z},f_{2,z}\rangle|d\mu(z)\right)^{\frac{p+\varepsilon}{q}} dg.
	\end{align*}
	  Using $\int_{G}^{}
	{|\phi_{\lambda_z}|^{p+\varepsilon}} \: dg\leq C_{\varepsilon,p}$ it follows
	\begin{align*}
		\int_G &|\langle R(g) f_1,f_2\rangle|^{p+\eps}dg\leq C_{\eps,p} \int_{Z_{\on{sph}}}  |\langle f_{1,z},f_{2,z}\rangle|d\mu(z) \cdot \left ( \int_{Z_{\on{sph}}}  |\langle f_{1,z},f_{2,z}\rangle|d\mu(z)\right)^{\frac{p+\varepsilon}{q}} \\
		       &\leq C_{\eps,p} \left ( \int_{Z_{\on{sph}}}  |\langle f_{1,z},f_{2,z}\rangle|d\mu(z)\right)^{p+\eps} 
		       \leq C_{\eps,p} \left ( \int_{Z_{\on{sph}}}  \| f_{1,z}\|^ 2 d\mu(z) \int_{Z_{\on{sph}}} \|f_{2,z}\|^2 d\mu(z)\right)^{p+\eps/2} \\
		       &\leq C_{\eps,p} \|f_1\|_{L^2(\G\backslash G)}^{p+\eps}\|f_2\|_{L^2(\G\backslash G)}^{p+\eps}<\infty.
	\end{align*}
	This completes the proof.
\end{proof}

\section{The spectrum for quotients of products of rank one space}
\label{sec:spectrumonproduct}
	\subsection{The resolvent kernel on a locally symmetric space}
	In this subsection we determine the Schwartz kernel of the resolvent on a locally symmetric space in terms of its Schwartz kernel on the global space $G/K$.
	To do this we need the following well-known lemma.
	\begin{lemma}
		The averaging map $\alpha\colon C_c^\infty(G/K)\to C_c^\infty(\G\backslash G/K)$ defined by
		\[
			\alpha f(\G x) = \sum_{\g\in\G} f(\g  x), \quad x\in G/K,
		\]
		is surjective.
	\end{lemma}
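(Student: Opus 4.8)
The plan is to construct, for a given $h\in C_c^\infty(\G\backslash G/K)$, an explicit preimage by pushing $h$ up to $G/K$ against a suitable cutoff and then dividing by the fiber count. Concretely, I would first choose a function $\psi\in C_c^\infty(G/K)$ that is non-negative and satisfies $\sum_{\g\in\G}\psi(\g x)>0$ for all $x\in G/K$; such a $\psi$ exists because $\G$ is discrete, so the quotient map $p\colon G/K\to\G\backslash G/K$ is a covering and one can take $\psi$ supported in a small ball around a point of each $\G$-orbit and sum a locally finite family, or more simply take any bump function with $p(\supp\psi)=\G\backslash G/K$ on a fundamental domain's closure and average. The key point is that $\Psi\coloneqq\alpha\psi=\sum_{\g\in\G}\psi(\g\,\bigcdot\,)$ is a strictly positive smooth function on $\G\backslash G/K$, and the sum is locally finite hence smooth, because $\G$ acts properly discontinuously.

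Next I would set $f(x)\coloneqq \psi(x)\cdot\dfrac{h(p(x))}{\Psi(p(x))}$ for $x\in G/K$. This is well-defined and smooth since $\Psi$ is smooth and strictly positive and $h\circ p$ is smooth; it is compactly supported because $\supp f\subseteq\supp\psi$ which is compact in $G/K$; and it is right-$K$-invariant in the appropriate sense, i.e. descends correctly as a function on $G/K$, because both $\psi$ and $h\circ p/\Psi\circ p$ are. Then I would compute
\[
\alpha f(\G x)=\sum_{\g\in\G}f(\g x)=\sum_{\g\in\G}\psi(\g x)\,\frac{h(p(\g x))}{\Psi(p(\g x))}=\frac{h(p(x))}{\Psi(p(x))}\sum_{\g\in\G}\psi(\g x)=h(\G x),
\]
using that $p(\g x)=p(x)$ and $\Psi(p(x))=\sum_{\g\in\G}\psi(\g x)$. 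This exhibits $h$ as $\alpha f$, proving surjectivity.

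The only genuine point requiring care — and the step I expect to be the main obstacle, though it is standard — is the construction of $\psi$ with $\alpha\psi>0$ everywhere together with the verification that $\alpha\psi$ is smooth, i.e. that the sum $\sum_{\g\in\G}\psi(\g x)$ is locally finite. Both follow from the fact that $\G$ acts properly discontinuously on $G/K$ (since $\G$ is discrete and $K$ is compact): for any compact $C\subseteq G/K$ only finitely many $\g\in\G$ satisfy $\g C\cap\supp\psi\neq\emptyset$, so near any point the sum has finitely many nonzero terms and is therefore $C^\infty$; and strict positivity is arranged by choosing $\psi$ so that its $\G$-translates cover $G/K$, which is possible because $p$ is surjective and open. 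Once $\psi$ is in hand, everything else is the elementary algebraic manipulation above, and in particular no hypothesis beyond discreteness of $\G$ is needed.
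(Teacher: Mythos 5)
Your overall strategy (cut off against a bump and divide by the averaged bump) is the standard argument, and the paper itself gives no proof, citing the lemma as well-known; but as written your first step is not just delicate, it is impossible in the setting of the paper. If $\psi\in C_c^\infty(G/K)$, then $\alpha\psi$ is supported in the image of $\supp\psi$ under the projection $p\colon G/K\to\G\backslash G/K$, which is compact; hence $\Psi=\alpha\psi$ can be strictly positive on all of $\G\backslash G/K$ only when the quotient is compact. The paper is concerned precisely with non-compact (infinite volume, even infinite injectivity radius) quotients, so no such $\psi$ exists. Your suggested remedies do not repair this: a bump with $p(\supp\psi)=\G\backslash G/K$ cannot exist for the same reason, and summing a locally finite family of bumps meeting every orbit produces a $\psi$ that is no longer compactly supported, in which case $f=\psi\cdot(h\circ p)/(\Psi\circ p)$ need not have compact support, since $p\inv(\supp h)$ is a non-compact union of $\G$-translates and $\supp\psi$ may meet all of them.

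The fix is local to the given $h$ and easy. Since $\G$ is discrete and $K$ compact, $\G$ acts properly discontinuously (and, being torsion-free, freely), so $p$ is a covering map; cover the compact set $\supp h$ by finitely many open sets that are homeomorphic images of open sets upstairs and obtain a compact $C\subseteq G/K$ with $p(C)\supseteq\supp h$. Choose $\psi\in C_c^\infty(G/K)$, $\psi\geq 0$, with $\psi>0$ on $C$; then $\Psi=\alpha\psi$ is smooth (the sum is locally finite by proper discontinuity, as you say) and $\Psi>0$ on the open set $\{\Psi>0\}\supseteq p(C)\supseteq\supp h$. Define $u=h/\Psi$ on $\{\Psi>0\}$ and $u=0$ off $\supp h$; these agree on the overlap, so $u\in C_c^\infty(\G\backslash G/K)$ with $\supp u\subseteq\supp h$. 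Now $f\coloneqq\psi\cdot(u\circ p)\in C_c^\infty(G/K)$ and $\alpha f(\G x)=u(p(x))\,\Psi(p(x))$, which equals $h(p(x))$ where $\Psi(p(x))>0$ and equals $0=h(p(x))$ where $\Psi(p(x))=0$, since $\supp h\subseteq\{\Psi>0\}$. With this modification your argument is complete and needs nothing beyond discreteness of $\G$.
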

	Let us recall that for $D\in \mathbb{D}(G/K)$ we defined the differential operator ${}_\Gamma D$ acting on $L^2(\Gamma\backslash G/K)$.
	The following lemma tells us how the Schwartz kernel of ${}_\Gamma D\inv$ can be expressed provided $D$ is invertible.
	\begin{lemma}\label{la:resolventkernellocally}
		Let $D\in \mathbb D(G/K)$ and suppose that $D$ is invertible as an unbounded operator $L^2(G/K)\to L^2(G/K)$.
		Let $K_{D\inv}\in \mc D'(G/K\times G/K)$ be the Schwartz kernel of $D\inv$.
		Suppose further that ${}_\G D\colon L^2(\Gamma\backslash G/K)\to L^2(\Gamma\backslash G/K)$ is invertible.
		Then the Schwartz kernel $K_{{}_\G D\inv}\in \mc D'(\G\backslash G/K \times
		\G\backslash G/K)$ of ${}_\G D\inv$ is given by 
		\[K_{{}_\G D\inv}(\varphi\otimes \psi) = \sum_{\g\in\G} K_{D\inv}(L_\g \tilde \varphi\otimes \tilde \psi),
		\]
		where $\tilde \varphi$ (and $\tilde \psi)$ are preimages of $\varphi$ (resp. $\psi$) under the surjective map $\alpha\colon C_c^\infty(G/K)\to C_c^\infty(\G\backslash G/K)$.
		By slight abuse of notation we write
		$$K_{{}_\G D\inv}(\G x,\G y) =
		\sum_{\g\in\G} K_{D\inv}(x,\gamma y).$$ 
	\end{lemma}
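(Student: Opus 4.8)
The plan is to reduce the identity for $K_{{}_\G D\inv}$ to the corresponding identity for the kernel of $D\inv$ on $G/K$ by pulling everything back along the averaging map $\alpha\colon C_c^\infty(G/K)\to C_c^\infty(\G\backslash G/K)$, and to verify that the proposed formula is well-defined, i.e. independent of the chosen preimages. First I would observe that since $D\in\mathbb D(G/K)$ is $G$-invariant, it intertwines $\alpha$ with ${}_\G D$: for $f\in C_c^\infty(G/K)$ one has $\alpha(Df)={}_\G D(\alpha f)$, because $D$ commutes with left translations by all of $G$ and hence with the sum over $\G$. Dualizing, if $K_{D\inv}$ is the Schwartz kernel of $D\inv$ on $G/K$ and $\tilde\varphi,\tilde\psi\in C_c^\infty(G/K)$ are $\alpha$-preimages of $\varphi,\psi\in C_c^\infty(\G\backslash G/K)$, I would define $\mathcal K(\varphi\otimes\psi)\coloneqq \sum_{\g\in\G}K_{D\inv}(L_\g\tilde\varphi\otimes\tilde\psi)$ and show that this quantity depends only on $\varphi$ and $\psi$.

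For well-definedness, suppose $\alpha\tilde\varphi=\alpha\tilde\varphi'=\varphi$. The difference $h\coloneqq\tilde\varphi-\tilde\varphi'$ satisfies $\alpha h=0$. The key point is that $\sum_{\g\in\G}L_\g h$ vanishes as a distribution on $G/K$ (the sum is locally finite since $h$ has compact support and $\G$ acts properly discontinuously), hence $\sum_{\g}K_{D\inv}(L_\g h\otimes\tilde\psi)=K_{D\inv}\big(\big(\sum_\g L_\g h\big)\otimes\tilde\psi\big)=0$. Similarly, changing $\tilde\psi$ by an element of $\ker\alpha$ changes nothing, after a change of summation variable $\g\mapsto\g\inv$ and using that $K_{D\inv}$ is $G$-invariant as a kernel (being the kernel of a $G$-invariant operator), so that $\sum_\g K_{D\inv}(L_\g\tilde\varphi\otimes L_\g k)$ can be rearranged. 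Convergence of the $\g$-sum for fixed test functions follows because, with $\supp\tilde\varphi$ and $\supp\tilde\psi$ compact, only finitely many $\g$ contribute on any compact piece and the tail is controlled by the off-diagonal decay of the resolvent kernel $K_{D\inv}$ (which is the relevant smoothness/decay input and the one genuinely nontrivial ingredient).

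Once $\mathcal K$ is a well-defined element of $\mathcal D'(\G\backslash G/K\times\G\backslash G/K)$, I would identify it with $K_{{}_\G D\inv}$ by checking it inverts ${}_\G D$. Using $\alpha(D\tilde\psi)={}_\G D\psi$ and the fact that $K_{D\inv}$ inverts $D$ on $G/K$, one computes
\[
\mathcal K(\varphi\otimes {}_\G D\psi)=\sum_{\g\in\G}K_{D\inv}(L_\g\tilde\varphi\otimes D\tilde\psi)=\sum_{\g\in\G}(L_\g\tilde\varphi)(\tilde\psi)=\Big(\sum_{\g\in\G}L_\g\tilde\varphi\Big)(\tilde\psi),
\]
and the last expression is $\langle\alpha\tilde\varphi,\psi\rangle_{\G\backslash G/K}=\langle\varphi,\psi\rangle$ after unfolding the integral over $\G\backslash G/K$ against the $\G$-sum; an analogous computation gives $\mathcal K({}_\G D\varphi\otimes\psi)=\langle\varphi,\psi\rangle$. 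Since ${}_\G D$ is invertible by hypothesis, this forces $\mathcal K=K_{{}_\G D\inv}$. The ``slight abuse of notation'' formula $K_{{}_\G D\inv}(\G x,\G y)=\sum_{\g\in\G}K_{D\inv}(x,\g y)$ is then just the informal pointwise rendering of the distributional identity. The main obstacle I expect is purely the bookkeeping around convergence and the independence of preimages; the structural content — that averaging intertwines $D$ with ${}_\G D$ and is compatible with taking inverses — is immediate from $G$-invariance.
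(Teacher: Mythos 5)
Your proposal is correct and takes essentially the same approach as the paper: both use the $G$-invariance of $D$ (hence of $K_{D\inv}$) to intertwine the averaging map $\alpha$ with $D$ and ${}_\G D$, unfold the $\Gamma$-sum against the integral over $\G\backslash G/K$, and conclude by invoking the invertibility of ${}_\G D$. Your explicit verification that the formula is independent of the chosen preimages is a clarification the paper leaves implicit, not a different route.
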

	\begin{proof}
		First of all note that $D$ and therefore $D\inv$ is $G$-invariant, hence 
		$K_D(\tilde \varphi\otimes \tilde \psi) = K_D(L_g \tilde \varphi \otimes L_g \tilde \psi)$ for all $g\in G$ and $\tilde \varphi,\tilde \psi\in C_c^\infty(G/K)$.
		Let $\varphi = \alpha \tilde \varphi, \psi=\alpha\tilde \psi\in C_c^\infty(\G\backslash G/K)$.
		By definition of $K_{{}_\G D\inv}$ we have
		$K_{{}_\G D\inv} ((_\G D \varphi )\otimes  \psi)= \int_{\G\backslash G/K}^{} {\varphi(\Gamma x)\psi(\G x)} \: d\G x.$
		On the other hand 
		${}_\G D \varphi =  \alpha (D\tilde \varphi)$ by $G$-invariance of $D$ 
		so that we can choose $D\tilde \varphi$ as $\widetilde{ {}_\G D \phi}$.
		Therefore we have to show
		\begin{align*}
			\sum_{\g\in\G} K_{D\inv}(L_\g D\tilde \varphi\otimes \tilde \psi) =
			\int_{\G\backslash G/K}^{} {\varphi(\Gamma x) \psi(\G x)} \: d\G x.	
		\end{align*}
		The left hand side equals
		\[
			\sum_{\g\in \G}^{} K_{D\inv}(DL_\g \tilde\varphi \otimes \tilde \psi)
			=\sum_{\g\in \G}^{} \int_{G/K}^{} {L_\g \tilde \varphi(x)\tilde\psi(x)} \: dx
		\]
		again by $G$-invariance of $D$ and the definition of $K_{D\inv}$.
		Now we can use the definition of the measure of $\G\backslash G/K$ to conclude
		\begin{align*}
			\sum_{\g\in \G}^{} \int_{G/K}^{} {L_\g \tilde \varphi(x)\tilde\psi(x)} \: dx
			= 
			\sum_{\g\in \G}^{} \int_{\G\backslash G/K}^{} {\sum_{\g'\in \G}  \tilde \varphi(\g x)\tilde\psi(\g'x)} \: d\G x
			= \int_{\G\backslash G/K}^{} {\varphi(\G x)\psi(\G x)} \: d\G x.	
		\end{align*}
		This shows the lemma.
\end{proof}

\subsection{Spectrum of the Laplacian in a general locally symmetric space of rank one}
In this section we recall the connection between the bottom of the Laplace spectrum on the locally symmetric space $\Gamma\backslash G/K$  of rank one and the critical exponent of $\Gamma$ which is due to Elstrodt \cite{MR360472, MR360473, MR360474} and Patterson \cite{pattersonlimitset} for $G=SL_2(\R)$, Sullivan \cite{Sul87} for $G=SO_0(n,1)$, and Corlette \cite{Cor90} for general $G$ of rank one.
In the higher rank setting this was generalized by Leuzinger \cite{Leu04}, Weber \cite{Web08}, and  Anker and Zhang \cite{ankerzhang}.
\begin{definition}
	We define the abscissa of convergence/critical exponent for $\G$ as \[\delta_\G \coloneqq \inf\left\{s\in \R \colon \sum_{\g\in\G} e^{-s \|\mu_+( \gamma )\|}<\infty \right\}.\]
\end{definition}
Let us recall the theorem for the bottom of the spectrum on a locally symmetric space of rank one and 
its proof as we will use it later in the proof of Theorem~\ref{prop:spectrumlaplaceonefactor}.
\begin{proposition}\label{prop:spectrumrankone}
	Let $G/K$ be a symmetric space of rank one and $\G$ a torsion-free discrete subgroup. 
	Then 
	\begin{align*}
		\sigma(_\G\Delta)\subseteq \begin{cases}
			[\|\rho\|^2,\infty[&\colon\delta_\G<\|\rho\|\\
			[\|\rho\|^2-(\delta_\G-\|\rho\|)^ 2,\infty[&\colon \delta_\G\geq\|\rho\|.
		\end{cases}
	\end{align*}
\end{proposition}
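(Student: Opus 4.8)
The plan is to follow the classical Patterson–Sullivan–Corlette strategy via the resolvent kernel, using Lemma~\ref{la:resolventkernellocally} as the bridge between the global space $G/K$ and the quotient $\G\backslash G/K$. Concretely, for $s>0$ consider the shifted resolvent $(\Delta-(\|\rho\|^2-s^2))^{-1}$ on $L^2(G/K)$, whose Schwartz kernel $K_s(x,y)$ is a bi-$K$-invariant function of $d(x,y)$. The key analytic input is the known sharp asymptotics for the resolvent kernel of the Laplacian on a rank one symmetric space (see \cite{ankerji}): as $d(x,y)\to\infty$, $K_s(x,y)$ decays like $e^{-(\|\rho\|+s)\,d(x,y)}$ up to polynomial factors (for $s$ in the relevant range, and with the understanding that for $s$ large enough this is the bottom-of-$L^2$-spectrum regime). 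I would first state this asymptotic carefully as a lemma — this is essentially the content referred to as Lemma~\ref{la:nongroup} in the outline.

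The second step is the averaging/convergence argument. By Lemma~\ref{la:resolventkernellocally}, provided both $\Delta-(\|\rho\|^2-s^2)$ on $G/K$ and ${}_\G\Delta-(\|\rho\|^2-s^2)$ on $\G\backslash G/K$ are invertible, the local resolvent kernel is $\sum_{\g\in\G}K_s(x,\g y)$. The point is to show directly that this sum converges (absolutely, locally uniformly) whenever $\|\rho\|+s>\delta_\G$, i.e. whenever $s>\delta_\G-\|\rho\|$; combined with the decay estimate this reduces to comparing $\sum_{\g\in\G}e^{-(\|\rho\|+s)d(x,\g y)}$ with the Poincaré series defining $\delta_\G$, using that $d(x,\g y)$ and $\|\mu_+(\g)\|$ differ by a bounded amount when $x,y$ range over a fixed compact set. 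Convergence of this series forces $\|\rho\|^2-s^2\notin\sigma({}_\G\Delta)$: one argues that the candidate inverse operator built from the kernel $\sum_\g K_s(x,\g y)$ is bounded on $L^2(\G\backslash G/K)$ (e.g. by a Schur-test estimate on the kernel, again using the exponential decay and the Poincaré series bound), and that it genuinely inverts ${}_\G\Delta-(\|\rho\|^2-s^2)$, which one checks on the dense image of $\alpha$ exactly as in the proof of Lemma~\ref{la:resolventkernellocally}. Hence every $\lambda<\|\rho\|^2$ with $\|\rho\|-\sqrt{\|\rho\|^2-\lambda}>\delta_\G$ lies in the resolvent set of ${}_\G\Delta$, which is precisely the asserted inclusion: if $\delta_\G<\|\rho\|$ then no such $\lambda<\|\rho\|^2$ is excluded only when the gap condition fails, giving $\sigma({}_\G\Delta)\subseteq[\|\rho\|^2,\infty[$, and if $\delta_\G\ge\|\rho\|$ the threshold is $\|\rho\|^2-(\delta_\G-\|\rho\|)^2$.

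There is a bootstrapping subtlety to handle: invertibility of ${}_\G\Delta-(\|\rho\|^2-s^2)$ is part of the hypothesis of Lemma~\ref{la:resolventkernellocally}, yet it is (part of) what we want to conclude. The standard fix, which I would use, is a connectedness-in-$s$ argument: since $\Delta$ is a nonnegative self-adjoint operator, $\sigma({}_\G\Delta)\subseteq[0,\infty[$, so $\|\rho\|^2-s^2$ lies in the resolvent set for all $s$ with $s^2>\|\rho\|^2$; starting from such large $s$ (where everything is legitimate) one decreases $s$ continuously, and as long as the Poincaré-series convergence persists — i.e. as long as $s>\delta_\G-\|\rho\|$ — the explicit bounded kernel provides an inverse, so the resolvent set cannot close up. I expect the main obstacle to be making this continuity argument fully rigorous together with the uniform $L^2$-boundedness of the averaged kernel: one must ensure the Schur-test bounds are uniform on compact $s$-intervals and that the exceptional behavior of the resolvent kernel near the diagonal (where it has the standard elliptic singularity, not just exponential decay at infinity) does not obstruct the estimate — but near the diagonal only the single term $\g=e$ is singular and it is harmless since it is the kernel of the honest global resolvent restricted to a fundamental domain. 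The rank one hypothesis enters precisely and only through the availability of the sharp resolvent asymptotics for $\Delta$.
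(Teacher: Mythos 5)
Your skeleton — average the global Green function over $\Gamma$, compare with the Poincar\'e series via the triangle inequality for $\mu_+$, treat the finitely many terms meeting the diagonal separately — is the same as the paper's. The genuine gap is in the step your argument hinges on: that the averaged kernel $\sum_{\gamma}K_s(x,\gamma y)$ defines a \emph{bounded} operator on $L^2(\Gamma\backslash G/K)$ at the real spectral parameter $\|\rho\|^2-s^2$, ``by a Schur-test estimate\ldots using the exponential decay and the Poincar\'e series bound''. The unweighted Schur test fails here for reasons independent of $\Gamma$: unfolding, the Schur integral equals $\int_{G/K}|K_s(x,y)|\,dy$, and since the Green function decays like $e^{-(\|\rho\|+s)d(x,y)}$ (Theorem~\ref{thm:globalboundsGreen}, and this rate is sharp by \cite{ankerji}) while the volume element in rank one grows like $e^{2\|\rho\|\,d(x,y)}$, this integral diverges for every $0<s\leq\|\rho\|$ — exactly the range you need, since the case $\delta_\Gamma<\|\rho\|$ requires pushing $s\downarrow0$. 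Convergence of the Poincar\'e series controls the sum over $\gamma$ locally uniformly in $(x,y)$, but it does not control the $y$-integration over the noncompact quotient, which is what Schur requires. To salvage your route one needs an extra idea, e.g.\ a weighted Schur test with a $\Gamma$-invariant positive weight built from an averaged positive (super)eigenfunction — essentially the positive-eigenfunction characterization of the bottom of the spectrum used by Sullivan and Corlette — and your continuation-in-$s$ bootstrapping would then also have to be made precise (though once $L^2$-boundedness and the left-inverse identity on the core $C_c^\infty(\Gamma\backslash G/K)$ are in hand, a Weyl-sequence argument already excludes the spectral value, so no continuity argument is needed).

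The paper sidesteps precisely this difficulty by a different device: Stone's formula (Proposition~\ref{prop:StoneFormula}). It works with complex spectral parameters $z\pm i\varepsilon$, for which $({}_\Gamma\Delta-(z\pm i\varepsilon))^{-1}$ is automatically bounded by self-adjointness, so Lemma~\ref{la:resolventkernellocally} applies with no bootstrapping; one then only has to show that the difference of the averaged kernels, paired against fixed compactly supported test functions and integrated in $z$ over $[0,b]$, tends to $0$ as $\varepsilon\to0$. That needs only the pointwise bounds of Theorem~\ref{thm:globalboundsGreen} and Remark~\ref{rmk:trivialboundsGreen}, the Poincar\'e-series comparison, and dominated convergence — never $L^2$-boundedness of an averaged kernel, which is exactly the advantage the paper points out after Proposition~\ref{prop:StoneFormula}. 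A minor mislabeling in your plan: the sharp decay of the resolvent kernel is Theorem~\ref{thm:globalboundsGreen}, not Lemma~\ref{la:nongroup}; the latter is the averaged convergence statement (for a multiset $D$) that the paper feeds into Stone's formula.
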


The main ingredient for the proof of Proposition~\ref{prop:spectrumrankone} is the Green function which is the resolvent kernel $K_{(\Delta-z)\inv}$ for the Laplacian $\Delta$.
It is well-known that $K_{(\Delta-z)\inv}$ is smooth function away from the diagonal.
By the $G$-invariance of $\Delta$ we have $K_{(\Delta-z)\inv}(gx,gy)=K_{(\Delta-z)\inv}(x,y)$
and therefore $K_{(\Delta-z)\inv}(x,y)$ only depends on $\mu_+(x\inv y)$.
This allows us to see $K_{(\Delta-z)\inv}$ as a function on $A$ which has the following global bounds:

\begin{theorem}[{\cite[Thm.~4.2.2]{ankerji}}]\label{thm:globalboundsGreen}
	\hspace{0.5cm}
	\begin{enumerate}
		\item For every $z<b<\|\rho\|^2$ there is a constant $C_{z,b}>0$ such that 
			\begin{align*}
				K_{(\Delta-z)\inv}(e^H) \leq C_{z,b} e^{-(\sqrt{\|\rho\|^2 -b} + \|\rho\|)\|H\|}
			\end{align*}
			for all $H\in\mf a$ away from the origin.
		\item For every $z<\|\rho\|^2$ there is a constant $C_z$ such that 
			\begin{align*}
				K_{(\Delta-z)\inv}(e^H) \leq C_{z} 
				\begin{cases}
					\|H\|^{2-\dim(G/K)}&\colon \dim(G/K)>2\\
					\log(1/\|H\|) & \colon \dim(G/K)=2
				\end{cases}
			\end{align*}
			for all $H\in \mf a$ near the origin.
	\end{enumerate}
\end{theorem}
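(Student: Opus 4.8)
The plan is to derive both estimates from the subordination of the resolvent to the heat semigroup, taking the sharp global heat kernel bounds on rank one symmetric spaces as the only genuine analytic input. Since $\inf\sigma(\Delta)=\|\rho\|^2$ on $X=G/K$ and $z<\|\rho\|^2$, the operator integral $(\Delta-z)\inv=\int_0^\infty e^{tz}e^{-t\Delta}\,dt$ converges in operator norm, and passing to the bi-$K$-invariant (hence radial) Schwartz kernels this reads
\[
	K_{(\Delta-z)\inv}(e^H)=\int_0^\infty e^{tz}\,p_t(e^H)\,dt ,
\]
where $p_t$ is the heat kernel of $X$. One inserts the sharp upper bound for $p_t$ on a rank one symmetric space from \cite{ankerji}: there are $N\geq0$ and $C>0$ with
\[
	p_t(e^H)\leq C\,t^{-\dim(G/K)/2}\,(1+t+\|H\|)^{N}\,e^{-\|\rho\|^2 t-\|\rho\|\,\|H\|-\|H\|^2/(4t)}
\]
for all $t>0$, $H\in\mf a$ (in rank one $\langle\rho,H\rangle=\|\rho\|\,\|H\|$, so the factor $e^{-\langle\rho,H\rangle}$ is exactly $e^{-\|\rho\|\,\|H\|}$). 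Since $(1+t+\|H\|)^N\leq(1+t)^N(1+\|H\|)^N$, after substitution the factors $e^{-\|\rho\|\,\|H\|}$ and $(1+\|H\|)^N$ come out of the $t$-integral and one is reduced to the elementary Laplace-type integrals $\int_0^\infty e^{-at}t^{-m}e^{-\|H\|^2/(4t)}\,dt$ with $a=\|\rho\|^2-z>0$ and finitely many $m\in\tfrac12\Z$.

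For part (i), fix $z<b<\|\rho\|^2$ and take $\|H\|\geq\eps_0>0$. The integral $\int_0^\infty e^{-at}t^{-m}e^{-\|H\|^2/(4t)}\,dt$ is a modified Bessel integral; evaluating it by the substitution $u=\|H\|^2/(4t)$ (or by the saddle point at $t\asymp\|H\|/(2\sqrt a)$) bounds it by $C'\|H\|^{N'}e^{-\sqrt a\,\|H\|}$ with $N',C'$ depending only on $\dim(G/K)$ and $N$. Together with the extracted $(1+\|H\|)^N e^{-\|\rho\|\,\|H\|}$ this gives
\[
	K_{(\Delta-z)\inv}(e^H)\leq C''\,\|H\|^{N''}\,e^{-(\|\rho\|+\sqrt{\|\rho\|^2-z})\,\|H\|}.
\]
Because $z<b$ implies $\sqrt{\|\rho\|^2-z}>\sqrt{\|\rho\|^2-b}$, the polynomial prefactor $\|H\|^{N''}$ is dominated on $\{\|H\|\geq\eps_0\}$ by $e^{(\sqrt{\|\rho\|^2-z}-\sqrt{\|\rho\|^2-b})\|H\|}$; absorbing it yields $K_{(\Delta-z)\inv}(e^H)\leq C_{z,b}\,e^{-(\sqrt{\|\rho\|^2-b}+\|\rho\|)\|H\|}$, which is statement (i).

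For part (ii), fix $z<\|\rho\|^2$ and split $\int_0^\infty=\int_0^1+\int_1^\infty$. On $[1,\infty)$ the factor $e^{-at}$ with $a=\|\rho\|^2-z>0$ beats all the polynomial factors, while $e^{-\|H\|^2/(4t)}\leq1$ and $e^{-\|\rho\|\,\|H\|}\leq1$, so this contribution is bounded by a constant depending only on $z$. On $(0,1)$, for $\|H\|$ small the polynomial factors are comparable to $1$, and the substitution $u=\|H\|^2/(4t)$ gives
\[
	\int_0^1 t^{-\dim(G/K)/2}e^{-\|H\|^2/(4t)}\,dt
	=4^{\dim(G/K)/2-1}\,\|H\|^{2-\dim(G/K)}\int_{\|H\|^2/4}^\infty u^{\dim(G/K)/2-2}e^{-u}\,du .
\]
As $\|H\|\to0$ the last integral tends to $\Gamma\big(\tfrac{\dim(G/K)}{2}-1\big)$ when $\dim(G/K)>2$, so the whole expression is $\asymp\|H\|^{2-\dim(G/K)}$; when $\dim(G/K)=2$ it equals $\Gamma(0,\|H\|^2/4)$, which behaves like $\log(1/\|H\|)$. (A rank one symmetric space satisfies $\dim(G/K)\geq2$, so these are the only cases.) This is precisely the Euclidean-type local singularity in (ii), as one expects since near the diagonal $\Delta-z$ is a lower-order perturbation of the flat Laplacian.

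The hard part is not a single inequality but the bookkeeping: one must quote the heat kernel bound with explicit enough exponents and then carry the polynomial factors in $t$ and $\|H\|$ through the Laplace transform so that in (i) they are genuinely absorbed by the strictly positive gap $\sqrt{\|\rho\|^2-z}-\sqrt{\|\rho\|^2-b}$, uniformly in $\|H\|\geq\eps_0$, and in (ii) they do not disturb the leading $\|H\|^{2-\dim(G/K)}$ or $\log$ behaviour, all with constants uniform in the relevant range of $z$. A more classical but computationally heavier alternative, available because we are in rank one, avoids the heat kernel altogether: one writes $K_{(\Delta-z)\inv}(e^H)=c\int_{i\mf a^\ast}\frac{\phi_\lambda(e^H)}{\chi_\lambda(\Delta)-z}\,|\cf(\lambda)|^{-2}\,d\lambda$ for a suitable constant $c$ by spherical inversion (Theorem~\ref{thm:FourierL2}) and shifts the contour in $\lambda$ across the pole coming from the vanishing of $\chi_\lambda(\Delta)-z$, using the Harish-Chandra asymptotics $\phi_\lambda(e^H)\sim\sum_{w\in W}\cf(w\lambda)e^{(w\lambda-\rho)(H)}$ for $\|H\|$ large; this again produces the rate $\|\rho\|+\sqrt{\|\rho\|^2-z}$ but requires care with the poles of $\cf(-\lambda)\inv$ and with the sign of $z$.
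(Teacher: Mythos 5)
The paper does not prove this statement at all---it is quoted verbatim from Anker--Ji \cite[Thm.~4.2.2]{ankerji}, whose derivation is precisely the one you give: write $K_{(\Delta-z)\inv}(e^H)=\int_0^\infty e^{tz}p_t(e^H)\,dt$ and feed in the sharp global heat kernel upper bound, with the Bessel-type Laplace integral producing the rate $\|\rho\|+\sqrt{\|\rho\|^2-z}$ away from the origin and the Euclidean singularity $\|H\|^{2-\dim(G/K)}$ (resp.\ $\log(1/\|H\|)$) near it. Your argument is correct (granting the quoted heat kernel estimate as input, and noting that by Weyl invariance one may take $H\in\ov{\mf a_+}$ so that $\langle\rho,H\rangle=\|\rho\|\,\|H\|$), so it matches the source's proof rather than offering a genuinely different route.
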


\begin{remark} \label{rmk:trivialboundsGreen}
	In addition to the bounds on $ K_{(\Delta-z)\inv}$ from Theorem~\ref{thm:globalboundsGreen} we will use the following general estimates:
	\begin{align*}
		|K_{(\Delta-z)\inv}|\leq K_{(\Delta-\Re z)\inv}
	\end{align*}
	which is positive.
	Moreover,
	\begin{align*}
		K_{(\Delta-z)\inv}\leq K_{(\Delta-z')\inv}\quad\text{for}\quad z\leq z'< \|\rho\|^2.
	\end{align*}
	These estimates can been seen e.g. by writing $(\Delta-z)\inv$ in terms of the Laplace transform.
\end{remark}

In order to decide whether the kernel given by the averaging construction of Lemma~\ref{la:resolventkernellocally} defines a bounded inverse on $L^2(\G\backslash G/K)$ we use Stone's formula.
\begin{proposition}[{see e.g. \cite[Prop.~5.14]{Schmudgen}}]
	\label{prop:StoneFormula}
	Let $A$ be a self-adjoint operator and $P_I$ the spectral projector of $A$ for a Borel subset $I\subseteq \R$. 
	Then $$\frac 12 (P_{[a,b]}+P_{]a,b[}) = \lim_{\varepsilon\to 0} \frac 1 {2\pi i} \int_a^b (A-(z+i\varepsilon))\inv - (A-(z-i\varepsilon))\inv dz.$$
	Here the limit as $\varepsilon\to 0$ is understood as a strong limit.
\end{proposition}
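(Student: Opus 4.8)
The plan is to derive the formula directly from the spectral theorem for the self-adjoint operator $A$. Writing $A=\int_\R\lambda\,dE(\lambda)$ for its projection-valued spectral measure $E$, so that $P_I=E(I)$, the bounded resolvents at the non-real points $z+i\eps$ and $z-i\eps$ (for $\eps>0$) are $(A-(z\pm i\eps))\inv=\int_\R(\lambda-z\mp i\eps)\inv\,dE(\lambda)$, and subtracting gives
\[
	\frac{1}{2\pi i}\bigl[(A-(z+i\eps))\inv-(A-(z-i\eps))\inv\bigr]=\int_\R\frac{1}{\pi}\,\frac{\eps}{(\lambda-z)^2+\eps^2}\,dE(\lambda),
\]
a bounded positive operator that depends norm-continuously (indeed holomorphically) on $z$. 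In particular the operator-valued integral $\int_a^b(\cdots)\,dz$ over the compact interval $[a,b]$ exists as a Bochner integral of bounded operators, so that pairing with a fixed vector commutes with it.

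Next I would fix $\psi\in\mc H$ and pass to the finite scalar measure $\nu_\psi(\cdot)\coloneqq\langle E(\cdot)\psi,\psi\rangle$. Since the kernel $\frac1\pi\,\frac{\eps}{(\lambda-z)^2+\eps^2}$ is continuous and bounded on $[a,b]\times\R$ and $\nu_\psi$ is finite, the classical Fubini theorem lets me interchange $\int_a^b dz$ with $\int_\R d\nu_\psi(\lambda)$, yielding
\[
	\Bigl\langle\Bigl(\frac{1}{2\pi i}\int_a^b\bigl[(A-(z+i\eps))\inv-(A-(z-i\eps))\inv\bigr]\,dz\Bigr)\psi,\psi\Bigr\rangle=\int_\R f_\eps(\lambda)\,d\nu_\psi(\lambda),
\]
where $f_\eps(\lambda)=\frac1\pi\bigl(\arctan\frac{b-\lambda}{\eps}-\arctan\frac{a-\lambda}{\eps}\bigr)$ is obtained by evaluating the inner $z$-integral. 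By polarization this identifies the operator in brackets with $f_\eps(A)$ in the Borel functional calculus.

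The computation is then finished by a pointwise limit: as $\eps\to0^+$ one has $f_\eps(\lambda)\to1$ for $a<\lambda<b$, $f_\eps(\lambda)\to\tfrac12$ for $\lambda\in\{a,b\}$, and $f_\eps(\lambda)\to0$ for $\lambda\notin[a,b]$, i.e. $f_\eps\to\tfrac12(\mathbf{1}_{[a,b]}+\mathbf{1}_{]a,b[})$ pointwise, with the uniform bound $0\le f_\eps\le1$ (the integrand being a probability density in $z$). Dominated convergence for the spectral integral—equivalently $\int_\R|f_\eps-\tfrac12(\mathbf{1}_{[a,b]}+\mathbf{1}_{]a,b[})|^2\,d\nu_\psi\to0$ for every $\psi$—then gives $f_\eps(A)\psi\to\tfrac12(P_{[a,b]}+P_{]a,b[})\psi$ in norm, which is precisely the asserted strong limit.

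The main obstacle is the bookkeeping around the two exchanges of a limit or integral with the spectral integral: the Fubini step swapping $\int_a^b dz$ with $\int_\R dE(\lambda)$, and the $\eps\to0$ dominated-convergence step inside the functional calculus. Both become routine once everything is tested against a vector $\psi$ and reduced to the finite scalar measures $\nu_\psi$; the only points requiring a word of care are that the $[a,b]$-integral is a genuine Bochner integral (so that $\langle\cdot\,,\psi\rangle$ passes through it) and that the dominating constant $1$ is uniform in $\eps$, $z$ and $\psi$.
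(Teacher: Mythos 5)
Your proof is correct and complete: the Poisson-kernel identity for the resolvent difference, the Fubini/Bochner bookkeeping against the finite measures $\nu_\psi$, and the dominated-convergence limit $f_\eps\to\tfrac12(\mathbf{1}_{[a,b]}+\mathbf{1}_{]a,b[})$ with the uniform bound $0\le f_\eps\le 1$ are exactly what is needed. The paper itself does not prove this proposition but cites Schm\"udgen, and your argument is precisely the standard spectral-theorem proof given there, so the approaches coincide.
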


The advantage of Stone's formula is that the occurring inverted operators are well-defined by the self-adjointness of $A$.   
Hence we can merely consider the Schwartz kernel without having to wonder whether this kernel defines a bounded operator on $L^2$.

\begin{proof}
	[Proof of Prop.~\ref{prop:spectrumrankone}]

	According to Proposition~\ref{prop:StoneFormula}
	we have to determine for which $b<\|\rho\|^2$:
	\begin{align}\label{eq:intstoneLaplace}
		\int_0^b {( {}_\G \Delta-(z+i\eps))\inv} -{( {}_\G \Delta-(z-i\eps))\inv}\:dz \to 0
	\end{align}
	in the strong sense as $\eps\to 0$.
	As in Lemma~\ref{la:resolventkernellocally} denote the Schwartz kernel of $(_\G D -(z\pm i\varepsilon))\inv$ by $K_{({}_\G D -(z\pm i\varepsilon))\inv}$. 
	Then we need to see that 
	\begin{align}\label{eq:intprodstonerankone}
		\int_0^b (K_ {( {}_\G \Delta-(z+i\eps))\inv} -K_ {( {}_\G \Delta-(z-i\eps))\inv})( \varphi \otimes \psi) dz \to 0
	\end{align}
	as $\eps\to 0$ for every $\varphi,\psi\in C_c^\infty(\G\backslash G/K)$ for certain $b<\|\rho\|^2$.
	Let $\tilde \varphi$ (resp. $\tilde \psi$) be a preimage of $\varphi$ (resp. $\psi$) under the map $\alpha$.
	Then the expression in  \eqref{eq:intprodstonerankone} equals 
	\begin{equation}
		\label{eq:expandedstone}
		\int_{0}^{b} {\sum_{\g\in\G}(K_ {  (\Delta-(z+i\eps))\inv} -K_ {  (\Delta-(z-i\eps))\inv})( L_\g \tilde \varphi \otimes \tilde \psi)} \: dz
	\end{equation}	
	by Lemma~\ref{la:resolventkernellocally} since $\Delta$ is symmetric and therefore ${}_\G\Delta-(z\pm i\eps)$ is invertible.

	The following slightly more general lemma shows that \eqref{eq:intstoneLaplace} holds for 
	$b < \|\rho\|^2 - (\max \{0, \delta_\Gamma-\|\rho\|\})^2$
	and
	hence $\sigma(_\G \Delta)\cap (-\infty, \|\rho\|^2-(\max\{0,\delta_\G-\|\rho\|\})^2)=\emptyset$.
\end{proof}
	\begin{lemma}
	\label{la:nongroup}
		Let $D$ be a multiset whose underlying set is a discrete subset of a rank one Lie group $G$ and 
		\[\delta_D \coloneqq \inf\left\{s\in \R \colon \sum_{\g\in D} e^{-s \|\mu_+( \gamma )\|}<\infty \right\}.\]
	For $b < \|\rho\|^2 - (\max \{0, \delta_D-\|\rho\|\})^2$
	it holds that 
\begin{equation*}
		\int_{0}^{b} {\sum_{\g\in D}(K_ {  (\Delta-(z+i\eps))\inv} -K_ {  (\Delta-(z-i\eps))\inv})( L_\g \tilde \varphi \otimes \tilde \psi)} \: dz
		\to 0
	\end{equation*}
	as $\varepsilon\to 0$ for every $\tilde\varphi,\tilde \psi\in C_c^\infty(G/K)$.
	\end{lemma}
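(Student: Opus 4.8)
The plan is to combine the pointwise Green--function estimates of Theorem~\ref{thm:globalboundsGreen} with the elementary fact that, since $\sigma(\Delta)=[\|\rho\|^2,\infty[$ on $L^2(G/K)$ (cf.\ the discussion after Theorem~\ref{thm:FourierL2}) and $0\le z\le b<\|\rho\|^2$, the whole segment $[0,b]$ lies in the resolvent set of $\Delta$. Fix $\tilde\varphi,\tilde\psi\in C_c^\infty(G/K)$ with supports $S_\varphi,S_\psi$ and put $o=eK$, $c_0:=\sup_{x\in S_\psi} d(xK,o)+\sup_{y\in S_\varphi} d(yK,o)$. Since $b<\|\rho\|^2-(\max\{0,\delta_D-\|\rho\|\})^2$, I would first choose $b_1$ with $b<b_1<\|\rho\|^2-(\max\{0,\delta_D-\|\rho\|\})^2$ and set $s:=\sqrt{\|\rho\|^2-b_1}+\|\rho\|$; distinguishing the cases $\delta_D\le\|\rho\|$ and $\delta_D>\|\rho\|$ one checks $s>\delta_D$, so $\sum_{\g\in D}e^{-s\|\mu_+(\g)\|}<\infty$. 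This choice is exactly where the hypothesis on $b$ is used.

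Next I would split the multiset $D$ into the finite part $D_c:=\{\g\in D\mid \g S_\varphi\cap S_\psi\neq\emptyset \text{ or } \|\mu_+(\g)\|\le c_0+1\}$ — finite because the underlying set of $D$ is discrete and $S_\varphi,S_\psi$ are compact — and the remainder $D_f:=D\setminus D_c$. For $\g\in D_c$ the number $(K_{(\Delta-(z+i\eps))\inv}-K_{(\Delta-(z-i\eps))\inv})(L_\g\tilde\varphi\otimes\tilde\psi)$ is the $L^2(G/K)$-pairing of $\big((\Delta-(z+i\eps))\inv-(\Delta-(z-i\eps))\inv\big)L_\g\tilde\varphi$ with $\tilde\psi$. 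Because $[0,b]$ is in the resolvent set, $\|(\Delta-(z\pm i\eps))\inv\|\le(\|\rho\|^2-b)\inv$ uniformly for $z\in[0,b]$, hence $\big\|(\Delta-(z+i\eps))\inv-(\Delta-(z-i\eps))\inv\big\|=2\eps\|(\Delta-(z+i\eps))\inv(\Delta-(z-i\eps))\inv\|\le 2\eps(\|\rho\|^2-b)^{-2}$, so the contribution of the \emph{finite} sum $\sum_{\g\in D_c}$ to $\int_0^b(\cdots)dz$ is $O(\eps)$ and vanishes as $\eps\to0$.

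For $\g\in D_f$ the supports $\g S_\varphi$ and $S_\psi$ are disjoint, so the pairing is given by the smooth kernel $\iint K_{(\Delta-(z\pm i\eps))\inv}(x,w)\,L_\g\tilde\varphi(w)\,\tilde\psi(x)\,dw\,dx$. By Remark~\ref{rmk:trivialboundsGreen}, $|K_{(\Delta-(z\pm i\eps))\inv}|\le K_{(\Delta-z)\inv}\le K_{(\Delta-b)\inv}$ for $z\in[0,b]$, and by $G$-invariance $K_{(\Delta-b)\inv}(x,w)$ depends only on $\mu_+(x\inv w)$; for $x\in S_\psi$, $w\in\g S_\varphi$, $\g\in D_f$ one has $\|\mu_+(x\inv w)\|=d(wK,xK)\ge\|\mu_+(\g)\|-c_0>1$ by the subadditivity $\|\mu_+(gh)\|\le\|\mu_+(g)\|+\|\mu_+(h)\|$ (valid since $\|\mu_+(g)\|=d(gK,o)$), so the first bound in Theorem~\ref{thm:globalboundsGreen} applies and gives $K_{(\Delta-b)\inv}(x,w)\le C\,e^{-s(\|\mu_+(\g)\|-c_0)}$. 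Summing, $\sum_{\g\in D_f}\sup_{x\in S_\psi,\,w\in\g S_\varphi}K_{(\Delta-b)\inv}(x,w)\le Ce^{sc_0}\sum_{\g\in D}e^{-s\|\mu_+(\g)\|}<\infty$, so
\[
(z,\g)\ \longmapsto\ 2\Big(\sup_{x\in S_\psi,\,w\in\g S_\varphi}K_{(\Delta-b)\inv}(x,w)\Big)\|\tilde\varphi\|_{L^1}\|\tilde\psi\|_{L^1}
\]
is an integrable majorant on $[0,b]\times D_f$, uniform in $0<\eps\le1$. Since $z\mapsto(\Delta-z)\inv$ is holomorphic near $[0,b]$, its Schwartz kernels depend holomorphically on $z$ and $K_{(\Delta-(z\pm i\eps))\inv}(x,w)\to K_{(\Delta-z)\inv}(x,w)$ as $\eps\to0$ off the diagonal, so the integrand tends to $0$ pointwise and dominated convergence yields $\int_0^b\sum_{\g\in D_f}(\cdots)\,dz\to0$. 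Adding the two contributions proves the lemma.

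The main obstacle is precisely the exponential bookkeeping in the first and third paragraphs: one has to align the decay rate $\sqrt{\|\rho\|^2-b_1}+\|\rho\|$ coming from Theorem~\ref{thm:globalboundsGreen} with the growth exponent $\delta_D$, which is possible exactly in the range $b<\|\rho\|^2-(\max\{0,\delta_D-\|\rho\|\})^2$. Everything else — the uniform resolvent bound on $[0,b]$, the handling of the finitely many ``close'' terms, the domination and pointwise kernel convergence for the ``far'' terms — is soft analysis.
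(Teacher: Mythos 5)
Your proof is correct and follows essentially the same route as the paper's: split the sum into the finitely many near-diagonal terms and the far terms, bound the latter by the Green-function decay from Theorem~\ref{thm:globalboundsGreen} combined with the triangle inequality for $\|\mu_+\|$, and conclude by dominated convergence. The only cosmetic differences are that you make the finite-part estimate explicit via the second resolvent identity (a uniform $O(\eps)$ bound on $[0,b]$, where the paper just invokes norm-resolvent convergence) and that you introduce an intermediate $b_1$ in place of the paper's ``$-\nu$'' loss to align the decay exponent with $\delta_D$.
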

	\begin{proof}
	Since the supports of $\tilde \varphi$ and $\tilde \psi$ are compact there are only finitely many $\g\in \G$ such that $\on{supp}(L_\g \tilde \varphi \otimes \psi)$ intersects the diagonal in $G/K\times G/K$ non-trivially.
	For these finitely many $\g\in \Gamma$ the term converges to $0$ as $\Delta - z$ is invertible on $L^2(G/K)$ for $z < \|\rho\|^2$ and therefore $(\Delta-(z\pm i\eps))\inv \to (\Delta -z)\inv$.

	For the other $\g$ we use that $K_{(\Delta - z)\inv}$ is a smooth function away from the diagonal and the estimates from Remark~\ref{rmk:trivialboundsGreen}.
	\begin{align*}
&	\left|		\int_{0}^{b} {\sum_{\g}(K_ {  (\Delta-(z+i\eps))\inv} -K_ {  (\Delta-(z-i\eps))\inv})( L_\g \tilde \varphi \otimes \tilde \psi)} \: dz\right|\\
&\leq \sup_{0\leq z\leq b} b \sum_\g \left |( K_ {  (\Delta-(z+i\eps))\inv} -K_ {  (\Delta-(z-i\eps))\inv})( L_\g \tilde \varphi \otimes \tilde \psi)\right|\\
&\leq \sup_{0\leq z\leq b} b \sum_\g\int_{G/K}\int_{G/K} \left |( K_ {  (\Delta-(z+i\eps))\inv}(x,y) -K_ {  (\Delta-(z-i\eps))\inv}(x,y))( \tilde \varphi(\g\inv x)\tilde \psi(y))\right|\:dx\:dy\\
&\leq \sup_{0\leq z\leq b} 2b \sum_\g\int_{G/K}\int_{G/K} \left | K_ {  (\Delta-z)\inv}(\g x,y)  \tilde \varphi( x)\tilde \psi(y)\right|\:dx\:dy\\
&\leq  2b \sum_\g\int_{G/K}\int_{G/K} \left | K_ {  (\Delta-b)\inv}(\g x,y)  \tilde \varphi( x)\tilde \psi(y)\right|\:dx\:dy\\
	\end{align*}
	Since the Green function only depends on $\mu_+(y\inv \g x)$ this can be estimated by a constant times 
	\[
		\sup_{x,y\in C}\sum_\g |K_{(\Delta-b)\inv} (e^{\mu_+(y\inv \g x)})|
	\]
	where $C\subseteq G$ is compact. 
	Now we use Theorem~\ref{thm:globalboundsGreen} to see that this is bounded for any $\nu>0$  by \begin{equation}
		\label{eq:priortotriangle}
		C_\nu \sup_{x,y\in C} \sum_\g e^{-(\sqrt{\|\rho\|^2-b}+\|\rho\| -\nu)\|\mu_+(y\inv \g x)\|}.
	\end{equation}
	By the triangle inequality 
\[
	\|\mu_+(\gamma)\| \leq \|\mu_+(y)\| + \|\mu_+( x )\|+	 \|\mu_+(x\inv \g y)\|  
\]
so that \eqref{eq:priortotriangle} is bounded by
\[
		C_\nu \sup_{x,y\in C}e^{ (\sqrt{\|\rho\|^2-b}+\|\rho\| -\nu)(\|\mu_+(y)\|+\|\mu_+(x)\|)}\sum_\g e^{-(\sqrt{\|\rho\|^2-b}+\|\rho\| -\nu)\|\mu_+( \g )\|}.
	\]
	This is finite (for small $\nu$) if $\sqrt{\|\rho\|^2 -b} +\|\rho\| > \delta_{\G}$, 
	i.e. $b< \|\rho\|^2-(\max\{0,\delta_D-\|\rho\|\})^2$.

	This estimate allows us to use Lebesgue's dominated convergence theorem to 
	conclude the lemma.
\end{proof}

	Note that in Lemma~\ref{la:nongroup} $D$ is not assumed to be a group.
	We will use this general statement in the proof of 
	Proposition~\ref{prop:spectrumlaplaceonefactor}.

\subsection{Product of rank one spaces}

Let $X=X_1\times X_2 = (G_1\times G_2)/(K_1\times K_2)$ be the product of two rank one symmetric spaces and $\G\subseteq G_1\times G_2$ discrete and torsion-free.
In order to determine $\widetilde \sigma(\Gamma\backslash G/K)$ in this case we bound the spectrum of the Laplacian acting on one factor and then use Proposition~\ref{prop:equivalencespectra}.
\begin{theorem}
	\label{prop:spectrumlaplaceonefactor}
	Let $\Delta_1$ be the Laplacian $\Delta\otimes id$ on $L^2(X_1\times X_2)=L^2(X_1)\otimes L^2(X_2)$ acting  on the first factor.
	Let
	\[
		\delta_1=\sup_{R>0} \inf\left\{s\in\R \colon \sum_{\g\in\G, \|\mu_+(\g_2)\|\leq R} e^{-s \|\mu_+(\g_1)\|} <\infty\right\}.
	\]
	Then
	\[
		\widetilde \sigma(_\G\Delta_1)=\{\lambda\in\mf a_\C^\ast/W\mid \chi_\lambda(\Delta_1)\in\sigma(_\G \Delta_1)\} \subseteq \{\lambda\in\mf a_\C^\ast/W\mid \|\Re(\lambda_1)\|\leq \max (0,\delta_1-\|\rho_1\|)\}.
	\]
\end{theorem}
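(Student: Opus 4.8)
The plan is to mimic the rank-one argument of Proposition~\ref{prop:spectrumrankone}, but now applied to the operator ${}_\G\Delta_1$ on $L^2(\G\backslash X)$, using the \emph{full} resolvent kernel of $(\Delta_1-z)\inv$ on the product space $X=X_1\times X_2$ rather than on a single rank-one space. By Proposition~\ref{prop:StoneFormula} it suffices to show that for every $b<\|\rho_1\|^2 - (\max\{0,\delta_1-\|\rho_1\|\})^2$ the Stone integral
\[
	\int_0^b \Big( ({}_\G\Delta_1-(z+i\eps))\inv - ({}_\G\Delta_1-(z-i\eps))\inv\Big)\,dz \to 0
\]
strongly as $\eps\to 0$; together with Proposition~\ref{prop:equivalencespectra}~\ref{itm:alloperators} and the identity $\chi_\lambda(\Delta_1) = -\langle\lambda_1,\lambda_1\rangle + \|\rho_1\|^2$ this gives the claimed containment, since $\chi_\lambda(\Delta_1)\notin\sigma({}_\G\Delta_1)$ whenever $\|\Re(\lambda_1)\| > \max\{0,\delta_1-\|\rho_1\|\}$ and $\Re\lambda_1$ is real (and $\Im\lambda_1$ contributes only a positive quantity, pushing $\chi_\lambda(\Delta_1)$ further to the left, as in the rank-one case).

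First I would observe that $(\Delta_1-z)\inv$ on $L^2(X)=L^2(X_1)\otimes L^2(X_2)$ is, by separation of variables, $(\Delta-z)\inv\otimes \mathrm{id}_{L^2(X_2)}$, so its Schwartz kernel on $X\times X$ is $K_{(\Delta-z)\inv}(x_1,y_1)\,\delta_{x_2}(y_2)$ where $K_{(\Delta-z)\inv}$ is precisely the rank-one Green function on $X_1$ whose global bounds are recorded in Theorem~\ref{thm:globalboundsGreen} and Remark~\ref{rmk:trivialboundsGreen}. Next, pair the Stone integral against $\varphi\otimes\psi\in C_c^\infty(\G\backslash X)$, lift to $\tilde\varphi,\tilde\psi\in C_c^\infty(X)$ via the averaging map $\alpha$, and apply Lemma~\ref{la:resolventkernellocally} to ${}_\G\Delta_1$ (which is symmetric, hence ${}_\G\Delta_1 - (z\pm i\eps)$ is invertible). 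One then has to show
\[
	\int_0^b \sum_{\g\in\G}\big( K_{(\Delta_1-(z+i\eps))\inv} - K_{(\Delta_1-(z-i\eps))\inv}\big)(L_\g\tilde\varphi\otimes\tilde\psi)\,dz \to 0.
\]
Because of the $\delta$-factor in the second variable, the $\g$-sum naturally reorganizes: writing $\g=(\g_1,\g_2)$, the kernel only pairs nontrivially when the $X_2$-supports of $L_{\g_2}\tilde\varphi$ and $\tilde\psi$ overlap, which (the supports being compact in $X_2$) restricts $\|\mu_+(\g_2)\|\leq R$ for some $R=R(\tilde\varphi,\tilde\psi)$. For the remaining $\g_1$-part the estimate is exactly the one in Lemma~\ref{la:nongroup}, applied to the multiset $D = \{\g_1 \mid \g=(\g_1,\g_2)\in\G,\ \|\mu_+(\g_2)\|\leq R\}$ sitting inside the rank-one group $G_1$: its critical exponent $\delta_D$ is bounded by $\delta_1$ by definition of $\delta_1$ (the supremum over $R$), and Lemma~\ref{la:nongroup} gives convergence to $0$ for $b < \|\rho_1\|^2 - (\max\{0,\delta_D-\|\rho_1\|\})^2$, which is implied by the range of $b$ we need.

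The main obstacle I anticipate is handling the $\delta$-distribution in the second factor cleanly: the product kernel $K_{(\Delta-z)\inv}(x_1,y_1)\delta_{x_2}(y_2)$ is a genuine distribution on $X\times X$, not a function, so the manipulations ``away from the diagonal'' used in Lemma~\ref{la:nongroup} have to be reinterpreted — the relevant diagonal is now the partial diagonal $\{x_1=y_1\}$ thickened in the $X_2$-variable. Concretely, after integrating out the $X_2$-variable against $\tilde\varphi,\tilde\psi$ one is left with, for each $\g$, an expression of the form $\int_{X_1}\int_{X_1} K_{(\Delta-z)\inv}(\g_1 x_1, y_1)\,\Phi_\g(x_1)\Psi(y_1)\,dx_1\,dy_1$ where $\Phi_\g(x_1)=\int_{X_2}\tilde\varphi(\g_1\inv\!\cdot\, x_1,\g_2\inv\!\cdot\, x_2)\overline{\tilde\psi(y_1,x_2)}\,dx_2$ — but one must be a little careful that this ``contraction'' is legitimate and that the supports really do force $\|\mu_+(\g_2)\|\le R$; this is where I would spend the most care. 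Once that reduction is in place, the remaining analysis is verbatim Lemma~\ref{la:nongroup} (triangle inequality on $\mu_+$, Theorem~\ref{thm:globalboundsGreen}, dominated convergence), and one also needs the trivial observation — used in Proposition~\ref{prop:spectrumrankone} already — that replacing $z$ by the real part $\Re z = z$ and then by $b$ only increases the positive kernel, so the whole bound is uniform in $\eps$.
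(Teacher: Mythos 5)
Your plan is essentially the paper's own proof: Stone's formula, the kernel $K_{(\Delta_1-z)\inv}=K_{(\Delta-z)\inv}\otimes\delta_{x_2=y_2}$, the reduction of the $\Gamma$-sum to $\Gamma_R=\{\gamma\in\Gamma\colon\|\mu_+(\gamma_2)\|\le R\}$ via the compact $X_2$-supports, and then Lemma~\ref{la:nongroup} applied to the multiset $\on{pr}_1(\Gamma_R)$. Two remarks. First, on the ``main obstacle'' you flag: the paper sidesteps the ill-formed contraction $\Phi_\gamma$ you write down (it depends on $y_1$ as well as $x_1$) by assuming, as you may do without loss of generality by linearity and density, that the lifts are tensor products $\tilde\varphi=\tilde\varphi_1\otimes\tilde\varphi_2$ and $\tilde\psi=\tilde\psi_1\otimes\tilde\psi_2$; then the $X_2$-pairing factors out as a scalar $\int_{X_2}\tilde\varphi_2(\gamma_2\inv x)\tilde\psi_2(x)\,dx$ that vanishes for $\|\mu_+(\gamma_2)\|>R(\tilde\varphi_2,\tilde\psi_2)$, and the $X_1$-pairing is exactly the quantity in Lemma~\ref{la:nongroup}. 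You should make this reduction explicit rather than gesture at it. Second, your parenthetical justification for the translation from $\sigma({}_\G\Delta_1)\subseteq[\|\rho_1\|^2-(\max\{0,\delta_1-\|\rho_1\|\})^2,\infty[$ to the statement about $\Re\lambda_1$ is backwards: with $\lambda_1=a+ib$ one has $\chi_\lambda(\Delta_1)=\|\rho_1\|^2-\|a\|^2+\|b\|^2-2i\langle a,b\rangle$, so the imaginary part of $\lambda_1$ pushes the real part of $\chi_\lambda(\Delta_1)$ to the \emph{right}, not the left. The conclusion is still correct, but for a different reason: $\sigma({}_\G\Delta_1)\subseteq\R$ forces $\langle a,b\rangle=0$, and since $\mathfrak a_1$ is one-dimensional this gives $a=0$ or $b=0$; in the first case $\|\Re\lambda_1\|=0$ and there is nothing to prove, and in the second case $\lambda_1$ is real and the bound on $\|a\|$ follows immediately from $\chi_\lambda(\Delta_1)\geq\|\rho_1\|^2-(\max\{0,\delta_1-\|\rho_1\|\})^2$.
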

\begin{proof}
	Since the Schwartz kernel of the identity is the Dirac distribution $\delta_{x_2=y_2}$ on the diagonal in $X_2\times X_2$,
	the Schwartz kernel of $ (\Delta_1-z)\inv$ is
	\[
		K_ { (\Delta_1-z)\inv}((x_1,x_2),(y_1,y_2))= K_ {(\Delta-z)\inv}(x_1,y_1)\delta_{x_2=y_2}(x_2,y_2)
	\]
	for $z\notin [\|\rho_1\|^2,\infty[$.
	Therefore, if $(_\G \Delta_1 -z)$ is invertible the kernel of $
	({}_\G \Delta_1-z)\inv$ is
	\[
		K_ {  ({}_\G\Delta_1-z)\inv}(\G(x_1,x_2),\G(y_1,y_2))=
		\sum _ {\g\in \G} K_ {(\Delta-z)\inv}(\g_1 x_1,y_1)\delta_{x_2=y_2}(\g_2 x_2,y_2)
	\]
	by
	Lemma~\ref{la:resolventkernellocally}. 
	According to Proposition~\ref{prop:StoneFormula}
	we have to determine for which $b<\|\rho_1\|^2$:
	\begin{align*}%
		\int_0^b {( {}_\G \Delta_1-(z+i\eps))\inv} -{( {}_\G \Delta_1-(z-i\eps))\inv}\:dz \to 0
	\end{align*}
	in the strong sense as $\eps\to 0$.
	As in Lemma~\ref{la:resolventkernellocally} denote the Schwartz kernel of $(_\G D -z)\inv$ by $K_{({}_\G D -z)\inv}$. 
	Then we need to see for which $b<\|\rho_1\|^2$
	\begin{align}\label{eq:intprodstone}
		\int_0^b (K_ {( {}_\G \Delta_1-(z+i\eps))\inv} -K_ {( {}_\G \Delta_1-(z-i\eps))\inv})( \varphi \otimes \psi) dz \to 0
	\end{align}
	as $\eps\to 0$ for every $\varphi,\psi\in C_c^\infty(\G\backslash G/K)$.
	Let $\tilde \varphi$ (resp. $\tilde \psi$) be a preimage of $\varphi$ (resp. $\psi$) under the map $\alpha$.
	Then the expression in  \eqref{eq:intprodstone} equals 
	\[
		\int_{0}^{b} {\sum_{\g\in\G}(K_ {  (\Delta_1-(z+i\eps))\inv} -K_ {  (\Delta_1-(z-i\eps))\inv})( L_\g \tilde \varphi \otimes \tilde \psi)} \: dz
	\]	
	by Lemma~\ref{la:resolventkernellocally}.
	Without loss of generality we can assume that $\tilde \varphi = \tilde \varphi_1 \otimes \tilde \varphi_2\in C_c^\infty(X_1)\otimes C_c^\infty(X_2)\subseteq C_c^\infty(X_1\times X_2)$ and in the same way for $\tilde \psi$.
	Then \eqref{eq:intprodstone} reduces to
	\[
		\int_{0}^{b} {\sum_{\g\in\G} \left( K_ {  (\Delta-(z+i\eps))\inv} -K_ {  (\Delta-(z-i\eps))\inv})( L_{\g_1} \tilde \varphi_1\otimes \tilde \psi_1) \right) (\delta _{x_2=y_2}(L_{\g_2}\tilde \varphi_2 \otimes \tilde \psi_2))} \: dz
	\]
	The latter part of the integrand is $\int_{X_2}^{} {\tilde \varphi_2(\g_2\inv x) \tilde \psi_2(x)} \: dx$
	which vanishes if $\g_2$ is large depending on $\tilde\varphi_2$ and $\tilde \psi_2$.
	More precisely, this is the case if
	\[
		\|\mu_+(\gamma_2)\|>2 \max_{x\in \on{supp} \tilde \varphi_2} d(x,eK_2) + \max_{\substack {x\in \on{supp} \tilde\varphi_2\\y\in \on{supp}\tilde \psi_2}}d(x,y) \eqqcolon R.
	\]
	Indeed, $d(x,\gamma_2\inv x)\geq d(\gamma_2 K_2,eK_2)-2 d(x,eK_2)> \max_{\substack {x\in \on{supp} \tilde\varphi_2\\y\in \on{supp}\tilde \psi_2}}d(x,y)$ so that $x\in \on{supp}\tilde \psi_2$ excludes $\gamma_2\inv x\in \on{supp} \tilde \varphi_2$.

	Let $\G_R\coloneqq \{\g\in \G\mid \|\mu_+(\g_2)\|\leq R\}$. 
	It follows that \eqref{eq:intprodstone} is bounded by a constant times
	\[
		\int_{0}^{b} {\sum_{\g\in\G_R}(K_ {  (\Delta-(z+i\eps))\inv} -K_ {  (\Delta-(z-i\eps))\inv})}( L_{\g_1} \tilde \varphi_1\otimes \tilde \psi_1) \: dz
	\]
	Now Lemma~\ref{la:nongroup} yields that this vanishes as $\eps\to 0$ as long as $b < \|\rho_1\|^2 - (\max \{0, \delta_{\on{pr}_1(\Gamma_R)}-\|\rho_1\|\})^2$ where $\on{pr}_1(\Gamma_R)$ is the multiset of $\gamma_1\in G$ with multiplicity $\#\{(\gamma_1',\gamma_2')\in \Gamma_R\mid \gamma_1=\gamma_1'\}$.
	In order to get \eqref{eq:intprodstone} for every $\varphi,\psi$ the above condition on $b$ has to hold for every $R>0$,
	i.e. 
$b < \|\rho_1\|^2 - (\max \{0, \delta_1-\|\rho_1\|\})^2$.
	We infer that $$\sigma(_\G \Delta_1)\subseteq\begin{cases} [\|\rho_1\|^2,\infty[& \colon\delta_1\leq\|\rho_1\|\\
		[\|\rho_1\|^2 - (\delta_1-\|\rho_1\|)^2,\infty[&\colon\delta_1\geq \|\rho_1\|.
	\end{cases}
	$$
	Reformulating this statement in terms of $\widetilde\sigma$ we obtain the stated result.
	\end{proof}

	Obviously, Theorem~\ref{prop:spectrumlaplaceonefactor} is also true if we consider the Laplacian on the second factor with the critical exponent
\[
		\delta_2=\sup_{R>0} \inf\left\{s\in\R \colon \sum_{\g\in\G, \|\mu_+(\g_1)\|\leq R} e^{-s \|\mu_+(\g_2)\|} <\infty\right\}.
	\]
	Using this, Proposition~\ref{prop:equivalencespectra}, and Proposition~\ref{prop:temperednessfromspectrum} we obtain the following corollary giving us temperedness of $L^2(\G\backslash G)$ in dependence of $\delta_1$ and $\delta_2$.
	
	\begin{korollar}
		\label{cor:temperednessProduct}
		If $\delta_1\leq \|\rho_1\|$ and $\delta_2\leq\|\rho_2\|$, then $L^2(\G\backslash G)$ is tempered.
	\end{korollar}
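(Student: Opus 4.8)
The plan is to obtain the corollary purely by combining Theorem~\ref{prop:spectrumlaplaceonefactor}, Proposition~\ref{prop:equivalencespectra} and Proposition~\ref{prop:temperednessfromspectrum}; there is no new analytic input needed. First I would record that in the product situation the algebra is $\mathbb D(G/K)=\mathbb D(G_1/K_1)\otimes\mathbb D(G_2/K_2)=\C[\Delta_1,\Delta_2]$, since each $G_i$ has rank one, where $\Delta_1=\Delta\otimes\id$ and $\Delta_2=\id\otimes\Delta$ act on the two factors. To be able to invoke Proposition~\ref{prop:equivalencespectra} I would take as generating set $D_i\coloneqq\Delta_i-\|\rho_i\|^2\,\id$ for $i=1,2$: these are symmetric, they generate $\mathbb D(G/K)$, and their Harish-Chandra polynomials $\HC(D_i)(\lambda)=-\langle\lambda_i,\lambda_i\rangle$ are homogeneous, so the hypotheses of that proposition are met (separation of points on $\mf a_\C^\ast/W$ is immediate in the rank one product case).

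Next, Proposition~\ref{prop:equivalencespectra} identifies $\widetilde\sigma(\G\backslash G/K)$ with $\{\lambda\mid\chi_\lambda(D)\in\sigma(_\G D)\text{ for all }D\in\mathbb D(G/K)\}$. Applying the membership condition to the two particular operators $\Delta_1,\Delta_2\in\mathbb D(G/K)$ gives at once
\[
\widetilde\sigma(\G\backslash G/K)\ \subseteq\ \widetilde\sigma(_\G\Delta_1)\cap\widetilde\sigma(_\G\Delta_2).
\]
Now I would feed in Theorem~\ref{prop:spectrumlaplaceonefactor} for the first factor together with its analogue for the second factor (stated just after the theorem), which together bound the two sets on the right and give
\[
\widetilde\sigma(\G\backslash G/K)\ \subseteq\ \{\lambda\in\mf a_\C^\ast/W\mid\|\Re(\lambda_i)\|\leq\max(0,\delta_i-\|\rho_i\|)\text{ for }i=1,2\}.
\]

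Finally I would use the hypotheses $\delta_i\leq\|\rho_i\|$: then $\max(0,\delta_i-\|\rho_i\|)=0$ for $i=1,2$, so the last inclusion forces $\Re(\lambda_1)=\Re(\lambda_2)=0$ on $\widetilde\sigma(\G\backslash G/K)$; since $\mf a^\ast=\mf a_1^\ast\oplus\mf a_2^\ast$ this says exactly that $\widetilde\sigma(\G\backslash G/K)\subseteq i\mf a^\ast/W$. The ``in particular'' clause of Proposition~\ref{prop:temperednessfromspectrum} then gives that $L^2(\G\backslash G)$ is tempered.

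I do not expect any real obstacle: all the analytic content sits in Theorem~\ref{prop:spectrumlaplaceonefactor}, and what is left is a short bookkeeping argument. The two points that need care are checking that $\{D_1,D_2\}$ genuinely meets the hypotheses of Proposition~\ref{prop:equivalencespectra} (symmetry and homogeneity of the Harish-Chandra polynomials), and keeping track that $\widetilde\sigma(_\G\Delta_i)$, as defined in Theorem~\ref{prop:spectrumlaplaceonefactor}, is precisely the preimage of $\sigma(_\G\Delta_i)$ under $\lambda\mapsto\chi_\lambda(\Delta_i)$, so that intersecting the two factorwise constraints produces exactly the set cut out by $\|\Re(\lambda_1)\|$ and $\|\Re(\lambda_2)\|$ simultaneously.
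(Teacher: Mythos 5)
Your proof is correct and follows exactly the route the paper indicates: combine Theorem~\ref{prop:spectrumlaplaceonefactor} (and its second-factor analogue), Proposition~\ref{prop:equivalencespectra} to pass from the two factorwise Laplacian spectra to $\widetilde\sigma(\G\backslash G/K)\subseteq i\mf a^\ast/W$, and then Proposition~\ref{prop:temperednessfromspectrum}. The one useful addition you make is the explicit verification that $D_i=\Delta_i-\|\rho_i\|^2\id$ furnishes a generating set meeting the hypotheses of Proposition~\ref{prop:equivalencespectra}, which the paper leaves implicit.
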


	\begin{example}\label{ex:counting}
		\begin{enumerate}
			\item  Let $\G$ be a product $\G_1\times \G_2$ where each $\G_i\leq G_i$ is discrete and torsion-free. Then it is clear that $\delta_i=\delta_{\G_i}$.
				Hence, we obtain the expected results in this product situation.
			\item Let $\G$ be a selfjoining: both projections $\pi_i\colon G_1\times G_2\to G_i$ onto one factor restricted to $\Gamma$ have finite kernel and discrete image.
				Then the set of $\gamma\in \Gamma$ where $\|\mu_+(\pi_i(\gamma))\|\leq R$ is finite. 
				Therefore $\delta_i=-\infty$ and $L^ 2(\G \backslash G)$ is tempered. 
				
			\item Let $\Gamma\leq G_1\times G_2$ be an Anosov subgroup with respect to the minimal parabolic subgroup, i.e.
				$\Gamma$ is a selfjoining such that $\pi_i|_\Gamma$ are convex-cocompact representations.
				In particular, 
				$L^2(\Gamma\backslash G)$ is tempered.
		\end{enumerate}

	\end{example}

	\subsection{Growth indicator function}
	\label{sec:growthindicator}
	In this section we will take a look at the limit cone and the growth indicator function $\psi_\G$ introduced by Quint \cite{Quint} 
	and compare it with $\delta_1$.
		\begin{definition}
		The limit cone $\mc L_\G$ of $\G$ is defined as the asymptotic cone of $\mu_+(\G)$, i.e.
		\begin{align*}
			\mc L_\G=\{\lim t_n\mu_+(\g_n)\mid t_n\to 0,\g_n\in \G\}.
		\end{align*}
	\end{definition}
	For $\G$ Zariski dense, $\mc L_\Gamma$ is a convex cone with non-empty interior \cite{Benoist1997}.
		From this definition we obtain the following proposition.
\begin{proposition}
	\label{prop:resultlimitcone}
			Let $\Gamma$ be a torsion-free discrete subgroup of $G=G_1\times G_2$ where $G_i$ are of real rank one.
			If $\mc L_\Gamma\subseteq \mf a_+ \cup \{0\}$, then $L^2(\Gamma\backslash G)$ is tempered.
		\end{proposition}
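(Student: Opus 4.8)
The plan is to deduce Proposition~\ref{prop:resultlimitcone} from Corollary~\ref{cor:temperednessProduct} by showing that the geometric hypothesis $\mc L_\Gamma \subseteq \mf a_+ \cup \{0\}$ forces both $\delta_1 \leq \|\rho_1\|$ and $\delta_2 \leq \|\rho_2\|$; in fact it should force $\delta_i = -\infty$ (or at least $\delta_i \leq 0$). The key observation is that $\delta_1 = \sup_{R>0} \delta_{1,R}$ where $\delta_{1,R}$ is the abscissa of convergence of $\sum_{\g\in\G, \|\mu_+(\g_2)\|\leq R} e^{-s\|\mu_+(\g_1)\|}$, so it suffices to show that for every fixed $R>0$ the set $\Gamma_R = \{\g\in\Gamma \mid \|\mu_+(\g_2)\| \leq R\}$ is \emph{finite}; then each $\delta_{1,R} = -\infty$ and hence $\delta_1 = -\infty \leq \|\rho_1\|$, and symmetrically $\delta_2 = -\infty$. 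Once both inequalities are in hand, Corollary~\ref{cor:temperednessProduct} gives temperedness immediately.

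\textbf{Finiteness of $\Gamma_R$.} First I would argue that $\Gamma_R$ is finite for each $R$. Suppose not; then there is a sequence of distinct $\g_n \in \Gamma$ with $\|\mu_+((\g_n)_2)\| \leq R$ bounded. Since $\Gamma$ is discrete and $G$ is (locally compact) second countable, discreteness forces $\|\mu_+(\g_n)\| \to \infty$ (only finitely many group elements lie in any fixed Cartan-compact set $K\exp(\overline{\mf a_+}\cap B_S)K$). Because the second components stay bounded, we must have $\|\mu_+((\g_n)_1)\| \to \infty$. Passing to a subsequence, normalize: $t_n \coloneqq \|\mu_+(\g_n)\|\inv \to 0$, and by compactness of the unit sphere in $\mf a = \mf a_1 \oplus \mf a_2$ we may assume $t_n \mu_+(\g_n) \to v$ for some unit vector $v$. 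Then $v \in \mc L_\Gamma$ by definition. But the second component $t_n \mu_+((\g_n)_2) \to 0$ since $\|\mu_+((\g_n)_2)\| \leq R$ is bounded while $t_n\to 0$; hence $v$ lies entirely in $\mf a_1 \times \{0\}$, i.e.\ on the boundary face $\{H_2 = 0\}$ of the closed chamber $\overline{\mf a_+} = \overline{(\mf a_1)_+} \times \overline{(\mf a_2)_+}$. This face is disjoint from $\mf a_+ \cup \{0\}$ (it is a nonzero vector with vanishing $\mf a_2$-component, so it is not in the open chamber, and it is not $0$), contradicting $\mc L_\Gamma \subseteq \mf a_+ \cup \{0\}$. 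Therefore $\Gamma_R$ is finite.

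\textbf{Conclusion.} For finite $\Gamma_R$ the defining sum for $\delta_{1,R}$ is a finite sum of exponentials, which converges for every real $s$, so its abscissa of convergence is $-\infty$. Taking the supremum over $R$ gives $\delta_1 = -\infty \leq \|\rho_1\|$, and the same argument with the roles of the two factors exchanged gives $\delta_2 = -\infty \leq \|\rho_2\|$. Corollary~\ref{cor:temperednessProduct} then yields that $L^2(\Gamma\backslash G)$ is tempered, which is the assertion. \textbf{The main obstacle} I anticipate is the careful handling of the normalization/subsequence argument and making precise that a limit-cone vector arising from a sequence whose $\mf a_2$-components are bounded must lie in the wall $\{H_2=0\}$ which is excluded by hypothesis; everything else is bookkeeping. (One should also double-check the rank-one conventions: each $\overline{\mf a_i{}_+}$ is a half-line, so $\{H_2 = 0\} \cap (\mf a_+ \cup \{0\}) = \{0\}$, and a nonzero $v$ on this wall is genuinely excluded.) I would also remark that this recovers and slightly strengthens Example~\ref{ex:counting}(iii), since Anosov subgroups with respect to the minimal parabolic have limit cone contained in the open chamber union the origin.
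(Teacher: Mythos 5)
Your proof is correct and follows essentially the same route as the paper: reduce to Corollary~\ref{cor:temperednessProduct} by showing each $\Gamma_R$ is finite, arguing by contradiction that an infinite sequence with bounded second Cartan component would, after normalization, produce a nonzero limit-cone vector on the wall $\mf a_1\times\{0\}$, contradicting $\mc L_\Gamma\subseteq \mf a_+\cup\{0\}$. The only cosmetic difference is that you normalize by $\|\mu_+(\gamma_n)\|\inv$ and extract a subsequence, whereas the paper normalizes by $\|\mu_+(\gamma_{n,1})\|\inv$ directly; both yield the same contradiction.
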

		\begin{proof}
			In view of Corollary~\ref{cor:temperednessProduct} it is sufficient to show that $\delta_i=-\infty$. 
			Suppose there are infinitely many  $\gamma_n\in \Gamma$ pairwise distinct such that $\|\mu_+(\gamma_{n,2})\| \leq R$.
			By discreteness $\|\mu_+(\gamma_{n,1})\|\to \infty$. 
			Hence we can choose $t_n\coloneqq 1/\|\mu_+(\gamma_{n,1})\|$.
			Then $t_n \mu_+(\gamma_{n})$ converges to $(H_1,0)$ where $H_1\in \mf a_{1,+}$ is normalized contradicting $\mc L_\Gamma \subseteq \mf a_+\cup \{0\}$.
			Therefore, there are only finitely many $\gamma\in \Gamma$ with bounded second component and hence $\delta_1=-\infty$.
			The same argument works for $\delta_2$.
		\end{proof}
	For $\G\leq G$ discrete and Zariski dense let $\psi_\G\colon\mf a\to \R\cup\{-\infty\}$ be defined by 
	\begin{align*}
		\psi_\G(H)\coloneqq \|H\| \inf_{H\in\mc C} \inf\{s\in\R\mid \sum_{\g\in \G,\mu_+(\g)\in\mc C} e^{-s\|\mu_+(\g)\|} <\infty\}
	\end{align*}
	where the infimum runs over all open cones $\mc C$ containing $H$ and $\|\cdot\|$ is a Weyl group invariant norm on $\mf a$. 
	For $H=0$ let $\psi_\G(0)=0$. 
	Note that $\psi_\G$ is positive homogeneous of degree $1$.
	In general we have the upper bound $\psi_\G\leq 2\rho$.
By \cite{Quint} we know that $\psi_\G\geq 0$ on $\mc L_\Gamma$, $\psi_\G>0$ on the interior of $\mc L_\Gamma$ and $\psi_\G=-\infty$ outside $\mc L_\Gamma$.
	Moreover, $\psi_\G$ is concave and upper-semicontinuous.

		Let us compare $\delta_1$ to $\psi_\G$ in the situation $G=G_1\times G_2$ where $G_i$ is of real rank one.
		Let $H_i\in \mf a_{i,+}$ of norm $1$ and consider the maximum norm on  $\mf a =\mf a_1\times \mf a_2$.
		In this situation it is clear that $\delta_1\leq \psi_\G(H_1,0)$ since every cone $\mc C$ containing $(H_1,0)$ contains the strip $\mf a_{1,+}\times\{H\in\mf a_{2,+}\mid \|H\|\leq R\}$ outside a large enough compact set.

		Note that if $\psi_\Gamma \leq \rho$ then by the above comparison this condition implies $\delta_i\leq \|\rho_i\|$ which is enough to obtain:
\begin{korollar}
Let $X=X_1\times X_2 = (G_1\times G_2)/(K_1\times K_2)$ be the product of two rank one symmetric spaces and $\G\leq G_1\times G_2$ discrete and torsion-free. If $\psi_\Gamma\leq \rho$ then $L^2(\Gamma\backslash G)$ is tempered.
\end{korollar}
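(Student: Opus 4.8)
The plan is to reduce everything to Corollary~\ref{cor:temperednessProduct}: it suffices to show that the hypothesis $\psi_\G\leq\rho$ forces $\delta_1\leq\|\rho_1\|$ and $\delta_2\leq\|\rho_2\|$. Here we tacitly assume $\G$ to be Zariski dense, since otherwise the growth indicator $\psi_\G$ is not even defined; the reduction itself uses nothing beyond discreteness and torsion-freeness of $\G$.

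First I would record the comparison already isolated just above, namely $\delta_1\leq\psi_\G(H_1,0)$, where $H_1\in\mf a_{1,+}$ is the (unique) unit vector. The mechanism is that, with respect to the maximum norm on $\mf a=\mf a_1\times\mf a_2$, one has $\|\mu_+(\g)\|=\|\mu_+(\g_1)\|$ whenever $\|\mu_+(\g_2)\|$ stays bounded and $\|\mu_+(\g_1)\|$ is large, while any open cone $\mc C\ni(H_1,0)$ contains, outside a sufficiently large ball, the whole strip $\mf a_{1,+}\times\{H\in\mf a_{2,+}\mid\|H\|\leq R\}$ for each fixed $R$. Hence the sum over $\{\g\mid\mu_+(\g)\in\mc C\}$ dominates, up to finitely many terms, the sum defining the $R$-truncated exponent in $\delta_1$; taking first $\sup_R$ and then $\inf_{\mc C}$ yields $\delta_1\leq\psi_\G(H_1,0)$.

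Next I would evaluate the hypothesis in the direction $(H_1,0)$. Since $G_1$ has rank one, $\mf a_{1,+}$ is a ray and the half-sum of positive roots $\rho_1$ lies on it, so $H_1=\rho_1/\|\rho_1\|$. Writing $\rho=(\rho_1,\rho_2)$ and using the identification $\mf a\leftrightarrow\mf a^\ast$ through the Killing form, the hypothesis gives
\[
\delta_1\leq\psi_\G(H_1,0)\leq\langle\rho,(H_1,0)\rangle=\langle\rho_1,H_1\rangle=\|\rho_1\|.
\]
Running the identical argument with the two factors exchanged produces $\delta_2\leq\|\rho_2\|$, and Corollary~\ref{cor:temperednessProduct} then shows that $L^2(\G\backslash G)$ is tempered.

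Once the comparison $\delta_1\leq\psi_\G(H_1,0)$ is in hand the rest is immediate, so I do not expect a genuine obstacle; the only step calling for a little care is exactly that comparison, where one must check that a conical neighbourhood of the boundary direction $(H_1,0)$ really captures all group elements whose second Cartan component is bounded — the cone-versus-strip observation recorded above.
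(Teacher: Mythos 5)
Your proposal is correct and follows essentially the same route as the paper: the comparison $\delta_i\leq\psi_\G(H_i,0)$ via the cone-versus-strip observation (with the maximum norm), the evaluation $\psi_\G(H_1,0)\leq\rho((H_1,0))=\|\rho_1\|$ using that $\mf a_{1,+}$ is a ray, and then Corollary~\ref{cor:temperednessProduct}. Your remark about tacitly assuming Zariski density (so that $\psi_\G$ is defined) is a fair observation and does not affect the argument.
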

Note that this is precisely the result of \cite{EdwardsOh22} without the assumption that $\Gamma$ is the image of an Anosov representation with respect to a minimal parabolic subgroup.

	\clearpage
	\appendix

	\bibliographystyle{amsalpha}
	\bibliography{literatur}

	\bigskip
	\bigskip

	\end{document}